\documentclass[10pt]{amsart}
\usepackage{amsfonts}
\usepackage{amssymb,amsthm,color,tikz}
\usepackage{amsmath}
\usepackage{comment}

\usepackage{graphicx}
\usepackage{dsfont}
\usetikzlibrary{patterns}
\usepackage[a4paper, centering]{geometry}

\usepackage{tikz}
\usetikzlibrary{calc}

\usepackage[normalem]{ulem}
\usepackage{url}

\newcommand{\ds}{\displaystyle}

\newcommand{\R}{\mathbb{R}}
\newcommand{\N}{\mathbb{N}}

\newcommand{\E}{\mathbb{E}}
\newcommand{\Pa}{\mathbb{P}}

\newtheorem{theorem}{Theorem}
\newtheorem{lemma}[theorem]{Lemma}
\newtheorem{proposition}[theorem]{Proposition}
\newtheorem{corollary}[theorem]{Corollary}

\newtheorem{definition}[theorem]{Definition}
\newtheorem{remark}[theorem]{Remark}

\def\H{{\mathcal H}}
\def\Hd{{\mathcal H}^{d-1}}
\def\PP{{\mathcal P}}

\def \d {\mathrm{d}}

\newcommand{\EEE}{\color{black}}

\newcommand{\Om}{\Omega}
\newcommand{\lb}{\lambda}

\newcommand{\sm}{\setminus}

\newcommand{\sq}{\subseteq}
\newcommand{\ov}{\overline}
\newcommand{\vphi}{\varphi}
\newcommand{\vps}{\varepsilon}
\newcommand{\eps}{\varepsilon}

\newcommand{\om}{\omega}
\newcommand{\nif}{{n\to +\infty}}

\newcommand{\bp}{\begin{proof}}
\newcommand{\ep}{\end{proof}}

\newtheorem{prop}[theorem]{Proposition}

\begin{document}
\title[Shape optimisation of the maximum gradient of the torsion function]{
Shape optimisation of the maximum of the gradient of the torsion function on convex sets}
\author[]{Michiel van den Berg, Dorin Bucur, Micka\"{e}l Nahon}

\address[Michiel van den Berg]{
School of Mathematics, University of Bristol\\
Fry Building, Woodland Road\\
Bristol BS8 1UG, United Kingdom
}
\email{mamvdb@bristol.ac.uk}

\address[Dorin Bucur]
{Laboratoire de Math\'ematiques, Universit\'e Savoie Mont Blanc \\
UMR CNRS  5127\\
Campus Scientifique,
73376 Le-Bourget-Du-Lac, France
}
\email{dorin.bucur@univ-savoie.fr}

\address[Micka\"{e}l Nahon]
{
Universit\'e Grenobles Alpes, CNRS\\
Laboratoire Jean Kunzmann\\
38000 Gr\'enoble, France.
}
\email{mickael.nahon@univ-grenoble-alpes.fr}
\date{23 October 2026}
\maketitle

\begin{abstract}
We analyse the boundary behaviour of the gradient of the torsion function $w_\Om$ in convex domains $\Om\subset\mathbb{R}^d$. We prove that its $L^\infty$-norm is attained at a boundary point and establish the stability of this norm under general (convex) perturbations. For problems of the form
 $$\sup\{\|\nabla w_{\Om}\|_\infty: \Om \subset \R^d,\textup{open, bounded and convex}, G(\Om)=m\},$$ where  $G$ is either volume or perimeter,  we show the existence of maximisers which are $C^1$-smooth, rotationally symmetric about an  axis and contain a $(d-1)$-dimensional ball  (of  estimated size) in their boundary. Other constraints, whether geometric (such as diameter or circumradius) or variational (such as torsion or first Dirichlet eigenvalue), are also briefly discussed, and some of their characteristic behaviours are highlighted. \end{abstract}
\vskip 1truecm \noindent   {Mathematics Subject
Classification ({2020})}: { 28A75, 35B50, 35K05, 60J65}.\\ \textbf{Keywords}: Torsion function, gradient, convexity.

 \section{Introduction}

Let $\Om \subset \R^d$ be a bounded, open, convex set. The torsion function associated to $\Om$, denoted by $w_\Om$, is the  (weak) solution of
\begin{equation}\label{da01}
\begin{cases}
-\Delta w_\Om = 1 & \text{in } \Omega,\\[2mm]
w_\Om = 0 & \text{on } \partial\Omega.
\end{cases}
\end{equation}
While   $w_\Om \in C^\infty (\Om)\cap C^0(\ov \Om)$, $\nabla w_\Om$ may  not extend continuously to the boundary. The  supremum  of its length, denoted $\|\nabla w_\Om \|_\infty$,  is finite but  not attained in the interior of $\Om$.

 In Saint-Venant's theory of torsion, the torsional rigidity $\int_\Om w_\Om$ of a two-dimen\-sional domain $\Om$ quantifies the resistance of a cylindrical rod with cross section
$\Om$  to twisting under an applied torque. Of particular interest are the {\it fail} or {\it maximal stress points}, namely the points where the maximum of the gradient of the torsion function is attained and where the elastic material would change its behaviour towards plasticity, under a critical  torque value. The location of these points on the boundary of a given domain led to a  celebrated conjecture by Saint-Venant in his treaty from 1856 (see \cite{SaintVenant1856} and  \cite{Sw89}). This conjecture received a lot attention until Sweers disproved it in 1989  \cite{Sw89} (see as well \cite{Ra90}). To date there are no analytical results identifying the location of the fail points in general domains; most results are numerical.

If the convex domain has a smooth boundary, then the maximum of the gradient of the torsion function is attained at a point of the boundary. When the boundary is not smooth, however, the gradient of the torsion function need not extend continuously to the boundary. Thus, establishing that the maximum of the gradient is actually attained at a boundary point requires some care. In particular, one must first verify that the gradient is well defined at every boundary point before addressing the existence question of fail  points on a non-smooth boundary.

A series of recent papers \cite{LY, LXYY26, HS} deal with the analysis of the fail points and on the maximum stress. The recent landmark paper \cite{Burd} of Burdzy, Ftouhi and Mariano was a main motivation for the current paper. Their paper deals with the shape optimisation problem of two-dimensional convex domains for the maximum stress $\|\nabla w_\Om\|_\infty$ under an area or a perimeter constraint on $\Om$. They establish, for both constraints, the existence of optimal domains, the fact that they have a $C^1$-smooth boundary and the boundary should contain a segment of strictly positive length, where the fail point is located (see also the paper by Huang \cite{Hu25}). Their analysis is carried out within a probabilistic framework, based on the interpretation of $w_
\Om (x)$ as the expected exit time from $\Om$ of a Brownian motion starting at $x$.

Our paper aims to complement the results of \cite{Burd}, and to provide a new perspective on the problem. The main results are Theorems  \ref{th_existence_failpoint}, \ref{th_stability_maxstress},  \ref{th_inward_max}  and \ref{d20} below. Theorem \ref{th_existence_failpoint} establishes the existence of fail points for every open, bounded, convex domain in arbitrary dimension, independently of the regularity of its boundary. A key technical step is to prove that $w_\Om$ is differentiable up to the boundary. We provide a short, self-contained proof of this fact, while referring to \cite{LW06} for a  general treatment of the issue. Boundary differentiability, in turn, yields a natural extension of $\nabla w_\Om$ to $\Om\cup\partial\Om$. Moreover, the modulus of the extended gradient, $|\nabla w_\Om|$, is upper semicontinuous on $\Om\cup\partial\Om$, and the maximum stress $\Vert\nabla w_\Om\Vert_\infty$ is attained on $\partial\Om$.

For any $x\in\partial\Om$ we denote by  $\nu_x$  an outer normal vector of $\Om$ at $x$. If $\nu_x$ is not unique (e.g. the point $x$ is angular) then, in the statement below, $\nu_x$ is chosen  such that for $t>0$ small, $x-t \nu_x\in \Om$. 
\begin{theorem}[The gradient of the torsion function at boundary points]\label{th_existence_failpoint}
Let $\Om \subset \R^{d }$ be a bounded, open and convex set.   Then the following assertions hold true.
\begin{enumerate}
\item [1.] For any $\ov{x}\in\partial\Om$, there exists a vector of $\R^d$, denoted by $\nabla w_\Om(\ov{x})$, such that
\begin{equation}\label{eq_differentiable_bord}
w_\Om(x)=\left(\nabla w_\Om(\ov{x}),x-\ov{x} \right)+o_{x(\in\Om)\to \ov{x}}\left(|x-\ov{x}|\right),
\end{equation}
and $w_\Om$ restricted to $\ov{\Om}$ is differentiable.
In particular, $\nabla w_\Om(\ov{x})=0$ if $\ov{x}$ is an angular point.
\item[2.] For any $\ov{x}\in\partial\Om$, we have.
$$|\nabla w_\Om(\ov{x})|=\lim_{t\to 0^+}-\frac{\partial w_\Om}{\partial \nu_{\ov{x}}}(\ov{x}-t\nu_{\ov{x}})=\limsup_{x(\in\Om)\to \ov{x}}|\nabla w_\Om(x)|.$$
\item [3.]   The mapping $x(\in \partial \Om) \to |\nabla w_\Om (x)|$ is upper semicontinuous.
\item [4.]  $\sup_{x\in\Om}|\nabla w_\Om(x)|=\sup_{x\in\ov{\Om}}|\nabla w_\Om(x)|$, and the supremum is attained at some point of $\partial\Om$.
\end{enumerate}
\end{theorem}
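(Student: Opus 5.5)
We will rely on two comparison principles that convexity makes available: barriers from supporting half-spaces (from above) and from inscribed balls (from below). Fix $\ov x\in\partial\Om$ and a supporting hyperplane with outer normal $\nu$. Since $\Om\subset H:=\{(x-\ov x,\nu)<0\}$ and $\Om$ is bounded, say $\Om\subset\{-D<(x-\ov x,\nu)<0\}$, comparison with the one-dimensional torsion function of the slab gives $0\le w_\Om(x)\le \frac12 |(x-\ov x,\nu)|(D-|(x-\ov x,\nu)|)$. Hence $w_\Om(x)\le \frac D2\,\mathrm{dist}(x,\partial H)$, and $\|\nabla w_\Om\|_\infty\le C(D)$ by standard interior estimates combined with this linear boundary decay.

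\textbf{Step 1 (differentiability at $\ov x$).} Define $a(\ov x):=\limsup_{t\to0^+} w_\Om(\ov x-t\nu)/t$. At an angular point there are two independent supporting normals $\nu_1,\nu_2$, and the slab bounds give $w_\Om(x)\le C\min_i|(x-\ov x,\nu_i)|$. A blow-up argument then shows $w_\Om(x)=o(|x-\ov x|)$: the rescalings $w_\Om(\ov x+r y)/r$ are harmonic-like (their Laplacian is $-r$), bounded by $C|y|$, and supported in a cone strictly smaller than a half-space, and the only such limit that is nonnegative and harmonic with linear growth vanishing on the cone boundary is $0$. This gives $\nabla w_\Om(\ov x)=0$.

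At a point with unique normal $\nu$, the tangent cone of $\Om$ at $\ov x$ is exactly $H$. The rescalings $v_r(y)=w_\Om(\ov x+ry)/r$ are uniformly Lipschitz, since the global gradient bound controls their Lipschitz constant. They converge locally uniformly along subsequences to a nonnegative harmonic function on $H$ vanishing on $\partial H$ with linear growth. By the Liouville theorem for the half-space (a Poisson-kernel/Martin boundary argument), any such limit is $c\,|(y,\nu)|$. The main obstacle is showing that $c$ is independent of the subsequence. My plan is to use monotonicity: for the halfspace $H_\rho$ slightly shifted and the interior ball-touching sets, compare $w_\Om$ with torsion functions of convex sets $\Om\cap B$ and use that $t\mapsto w_\Om(\ov x-t\nu)/t$ is almost monotone. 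Concretely, $w_\Om$ restricted to the segment is concave (the torsion function is $\frac12$-concave, i.e.\ $\sqrt{w_\Om}$ is concave, by Kennington/Korevaar). A cleaner route uses concavity of $\sqrt{w}$ to get directional derivatives, combined with the blow-up uniqueness: $c=\lim_t w_\Om(\ov x-t\nu)/t$ exists because $\lim$ along every subsequence equals $c$ evaluated at $y=-\nu$. This shows that the limit along the normal exists, and uniqueness follows if I can show the limit along the normal converges. I will get this from the observation that $w_\Om(\ov x-t\nu)/t$ bounded combined with Harnack-type boundary comparison (boundary Harnack in Lipschitz domains), giving $w_\Om(x)/w_\Om(\ov x-t\nu)$ Hölder continuous up to the flat direction. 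Then $\nabla w_\Om(\ov x):=-c\nu$, and \eqref{eq_differentiable_bord} follows since locally uniform convergence of all blow-ups to the same $c|(y,\nu)|$ is exactly the little-$o$ statement. Differentiability of $w_\Om|_{\ov\Om}$ is then immediate.

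\textbf{Step 2 (item 2).} Convergence of blow-ups holds in $C^1_{loc}(H)$ by interior elliptic estimates, since $\Delta v_r=-r\to0$. Thus $-\partial_\nu w_\Om(\ov x-t\nu)=-\partial_\nu v_t(-\nu)\to c=|\nabla w_\Om(\ov x)|$, which gives the first equality. For the $\limsup$, the bound $\ge$ is clear. For $\le$, take $x_k\to\ov x$ and let $\ov x_k$ be a nearest boundary point, with $r_k=|x_k-\ov x_k|$. I would rescale at $\ov x_k$ by $r_k$ and use Step 1 in the form $|\nabla w(x_k)|\le |\nabla w(\ov x_k)|+o(1)$, combined with upper semicontinuity from Step 3. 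To avoid circularity, I will instead prove directly that any $C^1_{loc}$ limit of the blow-ups of $w_\Om$ at $x_k$ (scaled by $|x_k-\ov x|$) is a nonnegative harmonic function on a limit convex cone $K\supseteq$ the tangent cone, vanishing on the boundary. Its gradient is bounded by $c$, via the Phragmén–Lindelöf/Liouville argument: $c\,|(y,\nu)|$ dominates it by comparison on the halfspace $H\supseteq K$.

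\textbf{Steps 3–4.} Upper semicontinuity on $\partial\Om$ follows from item 2: a $\limsup$ over interior points, restricted to $\partial\Om$, is upper semicontinuous by a diagonal argument. Finally, $|\nabla w_\Om|$ extended to $\ov\Om$ is upper semicontinuous on the compact set $\ov\Om$ (it is continuous inside and upper semicontinuous at the boundary by item 2), so it attains its maximum. Since $|\nabla w|^2$ is subharmonic in $\Om$ (by Bochner, $\Delta|\nabla w|^2=2|D^2w|^2\ge0$), the maximum principle places a maximiser on $\partial\Om$, and the two suprema coincide by item 2.
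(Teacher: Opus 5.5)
Your proof that $\nabla w_\Om(\ov x)=0$ at angular points is fine. It rests on the fact that positive harmonic functions vanishing on the boundary of a cone strictly inside a half-space are homogeneous of degree $>1$. Your Steps 3--4 and the first equality of item 2 also agree with the paper once items 1--2 are known. However, both core steps at non-angular points have genuine gaps.

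\textbf{Existence of $\nabla w_\Om(\ov x)$.} You correctly reduce item 1 to showing that the constant $c$ in the subsequential blow-up limits $c\,(y,-\nu)^+$ does not depend on the subsequence. None of your arguments proves this.
\begin{itemize}
\item Concavity of $w_\Om$ along the normal segment does not follow from $\frac12$-concavity. Concavity of $f(t)=\sqrt{w_\Om(\ov x-t\nu)}$ with $f(0)=0$ only makes $f(t)/t$ monotone, with limit $+\infty$ under linear growth. It is compatible with $w_\Om(\ov x-t\nu)/t=(f(t)/\sqrt t)^2$ oscillating.
\item Deducing the existence of $\lim_t w_\Om(\ov x-t\nu)/t$ from the blow-ups is circular, since that existence is exactly the independence of $c$.
\item Boundary Harnack does not apply, because $w_\Om$ is not harmonic. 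In any case it only compares two functions vanishing on $\partial\Om$, so it would reduce the problem to the first-order behaviour of some reference function at $\ov x$, which is the same question.
\end{itemize}
What is missing is a genuinely monotone quantity. The paper (Lemma \ref{lem_boundary_differential}) uses $\alpha(r)=\inf\{\beta>0: w_\Om\le\beta x_d^+ \text{ on } B_r\cap\Om\}$, which is monotone by definition. It then proves $w_\Om\ge\alpha x_d-o(|x|)$ by a barrier argument. A deficit $\eps|x^p|$ at scale $r_p$ is moved into a non-tangential cone and propagated with the harmonic functions $h_p,\chi_p$. This gives $w_\Om\le(\alpha(2r_p)-c(\eps))x_d^+$ on $B_{r_p/10}$, contradicting $\alpha(r_p/10)\ge\alpha$.

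Within your own blow-up framework, a Green-identity quantity would also work. Take $\ov x=0$ and $-\nu=e_d$, and set $m(r)=r^{-1}\int_{\partial B_1}w_\Om(r\theta)\,\theta_d\,d\sigma(\theta)$. Then $m'(r)\ge-C_d$, because the Laplacian of the zero extension of $w_\Om$ is $-1_\Om$ plus a nonnegative measure carried by $\partial\Om\subset\{x_d\ge0\}$. Hence $\lim_{r\to0}m(r)$ exists and determines $c$.

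\textbf{The bound $\limsup_{x\to\ov x}|\nabla w_\Om(x)|\le|\nabla w_\Om(\ov x)|$.} Blow-ups converge in $C^1$ only away from the rescaled boundary. They therefore give nothing for $x_k\to\ov x$ tangentially, i.e.\ with $\mathrm{dist}(x_k,\partial\Om)=o(|x_k-\ov x|)$. After rescaling by $|x_k-\ov x|$ (at $\ov x$ or at $x_k$), the relevant point sits at distance $o(1)$ from the boundary, where uniform convergence of the functions does not control gradients. This is exactly the case that matters for upper semicontinuity, since there $\nabla w_\Om(x_k)$ is governed by the boundary near the nearest points $\ov x_k\ne\ov x$.

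Suppose instead you rescale at $\ov x_k$ by $r_k=\mathrm{dist}(x_k,\partial\Om)$. The limit domain is then contained in the tangent half-space, not the other way round. Dominating the limit by $c\,(y,-\nu)^+$ via Phragm\'en--Lindel\"of would require $w_\Om\le(c+o(1))\,\mathrm{dist}(\cdot,T_k)$ on all scales between $r_k$ and $|\ov x_k-\ov x|$, where $T_k$ is the supporting plane at $\ov x_k$. The expansion at $\ov x$ only gives an error $o(|\ov x_k-\ov x|)$, which is unbounded after division by $r_k$. So the argument assumes the upper semicontinuity it is meant to prove. At corners the target is $0$ and a crude half-ball barrier suffices (Lemma \ref{d04}). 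Elsewhere such barriers lose a dimensional constant and cannot give $c+o(1)$.

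The paper's idea is different. The superlevel set $\{w_\Om>w_\Om(y_n)\}$ is convex, and its torsion function is $w_\Om-w_\Om(y_n)$. After a rigid motion, a homothety of ratio tending to $1$, and the maximum principle, this reduces the problem to chord domains $\Om\cap\{x_d>\delta_n\}$ at the point $(0',\delta_n)$. Lemma \ref{lem_conv_gradient_subdomain} then applies Liouville at the intermediate scale $\sqrt{a_n\delta_n}$, where the chord domain looks like a half-space. This bounds that gradient by $w_\Om(0',\xi_n)/\xi_n+o(1)$, which tends to $|\nabla w_\Om(\ov x)|$ by item 1.
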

In Theorem \ref{th_stability_maxstress} below, we prove the stability of the maximum stress in the class of convex sets. The following example may give an idea on the difficulty:  consider the sequence of regular $n$-gons inscribed in the unit circle. This sequence converges to the unit disk in the Hausdorff metric, while the corresponding torsion functions (extended by $0$ outside their domain) converge strongly in $H^1$. On the boundary of the disk, the gradient of the torsion function has constant magnitude. In contrast, at every vertex of the $n$-gon, the corresponding gradient vanishes, causing the trace of the gradient on the boundary of the $n$-gon to exhibit increasingly strong oscillations. The stability result of Theorem \ref{th_stability_maxstress} shows, nevertheless, that the maximum of the norm remains continuous under this convergence.

\begin{theorem}\label{th_stability_maxstress}
Let $(\Om_n)_{n\in\N},  \Om \subset \R^d$ be bounded, open and convex. If $\Om_n ^c \to \Om^ c$ in the Hausdorff metric then
$$\|\nabla w_{\Om_n}\|_\infty \to \|\nabla w_{\Om}\|_\infty. $$
\end{theorem}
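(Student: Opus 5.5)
We prove the two semicontinuity inequalities separately. The tools are Theorem \ref{th_existence_failpoint}, which puts the maximum on the boundary and identifies it with a normal limit, and the uniform convergence $w_{\Om_n}\to w_\Om$, extended by zero. Hausdorff convergence of the complements, for convex sets, implies Hausdorff convergence of the closures. It also gives local uniform convergence $\ov\Om_n\to\ov\Om$, and by convexity (uniform Lipschitz/exterior-cone bounds) uniform convergence of the torsion functions on $\R^d$. Interior elliptic estimates then give $\nabla w_{\Om_n}\to\nabla w_\Om$ locally uniformly in $\Om$.

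\emph{Lower semicontinuity.} Fix $\varepsilon>0$ and pick $x\in\Om$ with $|\nabla w_\Om(x)|>\|\nabla w_\Om\|_\infty-\varepsilon$; this is possible by part 4. For $n$ large, $x$ lies in a compact subset of $\Om_n$, so $|\nabla w_{\Om_n}(x)|\to|\nabla w_\Om(x)|$. Hence $\liminf_n\|\nabla w_{\Om_n}\|_\infty\ge \|\nabla w_\Om\|_\infty-\varepsilon$.

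\emph{Upper semicontinuity.} This is the main obstacle, since the maxima of $|\nabla w_{\Om_n}|$ sit on the moving boundaries, where there is no uniform convergence (cf. the polygon example). I would avoid pointwise gradient control and use a barrier/comparison argument in the inner normal direction. By part 2, $\|\nabla w_{\Om_n}\|_\infty=\lim_{t\to0^+} w_{\Om_n}(\ov x_n-t\nu_n)/t$ for a fail point $\ov x_n\in\partial\Om_n$ with normal $\nu_n$. Let $H_n$ be the supporting half-space of $\Om_n$ at $\ov x_n$. Let $S_n=H_n\cap\{(y-\ov x_n)\cdot\nu_n>-\delta\}$, taking instead a bounded cylinder of height $\delta$ and radius $R\supset\mathrm{diam}$ in the direction $\nu_n$. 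On $\Om_n\cap S_n$ the function $w_{\Om_n}$ is dominated by the solution $v_n$ of $-\Delta v=1$ in $\Om_n\cap S_n$, with boundary values $0$ on $\partial\Om_n$ and $w_{\Om_n}$ on the inner face. Since $\Om_n\cap S_n\subset S_n$, a further comparison on the slab bounds the normal derivative at $\ov x_n$ by the normal derivative of the slab solution. That solution has data $\sup w_{\Om_n}$ on the inner face, which is only crude.

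A better route is to pass to a subsequence with $\ov x_n\to\ov x\in\partial\Om$ and $\nu_n\to\nu$, and to compare $w_{\Om_n}$ on the thin set $D_{n,\delta}=\Om_n\cap\{(y-\ov x_n)\cdot\nu_n<\delta\}$, after translating so that $\ov x_n=\ov x$. The boundary data on the inner face $\{(y-\ov x_n)\cdot \nu_n=\delta\}$ converge uniformly to those of $w_\Om$. Using the harmonic-measure representation $w=\text{(explicit quadratic } \tfrac12 \delta'(\delta'')\text{-type term)}+\text{harmonic part}$, I would write
\[ w_{\Om_n}(\ov x_n-t\nu_n)\le \tfrac{1}{2}t(2\delta-t)+\sup_{\text{face}} \frac{w_{\Om_n}}{\delta}\,t\,(1+o_\delta(1)). \]
This is obtained by comparing, on the half-slab, with $\frac12 t(2\delta-t)$ plus the Poisson extension of the face data. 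The Poisson extension is controlled by the face data near $\ov x$ up to tails of order $\delta/R$. Dividing by $t$ and letting $t\to0$ gives
\[\|\nabla w_{\Om_n}\|_\infty\le \delta+\sup_{B(\ov x,\sqrt\delta)\cap\text{face}}\frac{w_{\Om_n}}{\delta}+o_\delta(1).\]
Letting $n\to\infty$ and using uniform convergence, then $\delta\to0$, the right side tends to at most $\limsup_{y\to\ov x}|\nabla w_\Om(y)|$ by the mean value theorem. By part 2 this equals $|\nabla w_\Om(\ov x)|\le\|\nabla w_\Om\|_\infty$. The delicate points are making the Poisson-tail estimate uniform in $n$ via convexity, and ensuring the chosen window shrinks to $\ov x$. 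I expect this step to need the most care.
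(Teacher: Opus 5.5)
Your proof is correct, and it takes a genuinely different and more elementary route than the paper's.

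\textbf{The paper's route.} The paper first shrinks $\Om_n$ by a homothety so that $\Om_n\subset\Om$. It treats an angular limit point separately via Lemma~\ref{d04}. Otherwise it uses monotonicity to compare with the chord domains $\Om\cap\{x_d>\delta_n\}$ of the fixed limit set. Their boundary gradient is related to $w_\Om(0',\xi_n)/\xi_n$ by the blow-up/Liouville argument of Lemma~\ref{lem_conv_gradient_subdomain}, and then identified with $|\nabla w_\Om(0)|$ through Lemma~\ref{lem_boundary_differential}. That route needs only geometric convergence and monotonicity, with no quantitative convergence of $w_{\Om_n}$.

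\textbf{Your route.} You place a barrier in the slab of width $\delta$ under the supporting hyperplane of $\Om_n$ at its own fail point. This converts the boundary gradient into the values of $w_{\Om_n}$ on a hyperplane section at distance $\delta$, where uniform convergence $w_{\Om_n}\to w_\Om$ applies. This convergence is standard, via Payne's bound and Arzelà--Ascoli. You need no inclusion reduction, no corner case and no blow-up.

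\textbf{A simplification.} The step you dismissed as crude already suffices. Let $G_n$ be the supremum of $w_{\Om_n}$ over the whole section $F_n$, and let $s$ denote the distance to the supporting hyperplane. The supersolution $\frac12 s(\delta-s)+G_n s/\delta$ gives $\|\nabla w_{\Om_n}\|_\infty\le \frac{\delta}{2}+\frac{G_n}{\delta}$. Each point of $F_n$ lies within $\delta$ of $\Om_n^c$, hence within $\delta+\vps_n$ of $\Om^c$, where $\vps_n=d_H(\Om_n^c,\Om^c)$. The Lipschitz bound for the zero extension of $w_\Om$ then yields $G_n\le \|\nabla w_\Om\|_\infty(\delta+\vps_n)+\|w_{\Om_n}-w_\Om\|_{L^\infty(\R^d)}$. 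Let $n\to\infty$, then $\delta\to 0$.

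This global version also covers $\Om=\emptyset$, since then $G_n\le\|w_{\Om_n}\|_\infty\to 0$. Your limit-point argument does not address that case, and the paper treats it separately.

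\textbf{Why keep the windowed version.} It gives extra information, as the paper's proof also does: combined with lower semicontinuity, accumulation points of fail points of $\Om_n$ are fail points of $\Om$. Its tail control is unproblematic. By the uniform Lipschitz bound, $\sup_{F_n}w_{\Om_n}/\delta$ is bounded uniformly in $n$, so a relative tail of order $\delta/R$ costs only $O(\sqrt{\delta})$ when $R=\sqrt{\delta}$.
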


 In Theorem \ref{th_inward_max} below we extend the results of \cite{Burd} concerning the existence of a line segment in the boundary of maximisers of the maximum stress under volume and perimeter constraints. We show that this phenomenon is a particular instance of a more general mechanism, which applies in arbitrary dimension to \textit{inward-maximisers} of the maximum stress.

\begin{theorem}\label{th_inward_max}
Let $\Om\subset\R^d$ be a bounded, open, convex set. Assume that for any open convex subset $\Om'\subset\Om$ we have
$$\Vert\nabla w_\Om\Vert_\infty\geq\Vert \nabla w_{\Om'}\Vert_{\infty}.$$
Let $x\in\partial\Om$ be a fail point of $\Om$, and let $\nu_x$ be an outward normal vector of $\Om$ at $x$. Then
$$\left\{y\in\R^d:(y-x,\nu_x)=0\text{ and }|y-x|\leq \pi^{-1}\Vert \nabla w_\Om\Vert_{\infty}\right\}\subset\partial\Om,$$
and the flat ball defined above does not contain any other fail point.
\end{theorem}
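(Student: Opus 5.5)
The plan is a comparison argument: cut $\Om$ by a half-space close to $x$ and show that, if the flat ball were not contained in $\partial\Om$, the cut set $\Om'$ would have strictly larger maximum stress, contradicting the hypothesis. Write $M=\Vert\nabla w_\Om\Vert_\infty=|\nabla w_\Om(x)|$ (by Theorem \ref{th_existence_failpoint}, item 4). Choose coordinates with $x=0$ and $\nu_x=e_d$, so $\Om\subset\{y_d<0\}$. Since $x$ is a fail point with $M>0$, item 1 of Theorem \ref{th_existence_failpoint} shows that $x$ is not angular. Hence $\nu_x$ is the unique normal and $\nabla w_\Om(0)=-M e_d$.

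For small $t>0$ set $\Om_t=\Om\cap\{y_d<-t\}$ and $D_t=\Om\cap\{y_d=-t\}$, a flat convex piece of $\partial\Om_t$. The torsion function of $\Om_t$ satisfies $w_{\Om_t}\le w_\Om$. Let $h=w_\Om-w_{\Om_t}$. It is harmonic in $\Om_t$, vanishes on $\partial\Om\cap\partial\Om_t$, and equals $w_\Om$ on $D_t$. On $D_t$ we have $w_\Om(y',-t)\approx tM$ near the origin, by the boundary differentiability \eqref{eq_differentiable_bord}.

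The key quantitative step is to compute the normal derivative of $w_{\Om_t}$ on $D_t$ at the point above the origin and compare it with $M$. Write $w_{\Om_t}=w_\Om-h$. Then
\[
-\partial_{y_d} w_{\Om_t}(0,-t)=-\partial_{y_d}w_\Om(0,-t)+\partial_{y_d}h(0,-t).
\]
The first term is about $M-t$, from item 2 of Theorem \ref{th_existence_failpoint} together with $-\Delta w_\Om=1$. To leading order in $t$ the second term is the normal derivative at the centre of the flat face of a harmonic function with boundary data of size $tM$ on $D_t$. Suppose $D_t$ contains a $(d-1)$-ball of radius $r$ centred at $(0,-t)$, and suppose $\Om$ near $D_t$ looks, after rescaling, like a slab of width comparable to $t$ over that ball. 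Then the gain from $h$ is compared with the loss from the curvature term $-t$. The constant $\pi^{-1}$ should come from the principal Dirichlet eigenvalue of the strip/ball: a harmonic function in a thin region behaves like $\sinh$/$\sin$ of $\pi y_d/\text{width}$.

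So the first step is to suppose the flat ball of radius $\rho=M/\pi$ is not contained in $\partial\Om$. Then for some $r<\rho$ the face $\partial\Om\cap\{y_d=0\}$ does not contain the ball of radius $r$. The second step uses convexity: $\Om$ stays within a thin wedge-like region above the missing boundary points. The third step builds an explicit barrier, modelled on $\sin(\pi y_d/\ell)$-type functions or on the torsion function of a slab $\{-\ell<y_d<0\}$ truncated laterally, which gives $\Vert\nabla w_{\Om_t}\Vert_\infty>M$ for suitable small $t$. This contradicts the inward-maximality hypothesis, and I expect this third step to be the main obstacle. The first thing to try is to tilt the cutting hyperplane slightly toward the missing boundary point rather than cut parallel to the face, and to expand $\Vert\nabla w_{\Om_t}\Vert_\infty - M$ to first order in $t$. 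The sign of that expansion is governed by comparing $M$ with $\pi r$, and the threshold $r=M/\pi$ is exactly where it changes sign.

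For the last assertion, suppose another fail point $z\neq x$ lay in the flat ball. Both $x$ and $z$ sit on the same flat face $F$ with common normal $e_d$, and $w_\Om\to0$ on $F$. Apply the first part at $z$: the flat ball of radius $\rho$ around $z$ also lies in $\partial\Om$, so the flat part extends. Then use a strict comparison: the function $-\partial_{y_d}w_\Om$ is superharmonic-like near $F$, since $\partial_{y_d}w_\Om$ is harmonic. By the Hopf/strong maximum principle applied to $\partial_{y_d}w_\Om$ on a half-ball based on $F$, its boundary trace can attain the maximum $M$ at two points only if it is constant on the face. The face is bounded, and $|\nabla w_\Om|$ must drop near the rim of $F$, where convexity bends $\partial\Om$ inward. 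This gives the contradiction, so the fail point in the flat ball is unique.
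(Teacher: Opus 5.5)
Your overall strategy is the right one: cut parallel to the tangent plane at depth $t$, write $w_{\Om_t}=w_\Om-h$, and balance an $O(t)$ gain against an $O(t)$ loss with threshold $r=M/\pi$. However, the step that actually produces the inequality is left open, and the mechanism you suggest for it is not the one that works. The harmonic function $h$ lives in $\Om_t$, which is not thin; only the removed cap is. So no strip eigenvalue, no $\sin(\pi y_d/\ell)$ barrier and no tilting enters.

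The missing estimate is a half-space Poisson-kernel barrier at the point $p_t$ where $w_\Om$ restricted to $D_t$ is maximal. Put $m_t=w_\Om(p_t)$, and let $g_t$ be the harmonic extension of $1_{D_t}$ to $\{y_d<-t\}$. Then $h\le m_t g_t$ in $\Om_t$, with equality at $p_t$, hence
\[
\partial_{y_d}h(p_t)\ \ge\ m_t\,\frac{2}{d\om_d}\int_{\{y_d=-t\}\setminus D_t}\frac{\d y}{|y-p_t|^{d}}\ \ge\ \frac{m_t}{\pi r_t},\qquad r_t=\mathrm{dist}(p_t,\partial D_t).
\]
The last step uses only a supporting half-plane of $D_t$ at a nearest boundary point. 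The constant comes from $\pi^{-1}=\frac{2}{d\om_d}\int_{\mathbb S^{d-2}}(\theta\cdot e)_+\,\d\theta$.

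Centring at $(0,-t)$, as you do, fails for both terms. For the gain, the comparison yields this clean bound only where $w_\Om(\cdot,-t)$ is maximal; this is hypothesis (H) of Theorem \ref{the5}. Elsewhere the tangential variation of $w_\Om$ on $D_t$ enters with uncontrolled sign. For the loss, item 2 of Theorem \ref{th_existence_failpoint} gives only $-\partial_{y_d}w_\Om(0,-t)=M+o(1)$. You need $M-t-o(t)$, because the gain is itself $O(t)$. The paper gets this at $p_t$ from $\mu(t)=\max_{D_t}w_\Om$. One has $\mu'(t)=-\partial_{y_d}w_\Om(p_t)$, and $\mu''\ge\partial^2_{y_d}w_\Om(p_t)\ge-1$ since the tangential Hessian is nonpositive at a slice maximum; hence $\mu'(t)+t\ge M$. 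Inward maximality then gives $t\ge m_t/(\pi r_t)\ge w_\Om(0,-t)/(\pi r_t)$, so $\liminf r_t\ge M/\pi$.

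Two further pieces are then missing. First, the argument yields a flat ball of radius $M/\pi$ centred at a limit point $(\tilde x_0,0)$ of the $p_t$, not at $x$. You must still show $\tilde x_0=x$. Second, your uniqueness argument rests on a false principle. Neither Hopf's lemma nor the strong maximum principle prevents the boundary trace of the harmonic function $\partial_{y_d}w_\Om$ from attaining its maximum at two points of $F$. Nor does ``the gradient drops near the rim'' contradict two maxima.

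The paper settles both points with two ingredients absent from your proposal:
\begin{enumerate}
\item The $\frac12$-concavity of $w_\Om$ makes $\tilde x\mapsto(\partial_{x_d}w_\Om(\tilde x,0))^{1/2}$ concave on the face. So any two fail points on the face span a segment of fail points. This covers $x$ and $(\tilde x_0,0)$, which one shows is itself a fail point via item 2 of Theorem \ref{th_existence_failpoint}.
\item The nondegeneracy Lemma \ref{lem_isolatedfailpoint} shows that such a segment forces $e\cdot\nabla w_\Om\equiv0$ for some horizontal $e\neq0$. In $d=2$ this is unique continuation at the boundary; for $d\ge3$ it is a homogeneous-expansion argument. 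This is impossible for bounded $\Om$.
\end{enumerate}
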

As we shall see in Section \ref{sec_inwardmax}, this theorem applies to any maximiser of the maximum stress under a constraint satisfying suitable monotonicity properties. This framework includes constraints on volume and perimeter, as well as on the diameter, inradius, circumradius, torsional rigidity, and the first Dirichlet eigenvalue of the Laplacian.

The following free boundary problem  provides some intuition on the presence of flat parts in the boundaries of inward-maximal sets. All its solutions satisfy  the hypothesis of Theorem \ref{th_inward_max}. Given a bounded open set $D$, consider
\[
\max \{\|\nabla w_\Omega\|_\infty:\ \Omega\subset D,\ \Omega\ \mbox{convex}\}.
\]
  If $D$ is also convex, then it is readily observed that the solution to this problem is a chord domain, namely, the intersection of $D$ with a halfspace. Thus, $\Omega$ is one of the two subdomains obtained by splitting $D$ with a hyperplane. The maximum of the gradient of the corresponding solution is attained at a point on this hyperplane.

Finally we discuss the maximisation problem for the maximum stress under a volume or a perimeter constraint in any dimension $d\ge 2$. We prove the existence of solutions which are  rotationally  symmetric about an axis. These results complement the ones of  \cite{Burd}. For the perimeter constraint we further prove that all maximisers are of this type and, in two dimensions, that the class of maximisers is closed under convex Minkowski addition (which does not exclude the uniqueness of the maximiser). Below, let $G(\Om)$ denote either the volume or the perimeter of $\Om$.

 \begin{theorem}\label{d20}

 Let $m>0$. The following problem
 \begin{equation}\label{d43}
c:= \max\{\|\nabla w_\Om\|_\infty : \Om \subset \R^d, \Om \mbox{ open and convex }, G(\Om)=m\}
 \end{equation}
 has at least one solution.

 Assume $\Om \subset \R^{d-1} \times \R_+$ is solution of problem \eqref{d43} such that $0 \in \partial \Om$ is a fail point.
Then
 \begin{enumerate}
 \item [1.] The set $\Om$ is an inward maximiser for the maximum stress of the torsion function. In particular,
 $$B^{d-1}_{\pi^{-1}\Vert \nabla w_\Om\Vert_{\infty}}\times\{0\}\subset\partial\Om.$$

 \item [2.]  The set $\Om$ is of class $C^1$.
  \item [3.] The   set $\Om^*$  obtained by axial Steiner symmetrisation of $\Om$ around $\R\ov e_d$ is also a solution of problem \eqref{d43}.
 \item [4.]
If $G=\Hd$, then we have the following.
 \begin{itemize}
 \item [(i)]
  The sets $\Om$ and $\Om^*$  coincide.  If $d \ge 3$ then $\partial \Om^*$ does not contain  more than two flat parts in the boundary. If two flat parts are present, then they are balls of dimension $d-1$ in  hyperplanes orthogonal to $\ov e_d$.
\item [(ii)] In $\R^2$,   if $\Om_0$ and $\Om_1$ are two solutions as above, then for every $t \in (0,1)$ their Minkowski sum
 $(1-t) \Om_0+t\Om_1$ is also a solution.
 \end{itemize}
 \end{enumerate}
  \end{theorem}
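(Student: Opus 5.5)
Existence comes first, by the direct method in the class of convex sets, using Theorem \ref{th_stability_maxstress} for continuity. Take a maximising sequence $\Om_n$ with $G(\Om_n)=m$. Since $\|\nabla w_\Om\|_\infty$ is invariant under translations, we may assume $0\in\Om_n$. The functional scales like length: $\|\nabla w_{t\Om}\|_\infty = t\|\nabla w_\Om\|_\infty$.

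\textbf{Boundedness of the maximising sequence.} I would prove an a priori bound $\|\nabla w_\Om\|_\infty\le C\,\rho(\Om)$, with $\rho$ the inradius. One route is the barrier $w_\Om(x)\le \frac12\,\mathrm{dist}(x,H)(2\rho'-\mathrm{dist}(x,H))$ on a slab containing $\Om$, combined with the gradient bound at fail points from Theorem \ref{th_existence_failpoint}, item 2. If the diameters of $\Om_n$ blew up, the constraint $G=m$ would force (for perimeter in $d\geq2$, or for volume with thin sets) $\rho(\Om_n)\to0$. Hence $\|\nabla w_{\Om_n}\|_\infty\to 0$, which is not maximising, since a ball already gives a positive value. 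So the $\Om_n$ have bounded diameter and nondegenerate inradius. Blaschke selection then gives $\Om_n\to\Om$ in Hausdorff distance, with $\Om$ convex and open. Convergence of complements holds in the convex setting, volume and perimeter are continuous along such sequences, and Theorem \ref{th_stability_maxstress} passes the functional to the limit.

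\textbf{Item 1.} I would check the hypothesis of Theorem \ref{th_inward_max}. If $\Om'\subset\Om$ is convex with $\|\nabla w_{\Om'}\|_\infty>\|\nabla w_\Om\|_\infty$, then $G(\Om')\le G(\Om)=m$, by inclusion for volume and by monotonicity of perimeter under convex inclusion. Dilate $\Om'$ by the factor $t=(m/G(\Om'))^{1/k}\ge1$ (with $k=d$ or $d-1$). This gives an admissible set whose value is $t\|\nabla w_{\Om'}\|_\infty>c$, a contradiction. Theorem \ref{th_inward_max} then yields the flat ball.

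\textbf{Item 2.} For $C^1$ regularity, suppose $\partial\Om$ has a corner at $y$, i.e.\ a nonunique normal. I would cut off a small cap near $y$ by a hyperplane, forming $\Om_\eps$. Near the corner $\nabla w$ is small; by Theorem \ref{th_existence_failpoint}, item 1, $\nabla w=0$ at angular points. So the fail point, which lies on the flat ball away from $y$, is essentially unaffected. The key estimate is that the loss of $G$ is of higher order than the change of the maximum stress: cutting a cap of height $\eps$ at a corner saves $G$ of order $\eps\cdot(\text{cross-section})$, while the perturbation of $\partial_\nu w$ at the distant fail point is $o(\eps)$ times that. Rescaling back to $G=m$ then increases the value, a contradiction. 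Quantifying this trade-off, via a comparison of $w_\Om-w_{\Om_\eps}$ through the maximum principle together with the Hopf-type decay of the boundary Poisson kernel, is the main obstacle I expect.

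\textbf{Items 3 and 4.} For item 3, I would show that axial (Schwarz-type) symmetrisation around $\R\ov e_d$ preserves convexity, keeps volume, does not increase perimeter, and does not decrease $-\partial_{x_d}w(0)$. The last point would come from comparing $w$ with its symmetrised version through a rearrangement inequality for the torsion problem restricted to slices, together with the fact that the fail point sits at the centre of the flat ball. For item 4(i), strict perimeter decrease under symmetrisation unless the set is already symmetric gives $\Om=\Om^*$. The flat-part statement uses item 1 applied at each fail point, together with the symmetry. For 4(ii), I would use the Brunn--Minkowski-type concavity: perimeter is linear under Minkowski sums in $\R^2$, and I expect $\|\nabla w\|_\infty$ to be concave along Minkowski combinations of sets sharing the aligned flat segment. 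I am least sure of this concavity step.
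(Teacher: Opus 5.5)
Your architecture (direct method, rescaling as in Proposition~\ref{d120}, cutting a cap at a corner, axial symmetrisation, Minkowski combinations) is the paper's. However, the ingredients that actually prove items 3 and 4(ii) are missing.

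\textbf{Item 3.} Rearranging $w_\Om(\cdot,t)$ slice by slice only gives energy comparisons such as $\mathcal P(\Om)\le\mathcal P(\Om^*)$, because $-\Delta$ couples the slices. It says nothing about $w_\Om$ near a boundary point. What you need is the comparison theorem of Alvino--Trombetti--D\'{\i}az--Lions for Steiner symmetrisation \cite{ATDL96}. On each slice $\{x_d=t\}$ it compares the rearrangement of $w_\Om(\cdot,t)$ with $w_{\Om^*}(\cdot,t)$ in integrated form, and hence gives
$w_\Om(0',t)\le\max_{x'}w_\Om(x',t)\le w_{\Om^*}(0',t)$ for small $t>0$.
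Dividing by $t$ and using Lemma~\ref{lem_boundary_differential} and Theorem~\ref{th_existence_failpoint}, item 2, yields $|\nabla w_{\Om^*}(0)|\ge|\nabla w_\Om(0)|$. The position of the fail point inside the flat ball plays no role; what matters is that the axis passes through the fail point along the normal.

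\textbf{Item 4(ii).} You do not need concavity of $\|\nabla w\|_\infty$ along Minkowski combinations, only a lower bound at the common fail point. The tool is the Borell-type Brunn--Minkowski inequality for the torsion function \cite{Co05}: with $\Om_t=(1-t)\Om_0+t\Om_1$,
$w_{\Om_t}((1-t)x_0+tx_1)^{1/2}\ge(1-t)\,w_{\Om_0}(x_0)^{1/2}+t\,w_{\Om_1}(x_1)^{1/2}$.
Take $x_0=x_1=(0,\delta)$, divide by $\delta^{1/2}$ and let $\delta\to0$; this gives $|\nabla w_{\Om_t}(0)|\ge c$. Linearity of the perimeter under Minkowski combinations in $\R^2$ then finishes the argument.

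\textbf{Item 2.} This follows the paper's route, but the two estimates you postpone are the substance.

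First, $\nabla w_\Om(y)=0$ gives $w_\Om=o(|x-y|)$ only pointwise at $y$. You need $\sup_{K_\eps}w_\Om=o(\eps)$ on the whole cross-section $K_\eps=\H_\eps\cap\Om$, and in $d\ge3$ this cross-section need not shrink to $y$ (e.g.\ along an edge). The paper obtains $\sup_{K_\eps}w_\Om\le C\eps^{\frac{\pi}{\pi-\alpha}\wedge 2}$, with $\alpha\in(0,\pi)$ the angle between two normals at the corner. It does so by comparison with a truncated two-dimensional wedge times $\R^{d-2}$, using Grisvard's corner asymptotics. It then bounds the loss at the fail point by $C\eps^{\frac{\pi}{\pi-\alpha}\wedge 2}a_\eps$, where $a_\eps=\Hd(K_\eps)$, via the Poisson kernel of a fixed comparison domain.

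Second, the gain is $\ge\frac1d\eps a_\eps$ for the volume by a cone comparison. For the perimeter, however, the gain is the excess $\Hd(\partial\Om\setminus\partial\Om_\eps)-a_\eps$ of the cap's lateral area over its base, and your heuristic gives no lower bound for it. The paper proves \eqref{d50}, namely that this excess is $\ge Ca_\eps$. The argument writes the cap as a concave graph over $K_\eps$ and compares it with the cone over $K_\eps$ of the same height. It then normalises $K_\eps$ by its John ellipsoid, whose smallest semi-axis is comparable to $\eps$, and concludes by compactness.

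\textbf{Two minor points.} In the existence part, a slab barrier bounds $\|w_\Om\|_\infty$, but it controls the gradient at a fail point only by the width in the normal direction there. To get $\|\nabla w_\Om\|_\infty\le C\rho(\Om)$, use Payne's inequality \eqref{d00} together with Steinhagen's width--inradius bound. In 4(i), the bound on the number of flat parts comes from rotational symmetry alone, not from item 1: a $(d-1)$-dimensional facet not orthogonal to the axis would generate a continuum of facets.
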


In order to provide a broader perspective, we also consider other constraints, such as the diameter, circumradius, torsional rigidity, and first eigenvalue, and we briefly discuss some qualitative properties of the corresponding maximisers.\bigbreak

\textbf{Organisation of the paper.} In Section \ref{sec_preli} we collect several useful lemmas on the boundary behaviour of the gradient. In Section \ref{sec_gradient}, we establish the boundary differentiability of the torsion function and show that the maximum stress is reached at a fail point of the boundary, thereby proving Theorem \ref{th_existence_failpoint}. In Section \ref{sec_stability}, we establish the stability of the maximum stress for   a  sequence of convex sets (Theorem \ref{th_stability_maxstress}). In Section \ref{sec_inwardmax}, we prove that the fail points of inward  maximisers   lie on a flat part of the boundary. In Section \ref{sec_areaperimeterconstraint}, we study maximisers of the maximum stress under volume or perimeter constraints, establishing Theorem \ref{d20}. Finally, in Section \ref{sec_furtherremarks}, we study the maximisation of the maximum stress under other geometrical or variational constraints (diameter, circumradius, torsional rigidity and first Dirichlet eigenvalue).

\section{Preliminaries}\label{sec_preli}
Let $\Om \subset \R^d$ be a bounded, open, convex set. The torsion function associated to $\Om$ is the  weak solution of \eqref{da01},
defined as the unique minimiser of the torsion energy 
 $$u \in H^1_0(\Om)\mapsto  \frac 12 \int_\Om |\nabla u|^2 \d x -\int_\Om u \d x.$$
The minimiser, $w_\Om$, satisfies the Euler-Lagrange equation
 $$w_\Om\in H^1_0(\Om), \; \forall \vphi \in H^1_0(\Om) \;\; \int_\Om \nabla w_\Om \nabla \vphi \d x = \int_\Om \vphi \d x.$$
 The torsional rigidity of $\Om$ is $\mathcal P(\Om):= \int_\Om w_\Om =\int_{\Om}|\nabla w_\Om|^2$.

 By convention, the torsion function $w_\Om \in H^1_0(\Om)$ is seen as a function on $\R^d$ extended by $0$ on $\R^d \sm \Om$. If $\Om= \emptyset$, then $H^1_0(\Om)=\{0\}$ so $w_\Om=0$.

It was shown in \cite{vdBC} that if $\Om\subset \R^d$ is a non-empty open set, then $w_{\Om}$ is bounded if and only if  $\lb (\Om)$, the bottom of the spectrum of the Dirichlet Laplacian, is strictly positive. Moreover for a dimensional constant $c_d$ we have that
$$1\le \|w_\Om\|_\infty \lb(\Om) \le c_d.$$ The torsion function $w_\Om$ is formally the pointwise limit of the increasing sequence  $R \to w_{\Om \cap B_R}$ for $R \to +\infty$. The limit function $w_\Om$ defined in this way is independent on the choice of origin of the centre of the balls. Moreover, it satisfies $-\Delta w_\Om =1$ in the sense of distributions on $\Om$.
We recall that for bounded convex sets,
\begin{equation}\label{d00}\frac 2d  \|w_\Om \|_\infty\le  \|\nabla w_\Om\|_\infty ^2 \le 2 \|w_\Om \|_\infty.
\end{equation}
Both inequalities are sharp, the first one being due to  Weinberger \cite{WH} with equality on a ball  and the second is due to  Payne \cite[Theorem 3]{P2} with equality on $\R^{d-1}\times (0,h),\,h>0$. See also Sperb \cite[(6.12)]{RS}.
Let us point out that the second inequality holds as well for unbounded convex sets for which $ \lb(\Om) >0$ (and so $\|w_\Om\|_\infty <+\infty$). In fact, for such a set $\Om$, the sequence $(w_{\Om \cap B_R})_R$ is locally uniformly bounded in $W^{1, \infty}$, meaning that the second inequality passes to the limit when $R\to \infty$.

 We recall that $x_0\in \partial \Om$ is called a {\it corner point} (or angular point) if the normal  cone to  $\Om$ at $x$
 $$\mathcal N_\Om(x) = \{ v \in \R^d : (v,y-x) \le 0, \; \;  \forall  y\in \Om\}$$
  is not a singleton. In other words, there exist two different hyperplanes of normals $ \nu_1\not=  \nu_2$, passing through $x_0$ and leaving $\Om$ on one side.
  In such an angular point, we have that
\begin{equation}\label{eq_decay_angle}
w_\Om(x)\le C_\Om |x-x_0|^\alpha,
\end{equation}
where
$\alpha= \min \{2, \frac\pi\om\}$
and  $\omega$ denotes the angle in the plane  $span\{\nu_1, \nu_2\}$ \cite[Chapter 4]{Gr85}. See also \cite{JCA2003} for applications in shape optimisation.

We call a set $\Om$ a chord domain if it is obtained by intersecting $\Om$ with a half-space $\mathcal H^+$, where  $\mathcal H$ denotes a hyperplane.

An open ball centred at $x$ of radius $r$ is denoted $B_r(x)$. If $x=0$, we simply write $B_r$. Throughout the paper,  we denote by $\omega_d$ the volume of the unit ball in $\R^d$, by $|\Om|$ the volume of $\Om$ and by
$\Hd(\partial\Om)$ its perimeter.

The Hausdorff distance between two closed sets $F_1,F_2$ is given by
 $$d_H(F_1, F_2)= \inf \{\vps >0 : F_1\sq F_2\oplus B_\vps, F_2\sq F_1\oplus B_\vps\},$$
 where $\oplus$ means Minkowski sum.
 A sequence $(\Om_n)_n$ is said to converge in the Hausdorff complementary topology (denoted $H^c$) to $\Om$, if $d_H(\R^d \sm \Om_n, \R^d\sm \Om) \to 0$. If $\Om_n, \Om$ are open, uniformly bounded,  convex sets such that $\Om_n \to \Om$ in $H^c$, then
 $w_{\Om_n} \to w_\Om$ strongly in $H^1(\R^d)$. This last property is called $\gamma$-convergence. We refer to \cite{BB05} for an exhaustive analysis of the geometric stability of the torsion function.

 If $\Om\subset \R^2$ is a convex polygon, then $w_\Om \in H^{2+\delta}(\Om)$, for some $\delta >0$ depending on angles and relative length of the edges (see  \cite[Chapter 4]{Gr85} and \cite{BDLN92}).

 In a coordinate system, a point $x=(x_1, \dots, x_d)$ may be denoted by $x=(x', x_d)$ or $(\tilde x, x_d)$, where $x'= (x_1, \dots, x_{d-1})$.

We denote
$$\mathcal K_{\R^d}= \{\Om \subset \R^d : \Om \mbox{ is open, bounded and convex}, \Om \not=\emptyset\}.$$

\section{Boundary behaviour of the gradient}\label{sec_gradient}
 In this section we provide a description of the behaviour of the gradient of the torsion function at boundary points of a (not necessarily smooth) convex set, with particular emphasis on its precise definition and its continuity or discontinuity properties. The existence of the gradient at boundary points is discussed in detail in \cite{LW06}. For completeness, we provide below a self-contained definition, together with a characterisation of the slope of the gradient at a boundary point. In Theorem \ref{th_existence_failpoint}, we further establish the upper semicontinuity of the gradient magnitude  up to the boundary and prove the existence of a boundary point at which its maximum is attained.

\smallskip
\noindent{\bf Behaviour of the gradient near corner points.}
The first result describes the pointwise behaviour of the gradients of the torsion functions associated with a sequence of chord domains near an angular boundary point.
\begin{lemma}\label{d04}
Let $\Om \in \mathcal K_{\R^d}$ and assume $\ov{x}\in \partial \Om$ is an angular point.
Assume $\Om_n=\Om \cap \H_n^+$ are chord domains and  $x_n \in \Om \cap \partial \Om_n$ is such that $x_n \to \ov{x}$. Then
$$\lim_{\nif} |\nabla w_{\Om_n}(x_n) |=0.$$

\end{lemma}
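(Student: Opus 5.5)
We argue by domination: bound $w_{\Om_n}$ above by a function that is small near $\ov x$, then turn the smallness of $w_{\Om_n}$ into smallness of $|\nabla w_{\Om_n}(x_n)|$ at the boundary point $x_n$. The domination is immediate from monotonicity: $\Om_n\sq\Om$ gives $0\le w_{\Om_n}\le w_\Om$ by the maximum principle. By \eqref{eq_decay_angle}, near the angular point $\ov x$ we have $w_\Om(x)\le C_\Om|x-\ov x|^\alpha$ with $\alpha=\min\{2,\pi/\om\}>1$, because the opening angle satisfies $\om<\pi$. Hence on $B_{2r}(\ov x)$,
$$0\le w_{\Om_n}\le C_\Om (2r)^\alpha \quad\text{uniformly in } n.$$

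Next we pass from sup-norm smallness to gradient smallness at $x_n$. Set $r_n:=|x_n-\ov x|\to0$, or $r_n:=\max\{|x_n-\ov x|,1/n\}$ to avoid degeneracy. Rescale around $x_n$: define $v_n(y):=r_n^{-1}w_{\Om_n}(x_n+r_n y)$ on the rescaled convex domain $U_n:=r_n^{-1}(\Om_n-x_n)$, which contains $0$ on its boundary. Then $-\Delta v_n=r_n$ in $U_n$ and $v_n=0$ on $\partial U_n$. On $B_1$ the domination gives $0\le v_n\le C r_n^{\alpha-1}\to0$.

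The key tool is a boundary gradient estimate on convex domains: if $-\Delta v=f$ in $U\cap B_1$ with $v=0$ on $\partial U\cap B_1$, $U$ convex, $0\in\partial U$, then
$$\limsup_{x\to0}|\nabla v(x)|\le C(\|v\|_{L^\infty(B_1\cap U)}+\|f\|_\infty).$$
We prove this by a barrier built from the supporting half-space at $0$. Because $U\sq\{(y,\nu)<0\}$ is convex, the function $\phi(y)=-A(y,\nu)+B|y|^2$, suitably corrected (e.g. using the torsion function of a slab or half-ball containing $U\cap B_1$), dominates $v$. Concretely, we compare $v$ with $M\,h$, where $h$ is the solution on the half-ball $\{(y,\nu)<0\}\cap B_1$ with $-\Delta h=1$, boundary value $1$ on the spherical part and $0$ on the flat part. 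By the maximum principle, $v\le (\|v\|_\infty+\|f\|_\infty)h$ on $U\cap B_1$. Since $h$ is smooth up to the flat part near $0$ with $|\nabla h(0)|\le C$, this yields $v(y)\le C\epsilon_n|y|$ near $0$, with $\epsilon_n:=C r_n^{\alpha-1}+r_n\to0$.

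The remaining step, which is the main obstacle, is converting this one-sided linear bound into a statement about $|\nabla w_{\Om_n}(x_n)|$, meaning the boundary gradient given by item~2 of Theorem~\ref{th_existence_failpoint} (or by the limsup definition). There are two routes. The first: for convex domains $w$ is $\frac12$-concave (Kennington/Keady), so $\sqrt{w}$ is concave, and $|\nabla w|$ at a boundary point equals the normal slope $\lim_{t\to0}w(x_n-t\nu)/t$. That slope is bounded by $C\epsilon_n$, and scaling back gives $|\nabla w_{\Om_n}(x_n)|=|\nabla v_n(0)|\le C\epsilon_n\to0$. The second route avoids concavity: combine the barrier bound $v_n\le C\epsilon_n \mathrm{dist}$ with interior gradient estimates on balls of radius comparable to the distance to $\partial U_n$. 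This gives $|\nabla v_n|\le C\epsilon_n$ on $U_n\cap B_{1/2}$ near $0$, hence $\limsup_{x\to0}|\nabla v_n(x)|\le C\epsilon_n$. Here the distance bound needs a barrier at each nearby boundary point, not only at $0$; that is available because the same half-ball construction applies at every boundary point of $U_n\cap B_{1/2}$, uniformly. In either route the constants are independent of $n$, which concludes $\lim_n|\nabla w_{\Om_n}(x_n)|=0$.
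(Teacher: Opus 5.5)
Your argument is correct, and it takes a different, shorter route than the paper.

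\textbf{The paper's route.} It works at the scale $d_n=d(x_n,\partial\Om)$, where $\Om_n\cap B(x_n,d_n)$ is exactly a half-ball, and applies the Gilbarg--Trudinger gradient bound there. It must then split into the cases $d_n/r_n\ge c$ and $d_n/r_n\to 0$. The second case needs an extra barrier on a half-ball of radius $r_n+d_n$ around the projection $y_n$ of $x_n$ onto $\partial\Om$, to show $w_\Om\lesssim d_n\sup\vps$ near $y_n$.

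\textbf{Your route.} You work directly at the scale $r_n=|x_n-\ov x|$ with a single barrier on $\H_n^+\cap B(x_n,r_n)$. This is legitimate because $\Om_n\cap B(x_n,r_n)$ lies in that half-ball and $w_{\Om_n}$ vanishes on all of $\partial\Om_n$. So it does not matter how $\partial\Om$ cuts through the ball. This removes the case analysis and gives an explicit rate,
$$|\nabla w_{\Om_n}(x_n)|\le C\bigl(r_n^{-1}\sup_{B_{2r_n}(\ov x)}w_\Om+r_n\bigr)\le C(r_n^{\alpha-1}+r_n).$$
Like the paper, it uses only that $w_\Om=o(|x-\ov x|)$ at the corner.

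\textbf{The step you call the main obstacle is immediate.} Since $x_n\in\Om\cap\H_n$, the function $w_{\Om_n}$ is smooth up to the flat boundary near $x_n$. Its gradient there is classical and normal to $\H_n$, and its length is the inward normal derivative. That derivative is bounded by the barrier's, because both functions vanish at $x_n$ and the barrier dominates. So you need neither $\tfrac12$-concavity, nor the Lipschitz estimate of your second route, nor item 2 of Theorem~\ref{th_existence_failpoint}. The last is better avoided anyway, since that theorem is proved downstream of this lemma.
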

Note that we require only convergence of the points, with no assumption on the convergence of the sequence of sets $(\Om_n)_n$. This result provides a slight refinement of the statement below and will be useful in what follows.
\begin{corollary}\label{cor_gradientcornerpoint}
Let $\Om \in \mathcal K_{\R^d}$ and  assume $\ov{x}\in \partial \Om$ is a corner point. If $x_n\in \Om$, $x_n \to \ov{x}$, then
$$\lim_{\nif} |\nabla w_{\Om}(x_n) |=0.$$
\end{corollary}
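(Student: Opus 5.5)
We deduce Corollary \ref{cor_gradientcornerpoint} from Lemma \ref{d04} by a comparison/monotonicity argument. Given $x_n\in\Om$ with $x_n\to\ov{x}$, we build for each $n$ a chord domain $\Om_n=\Om\cap\H_n^+$ with $x_n\in\partial\Om_n$. The natural choice is to let $\H_n$ be the hyperplane through $x_n$ orthogonal to $\nabla w_\Om(x_n)$ (any hyperplane through $x_n$ if the gradient vanishes, in which case there is nothing to prove), with $\H_n^+$ the side on which $w_\Om$ increases, i.e.\ $\nabla w_\Om(x_n)$ points into $\H_n^+$. Lemma \ref{d04} then gives $|\nabla w_{\Om_n}(x_n)|\to 0$, and it remains to show
$$|\nabla w_\Om(x_n)|\le C\,|\nabla w_{\Om_n}(x_n)| \quad\text{or at least}\quad |\nabla w_\Om(x_n)|\le |\nabla w_{\Om_n}(x_n)|+o(1).$$

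For this comparison, consider $v=w_\Om-w_{\Om_n}$ on $\Om_n$. It is harmonic in $\Om_n$ and vanishes on $\partial\Om_n\cap\partial\Om$. On the flat part $\partial\Om_n\cap\Om\subset\H_n$ it equals $w_\Om\ge0$, so $v\ge0$ in $\Om_n$ by the maximum principle. Moreover $v(x_n)=w_\Om(x_n)$.

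The key point is to control the derivative of $w_\Om$ at $x_n$ in the normal direction $e_n=\nabla w_\Om(x_n)/|\nabla w_\Om(x_n)|$. Since $w_{\Om_n}(x_n)=0$ and $w_{\Om_n}\ge0$, we get $\partial_{e_n}w_{\Om_n}(x_n)=|\nabla w_{\Om_n}(x_n)|$, because the gradient of $w_{\Om_n}$ at the flat boundary point $x_n$ is normal to $\H_n$. This is legitimate: $x_n\in\Om$ is an interior point of $\Om$, so $\partial\Om_n$ is flat near $x_n$ and $w_{\Om_n}$ is smooth up to $\H_n$ there by reflection. Hence
$$|\nabla w_\Om(x_n)|=\partial_{e_n}w_{\Om_n}(x_n)+\partial_{e_n}v(x_n),$$
and we need $\partial_{e_n}v(x_n)$ to be small.

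This is the main obstacle. The function $v$ is positive harmonic and its boundary values on $\H_n\cap\Om$ are $w_\Om$, which is not small relative to the scale. The first thing to try is a different choice of $\H_n$ that kills this term. Instead of the gradient hyperplane, take $\H_n$ to be the hyperplane through $x_n$ tangent to the convex superlevel set $\{w_\Om>w_\Om(x_n)\}$. This set is convex because $w_\Om^{1/2}$ is concave on convex domains (Makar-Limanov in $d=2$, Kennington/Korevaar in general), and its normal at $x_n$ is $e_n$. Then set $\Om_n=\{y\in\Om:(y-x_n,e_n)>0\}$, which contains the superlevel set. Now compare $w_\Om-w_\Om(x_n)$ and $w_{\Om_n}$ on $U_n=\{w_\Om>w_\Om(x_n)\}\subset\Om_n$. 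The function $w_\Om-w_\Om(x_n)$ solves $-\Delta=1$ in $U_n$ and vanishes on $\partial U_n$, so it equals $w_{U_n}$, and by domain monotonicity $w_{U_n}\le w_{\Om_n}$. Both vanish at $x_n\in\partial U_n\cap\H_n$, so comparing normal derivatives at this common boundary point gives
$$|\nabla w_\Om(x_n)|=\partial_{e_n}w_{U_n}(x_n)\le \partial_{e_n}w_{\Om_n}(x_n)=|\nabla w_{\Om_n}(x_n)|.$$
Here $\partial_{e_n}w_{U_n}(x_n)$ exists since $w_\Om$ is smooth at the interior point $x_n$. Lemma \ref{d04} applied to this sequence of chord domains then concludes the proof.
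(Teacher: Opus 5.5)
Your proposal is correct and is essentially the paper's proof. You cut $\Om$ by the tangent hyperplane at $x_n$ to the convex superlevel set $U_n=\{w_\Om>w_\Om(x_n)\}$, and use $w_\Om-w_\Om(x_n)=w_{U_n}\le w_{\Om_n}$ with equality at $x_n$ to get $|\nabla w_\Om(x_n)|\le|\nabla w_{\Om_n}(x_n)|$; this is exactly the paper's ``barrier argument,'' and Lemma \ref{d04} then concludes.

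Note that your ``different choice'' of $\H_n$ is in fact the same hyperplane as your first one (through $x_n$ with normal $e_n$). What actually changes is the comparison domain, so the detour via $v=w_\Om-w_{\Om_n}$ can simply be dropped.
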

\begin{proof}
For every $x_n \in \Om$, we take for $\Om_n$ the chord domain  obtained by   partitioning $\Om$ with the tangent hyperplane to the level set of $w_\Om$ which passes through $x_n$ and which contains $\{x \in \Om : w_\Om (x) > w_\Om(x_n)\}$. Recall that the level sets of $w_\Om$ are convex. A barrier argument gives that  $|\nabla w_{\Om_n}(x_n) |\ge |\nabla w_{\Om}(x_n) |$, and Lemma \ref{d04} finishes the proof.
\end{proof}
 \begin{proof}[Proof of Lemma \ref{d04}]
 We recall from Gilbarg-Trudinger \cite{GT01}, Section 3.4, (3.15) the following classical estimates for the gradient of the solution of the equation  $-\Delta u =f $ in $H^1(Q)$.

\smallskip

 If $Q=(-r,r)^ d$, then
 $$|\nabla u(x)| \le \frac dr \sup _{\partial Q} |u|+ \frac r2 \sup _Q|f|.$$

\smallskip

If $Q=B_r$, then for all $x \in B_{\frac r2},$
 \begin{equation}\label{d12}
 |\nabla u(x)| \le C\Big(\frac 1r \sup _{Q} |u| +r\sup _Q|f|\Big),
 \end{equation}
 where $C$ is a dimensional constant.

\smallskip

If $Q=B_r\cap \{x_d >0\}$ and $u=0$ on $\{x_d =0\}$, then
 for all $x \in B_{\frac r2}\cap \{x_d \ge 0\},$
  \begin{equation}\label{d13}
 |\nabla u(x)| \le C\Big(\frac 1r \sup _{Q} |u| +r\sup _Q|f|\Big).
  \end{equation}

  \smallskip

 Coming back to the proof of Lemma \ref{d04}, we can assume $\ov{x}$ is the origin of a system of coordinates.
 We denote, for any $x\in\R^d$,
 $$\vps(x)=\frac{w_\Om(x)}{|x|}$$
 and $\lim_{x\to 0}\vps(x)=0$ by the estimate \eqref{eq_decay_angle}.

 Let us denote $d_n = d(x_n, \partial \Om)$ and $r_n=|x_n|$ (see Figure \ref{d14f}).

 \begin{figure}
\begin{center}
  \includegraphics[height=3.8cm]{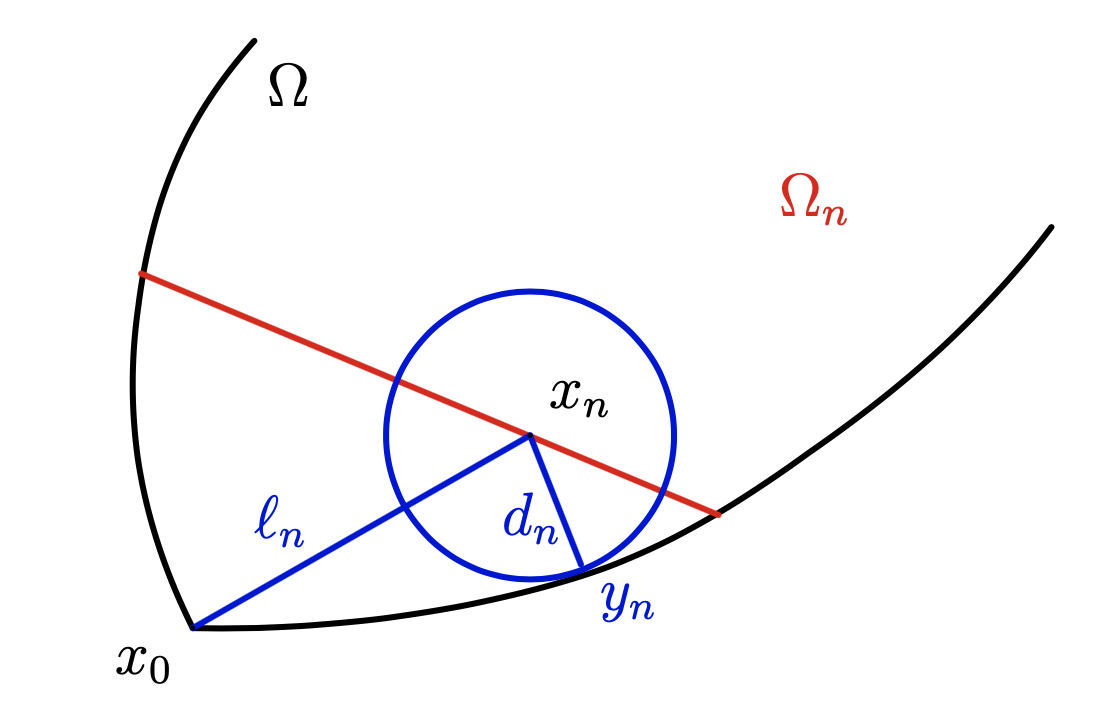}
   \end{center}
\caption{ }
\label{d14f}
\end{figure}

 Following \eqref{d12}, we have
   \begin{equation}\label{d16}
  |\nabla w_{\Om_n}(x_n)|\le C\Big(\frac 1{d_n} \sup _{B(x_n, d_n)} |w_{\Om_n} | +d_n\Big).
    \end{equation}

    \smallskip
 \noindent {\bf Case 1.}  Assume that for a subsequence, $\frac{d_n}{r_n} \ge c >0$, $c$ being some constant. Using $w_{\Om_n}\le w_\Om$ and the decay of $w_\Om$ at the corner point $x_0$, we get
  $$|\nabla w_{\Om_n}(x_n)|\le C\Big(\frac 1{d_n} \sup _{B(x_n, d_n)} |w_{\Om_n} | +d_n\Big) \le C\Big(\frac 1{d_n} \sup _{B(x_n, d_n)} |w_{\Om} | +d_n\Big)$$
  $$\le C\Big(\frac 1{d_n} \sup _{B(x_n, d_n)} (r_n+d_n) \vps (x)+d_n\Big) \le
  C\Big(  \sup _{B(x_n, d_n)} (\frac 1{c}+1) \vps (x)+d_n\Big) \to 0,$$
  which comes from the hypothesis $r_n \le \frac{d_n}c$ and the decay $\vps(x)\to 0$, when $x \to x_0$.

 \smallskip
 \noindent {\bf Case 2.}  Assume now that for a subsequence $\frac{d_n}{r_n} \to 0$. Let $y_n$ be one projection of $x_n$ on $\partial \Om$ and $\PP_n$ a tangent hyperplane to $\Om$ at $y_n$, $\PP_n^+$ being the half space containing $\Om$. Defining $B^+(y_n, r_n+d_n)= B(y_n, r_n+d_n)\cap \PP^+$, we introduce the following barrier function
$$-\Delta \varphi =1 \mbox { in } B^+(y_n, r_n+d_n), \varphi = w_\Om \mbox{ on } \partial B^+(y_n, r_n+d_n).$$
Above, $w_\Om$ is extended by $0$ outside $\Om$ (see Figure \ref{d15}).

\begin{figure}
\begin{center}
  \includegraphics[height=4.5cm]{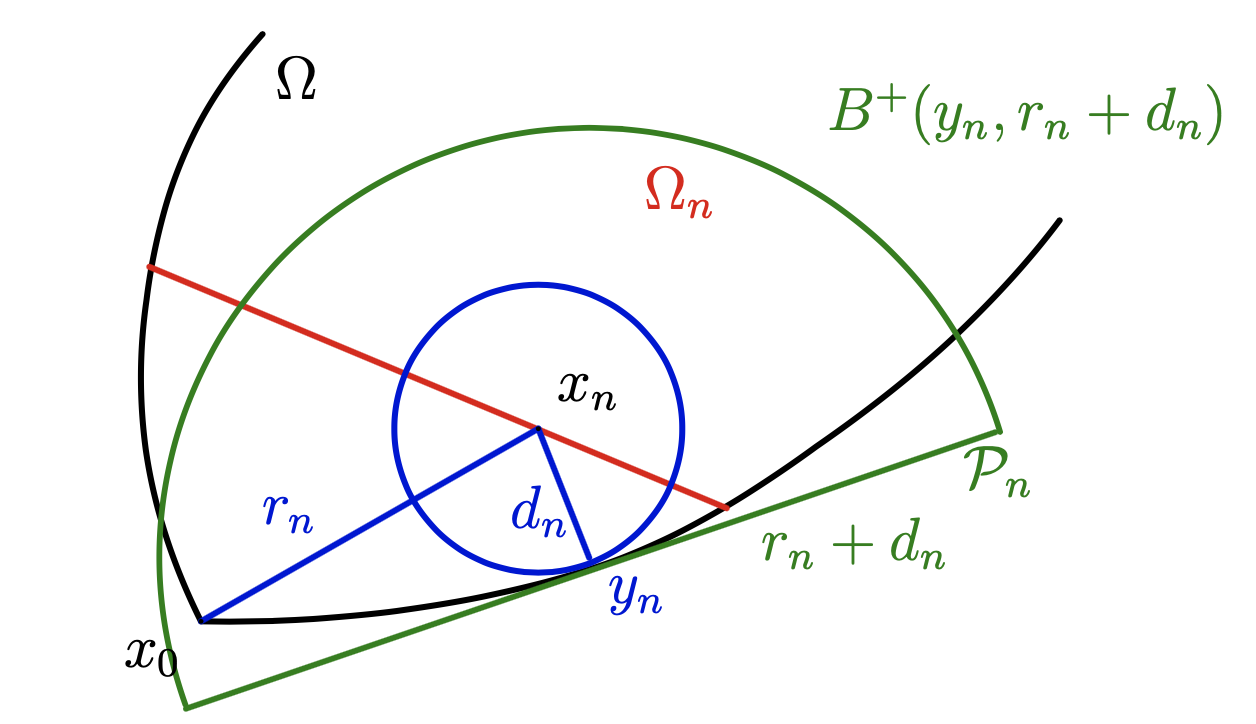}
   \end{center}
\caption{}
\label{d15}
\end{figure}

Relying on \eqref{d13}, we first get an estimate for the gradient of $\varphi$ for every $z \in B^+(y_n, \frac 12 (r_n+d_n))$:
\begin{equation}\label{d17}
|\nabla \varphi (z)| \le  C\Big ( \frac{1}{r_n+d_n } \Big [   \sup _{B^+(y_n, (r_n+d_n))} |\varphi| \Big ]+ (r_n+d_n)\Big).
\end{equation}
But
$$\varphi (z) \le \Big [   \sup_{B (y_n, (r_n+d_n))} w_\Om \Big ] + \frac{\|z-y_n\|^2}{2d}$$
which leads to
$$\varphi (z) \le C (r_n+d_n)  \Big [\sup _{B^+(y_n, (r_n+d_n))}\vps(x)\Big] + \frac{(r_n+d_n)^2 }{2d}$$ for all $z \in B^+(y_n, \frac 12 (r_n+d_n)),$ which gives from \eqref{d17},
$$|\nabla \varphi (z)| \le  C  \sup _{B^+(y_n, (r_n+d_n))}\vps(x).$$
Now, for every $z \in B(y_n, 2d_n)$, we denote by $z_n$ its  projection on $\PP_n$. We have
$$|\varphi (z)|= |\varphi (z) -\varphi (z_n)| = |z-z_n| |\frac{\partial \varphi}{\partial \nu_{\PP_n}} (\xi)| \le C |z-z_n|  \sup _{B^+(y_n, (r_n+d_n))}\vps(x).$$
But
$$w_{\Om_n} (z) \le w_\Om(z)\le \varphi( z) \le C |z-z_n| \sup _{B^+(y_n, (r_n+d_n))}\vps(x)\le Cd_n \sup _{B^+(y_n, (r_n+d_n))}\vps(x).$$
Taking this estimate in \eqref{d16}, we get
$$ |\nabla w_{\Om_n}(x_n)|\le C\Big[\sup _{B^+(y_n, (r_n+d_n))}\vps(x) \Big]   +C d_n \to 0.$$
\end{proof}

\smallskip
\noindent{\bf Boundary differentiability of $w_\Om$.}
Below we describe the differentiability of $w_\Om$ at boundary points and give a characterisation for the value of the slope of the gradient; we give a proof inspired by \cite[Lem. 11.17]{CaffarelliSalsa}, see also \cite[Lem. 4.10]{BLNP26} for a similar method.

\begin{lemma}\label{lem_boundary_differential}
Let $\Om\in \mathcal K_{\R^d}$ be such that $\Om\subset\R^{d-1}\times\R^+$ and $0\in\partial\Om$. For every $r >0$ we define
 $$\alpha(r) =\inf\{\beta >0 : \forall x \in B_r\cap \Om, w_{\Om}(x) \le \beta x_d^+\},$$
and $\alpha:=\lim_{r\to 0}\alpha(r)$ its (monotone) limit. Then
\begin{equation}\label{d61}
w_\Om(x)=\alpha x_d+o_{x(\in\Om)\to 0}(|x|).
\end{equation}
Moreover, for any compact set $K\subset\R^{d-1}\times (0,+\infty)$, we have
\begin{equation}\label{eq_conv_grad_nontangential}
\frac{w_\Om(r \cdot)}{r}\underset{\mathcal{C}^1(K)}{\longrightarrow}\alpha x_d(\cdot)
\end{equation}
as $r\to 0$.
\end{lemma}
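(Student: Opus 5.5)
We prove the lemma by a blow-up argument in the spirit of \cite[Lem. 11.17]{CaffarelliSalsa}.

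\textbf{Preliminaries.} First, $\alpha(r)$ is finite and nonincreasing as $r\downarrow 0$. Finiteness follows from a barrier: $\Om\subset\R^{d-1}\times(0,h)$ for some $h$, hence $w_\Om\le w_{\R^{d-1}\times(0,h)}=\tfrac12 x_d(h-x_d)\le \tfrac h2 x_d$. Monotonicity is clear since $B_r$ shrinks, so $\alpha=\lim_{r\to0}\alpha(r)\in[0,h/2]$ exists. By definition, for every $\eps>0$ there is $r_\eps$ with $w_\Om\le(\alpha+\eps)x_d$ on $B_{r_\eps}\cap\Om$. This gives the upper half of \eqref{d61}; the lower bound $w_\Om(x)\ge \alpha x_d - o(|x|)$ is the real content.

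\textbf{Blow-up.} Set $u_r(y)=w_\Om(ry)/r$ on $\Om_r:=\Om/r$. Then $-\Delta u_r=r\to0$ in $\Om_r$, and $0\le u_r\le(\alpha+\eps)y_d$ on $B_{r_\eps/r}\cap\Om_r$. Since $\Om$ is convex and contained in $\{x_d>0\}$ with $0\in\partial\Om$, the sets $\Om_r$ increase as $r\downarrow0$ to the open tangent cone $T$, which lies in $\{y_d>0\}$. The $u_r$ are locally uniformly bounded and, by \eqref{d12}, locally Lipschitz. By \eqref{d13} (or, at angular points, by an argument as in Lemma \ref{d04}), they are also equicontinuous up to $\partial\Om_r$, since each vanishes on $\partial\Om_r$. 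Hence, along subsequences, $u_r\to u$ locally uniformly on $\R^d$, with $u$ harmonic in $T$, $u=0$ off $T$, and $0\le u\le\alpha y_d$. Interior estimates upgrade this to $C^1$ convergence on compacts of $T$.

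\textbf{Identification of the limit.} If $T\ne\{y_d>0\}$, then $0$ is an angular point, so \eqref{eq_decay_angle} gives $w_\Om=o(|x|)$, which forces $\alpha=0$ and makes everything trivial. Otherwise $T$ is the half-space. Then $u$ is a nonnegative harmonic function in the half-space vanishing on its boundary and bounded by $\alpha y_d$, so Liouville/boundary Harnack gives $u=\beta y_d$ for some $\beta\in[0,\alpha]$.

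\textbf{Showing $\beta=\alpha$.} This is the main obstacle. We argue by contradiction, supposing $\beta<\alpha$ along a sequence $r_k\to0$. Then on $B_1\cap\Om_{r_k}$ we have $u_{r_k}\le(\beta+\delta)y_d+o(1)$. Near the flat part this $o(1)$ must be converted into a bound proportional to $y_d$. For this we compare $u_{r_k}$ on a half-ball with the barrier $\varphi$ solving $-\Delta\varphi=r_k$ with boundary values $\min(u_{r_k},(\alpha+\eps)y_d)$, as in Case 2 of Lemma \ref{d04}. The boundary gradient estimate \eqref{d13}, applied to $\varphi-(\beta+\delta)y_d$, gives $u_{r_k}\le(\beta+2\delta)y_d$ on $B_{1/2}$ for $k$ large. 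Scaling back, this means $\alpha(r_k/2)\le\beta+2\delta<\alpha$, contradicting the monotonicity of $\alpha(\cdot)$. Hence every subsequential limit is $\alpha y_d$.

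\textbf{Conclusion.} Since every subsequential limit equals $\alpha y_d$, the full family converges: uniformly on $B_1$, which is \eqref{d61}, and in $\mathcal C^1(K)$, which is \eqref{eq_conv_grad_nontangential}. The delicate point is the step converting uniform closeness into the linear bound $\le(\beta+2\delta)y_d$ near $\{y_d=0\}$ when $\partial\Om_{r_k}$ is not flat. We handle it by noting that $\Om_{r_k}\subset\{y_d>0\}$, so extending $u_{r_k}$ by zero yields a subsolution in the half-ball, to which the half-ball estimate \eqref{d13} applies.
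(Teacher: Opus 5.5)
Your argument is correct, but it takes a different route from the paper.

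\textbf{The paper's route.} The paper proves \eqref{d61} by a direct, quantitative barrier argument in the style of Caffarelli--Salsa. A single point $x^p$ in a nontangential cone with $w_\Om(x^p)\le\alpha x^p_d-\vps|x^p|$ produces, after rescaling, a ball $B(z_p,c_1\vps)$ on which $w_p\le\alpha x_d-\vps/8$. The capacitary potential $h_p$ of that ball, with the Hopf-type bound $h_p\ge c_2x_d$ near $0$ and the corrector $\chi_p$, is fed into the maximum principle. This gives $w_p\le(\alpha(2r_p)-c\vps+Cr_p)x_d^+$ on $B_{1/10}$, contradicting $\alpha(r_p/10)\ge\alpha$. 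Blow-up is used only afterwards, for \eqref{eq_conv_grad_nontangential}.

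\textbf{Your route.} You use compactness of the rescalings and classify every blow-up limit as $\beta y_d^+$ by Liouville. You then only need the flat-boundary estimate \eqref{d13} to turn uniform closeness to $\beta y_d^+$ into a linear bound of slope $<\alpha$ at scale $r_k/2$. This is applied after comparing the zero extension, a subsolution of $-\Delta\le r_k$ in the half-ball, with the solution $\varphi$.

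\textbf{Comparison.} In your version, the spreading of the deficit from one point to a whole neighbourhood of the origin comes from the rigidity of the blow-up limit rather than from explicit harmonic barriers. This is shorter, treats angular and non-angular points uniformly, and yields \eqref{d61} and \eqref{eq_conv_grad_nontangential} in one stroke. The paper's argument is constructive: it gives an explicit improvement of $\alpha(\cdot)$ in terms of $\vps$ and $L$, and needs no compactness or classification of limits.

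\textbf{Details to tighten.}
\begin{enumerate}
\item Equicontinuity up to $\partial\Om_r$ follows most simply from the global bound $|\nabla u_r|\le\|\nabla w_\Om\|_\infty<\infty$ given by \eqref{d00}. As stated, \eqref{d13} does not apply at non-flat boundary points.
\item In the key step, use the two-sided bound $|u_{r_k}-\beta y_d^+|\le\eta_k\to0$ on $\ov B_1$, not only the upper bound. Then apply \eqref{d13} to $\varphi-\beta y_d$, whose modulus is at most $\eta_k+r_k/(2d)$. Applied to $\varphi-(\beta+\delta)y_d$, it only yields slope $\beta+C\delta$ rather than $\beta+2\delta$; this is harmless since $\delta$ is arbitrary.
\item At an angular point, $w_\Om=o(|x|)$ does not by itself give $w_\Om\le o(1)x_d$ near the faces where $x_d\ll|x|$. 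However, your own contradiction step with $\beta=0$ gives $\alpha=0$ directly, so nothing is lost.
\end{enumerate}
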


\begin{proof}
In this proof we put $L:=\|\nabla w_\Om\|_\infty <+\infty$.

We can assume  $\alpha >0$, otherwise \eqref{d61} is a direct consequence of the definition of $\alpha(r)$.

Clearly, $w_\Om(x)\le  \alpha x_d + o(|x|)$ for $x\in\Om$. Now, if  \eqref{d61} fails to be true, then there exists $\vps >0$ and a sequence $(x^p)_p \subset \Om$, $x^p \to 0$ such that
$$\forall p\in \N, \;\;  w_\Om(x^p)\le \alpha x^p_d-\vps\|x^p\|.$$
Let $y^p=x^p + \frac{\vps}{2L} \|x^p\| e_d$. Then, for $\vps$ small,
$$\|y^p\|\le (1+ \frac{\vps}{2L} ) \|x^p\|\le 2 \|x^p\|$$
and
$$y_d^p \ge  \frac{\vps}{2L} \|x^p\| \ge  \frac{\vps}{4L} \|y^p\|.$$
Consequently, $y^p$ belongs to the cone $\{x_d \ge \frac{\vps}{4L} \|x\|\}$. We have
\begin{equation*}
w_\Om(y^p) \le w_\Om(x^p)+ L\|y^p-x^p\|\le \alpha x_d^p-\vps\|x^p\|+ L\frac{\vps}{2L}\|x^p\|
\le \alpha y_d^p-\frac{\vps}{4}\|y^p\|,
\end{equation*}
where we have used our assumption on $x^p$ in the second inequality.
Let $r_p = \|y^p\|$, $z_p= \frac{1}{r_p} y^p$ and denote $w_p(x)= \frac{1}{r_p}w_\Om(r_px)$. We have
$$\forall x \in B(z_p, c_1\vps), \;\; w_p(x) \le \alpha x_d -\frac{\vps}{8},$$
where $ c_1= \frac{1}{8(\alpha+L)}$. We also have that
$$B(z_p, c_1\vps) \subset (B _\frac{19}{10}\sm B _\frac{1}{10}) \cap \{x_d \ge \frac{\vps}{8L}\}.$$

Let $h_p: (B_2 \cap \{x_d>0\}) \sm \ov B(z_p, c_1\vps) \to [0,1]$ and $\chi_p:B_2\cap\{x_d>0\}\to \R$ be the solutions of
$$
\begin{cases}
\Delta h_p=0 & \text{ in } (B_2 \cap \{x_d>0\})\sm \ov B(z_p, c_1\vps)\\
h_p=0&\text{ in }\partial (B_2 \cap \{x_d>0\} )\\
h_p=1&\text{ in }\partial B(z_p, c_1\vps)
\end{cases},\
\begin{cases}
\Delta \chi_p=0 &\text{ in } B_2 \cap \{x_d>0\}\\
\chi_p=4&\text{ in }
\partial B_2 \cap   \{x_d>0\}\\
\chi_p=0&\text{ in }B_2 \cap   \{x_d=0\}
\end{cases}.
$$
We have the following elementary estimates: for some constants $C_1(\vps,L)<\infty,\,c_2(\vps,L)>0,\,c_3(\vps,L)<\infty,$
we have that
$$h_p(x)\leq C_1(\vps,L)x_d,\ \forall x\in (B_2 \cap \{x_d>0\}) \sm \ov B(z_p, c_1\vps)$$
and
$$h_p(x)\geq c_2(\vps,L)x_d,\ \chi_p(x)\leq c_3(\vps,L)x_d,\ \forall x\in B_\frac{1}{10}\cap\{x_d>0\}.$$
For any $x\in (B_2 \cap \{x_d>0\})\sm \ov B(z_p, c_1\vps)$, let
$$v_p(x)=w_p(x)+ \frac{\vps}{16M}h_p(x) -\alpha(2r_p)x_d + \frac 12 r_p (x_d^2-\chi_p(x)),$$
where $M \ge 1$ will be fixed below. We have $\Delta v_p=0$ in $\om_p$, and
\begin{itemize}
\item[(i)] $\forall x \in \partial B_2 \cap \frac{1}{r_p} \Om$, $$v_p(x)= \underbrace{w_p(x) -\alpha(2r_p) x_d}_{\leq 0}+ \frac{\vps}{16M}\underbrace{h_p(x)}_{=0}+ \frac 12 \underbrace{(x_d^2-\chi_p(x))}_{\leq 0}\le 0.$$
\item[(ii)] $\forall x \in  B_2 \cap \partial \frac{1}{r_p} \Om$, $v_p(x)\le x_d(-\alpha + \frac{\vps}{16M}C_1(\vps, L)+r_p)$. For any sufficiently large $p$, the right-hand side above is negative when $M$ is sufficiently large depending only on $\alpha, \vps, L$.
\item[(iii)] $\forall x \in \partial B(z_p, c_1\vps)$, $v_p(x) \le -\frac{\vps}8+ \frac{\vps }{16M}+2r_p\le -\frac\vps{16}$, for $p$ large.
\end{itemize}
By the maximum principle we conclude that $v_p \le 0$ in $(B_2 \cap \{x_d>0\})\sm \ov B(z_p, c_1\vps)$.  This implies that $\forall x\in B_{\frac {1}{10}}$,
\begin{align*}
w_p(x) &\le \alpha(2r_p)x_d-\frac{\vps}{8M}h_p(x)+ \frac 12 r_p(\chi_p(x)-x_d^2)\\
&\le  \left (\alpha(2r_p) -\frac{c_2(\vps,L)\vps}{8M} + \frac{c_3(\vps,L)r_p}2\right ) x_d^+.
\end{align*}
This contradicts the minimality of $\alpha\left(\frac{r_p}{10}\right)$ for $p \to +\infty$, and proves \eqref{d61}.
Formula \eqref{eq_conv_grad_nontangential} is obtained by a blow-up argument: indeed for any sufficiently small $r$, in a neighbourhood of $K$ in $\R^{d-1}\times (0,+\infty)$ the function $\frac{w_\Om(r\cdot)}{r}+r\frac{x_d^2}{2}$ is harmonic and converges uniformly to $\alpha x_d$, so $\nabla\left(\frac{w_\Om(r\cdot)}{r}+r\frac{x_d^2}{2}\right)$ converges uniformly (in $K$) to $\alpha e_d$.
\end{proof}

\begin{lemma}\label{lem_conv_gradient_subdomain}
Let $\Om \in \mathcal K_{\R^d}$ be such that $\Om\subset\R^{d-1}\times\R_+$ and $0\in\partial\Om$. Let $(\delta_n)_{n\in\N}$ be a sequence of positive numbers converging to $0$. Then there exists a sequence of positive numbers $(\xi_n)_{n\in\N}$ converging to $0$ such that
\begin{equation}\label{eq_conv_grad_subdomains}
\frac{w_{\Om}(0',\xi_n)}{\xi_n}\geq \frac{\partial w_{\Om\cap\{x_d\geq \delta_n\}}}{\partial x_d}(0',\delta_n)+o_{n\to +\infty}(1).
\end{equation}
\end{lemma}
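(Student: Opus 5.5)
The idea is to compare the two sides at an intermediate height $\xi_n=\delta_n+t_n$, where $t_n$ is chosen with $\delta_n\ll t_n\ll\rho_n$ for a suitable scale $\rho_n\to0$. We split according to whether $0$ is a corner point of $\Om$. We write $\Om_n:=\Om\cap\{x_d>\delta_n\}$, $L:=\|\nabla w_\Om\|_\infty$, and note $\|\nabla w_{\Om_n}\|_\infty\le L$ by the barrier argument used in Corollary \ref{cor_gradientcornerpoint}.

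\textbf{Corner case.} If $0$ is an angular point of $\partial\Om$, the points $x_n=(0',\delta_n)$ lie in $\Om\cap\partial\Om_n$ and converge to $0$. Lemma \ref{d04} then gives $|\nabla w_{\Om_n}(x_n)|\to0$, so the right-hand side of \eqref{eq_conv_grad_subdomains} is $o(1)$. Any $\xi_n\to0$ works, since $w_\Om\ge0$. (If $(0',\delta_n)\notin\Om$ for some $n$, then $\Om_n$ is empty or avoids that point, and the statement is trivial.)

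\textbf{Smooth case.} Assume now that $\{x_d=0\}$ is the unique supporting hyperplane at $0$. Then near $0$ the boundary $\partial\Om$ is a graph $x_d=g(x')$ with $g\ge0$ convex and $g(x')=o(|x'|)$. Hence there is $\rho_n\to0$ with $\rho_n/\delta_n\to\infty$ such that the half ball $B^+_n:=B((0',\delta_n),\rho_n)\cap\{x_d>\delta_n\}$ is contained in $\Om_n$. Concretely, choose $\rho_n$ so that $\sup_{|x'|\le\rho_n}g\le\delta_n$, which is compatible with $\rho_n\gg\delta_n$ precisely because $g(x')=o(|x'|)$.

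On $B^+_n$ the function $u_n:=w_{\Om_n}+\frac12(x_d-\delta_n)^2$ is harmonic, nonnegative, and vanishes on the flat part $\{x_d=\delta_n\}$. It satisfies $\sup_{B^+_n}u_n\le L\rho_n+\rho_n^2$, using $w_{\Om_n}\le L\,(x_d-\delta_n)$. By odd reflection across $\{x_d=\delta_n\}$ and interior estimates \eqref{d12} applied to derivatives, we get $|D^2u_n|\le C(L+\rho_n)/\rho_n$ on the half ball of radius $\rho_n/2$. A Taylor expansion along the segment $t\mapsto(0',\delta_n+t)$ then gives, for $0<t<\rho_n/2$,
$$u_n(0',\delta_n+t)\ge t\,\partial_d w_{\Om_n}(0',\delta_n)-C\,\frac{t^2}{\rho_n}.$$
Here the derivative of $w_{\Om_n}$ at the flat boundary point exists, and equals that of $u_n$, by Lemma \ref{lem_boundary_differential} and the regularity of $u_n$ up to the flat part.

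\textbf{Conclusion.} Take $t_n:=\sqrt{\delta_n\rho_n}$, so that $\delta_n/t_n\to0$ and $t_n/\rho_n\to0$, and set $\xi_n:=\delta_n+t_n\to0$. Monotonicity of the torsion function with respect to the domain gives
$$w_\Om(0',\xi_n)\ge w_{\Om_n}(0',\xi_n)=u_n(0',\xi_n)-\tfrac12t_n^2.$$
Dividing by $\xi_n=t_n(1+o(1))$ and using the boundedness $\partial_dw_{\Om_n}\le L$, we obtain
$$\frac{w_\Om(0',\xi_n)}{\xi_n}\ge\partial_d w_{\Om_n}(0',\delta_n)-C\frac{t_n}{\rho_n}-\frac{t_n}{2}-L\frac{\delta_n}{\xi_n},$$
and every error term is $o(1)$.

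The main obstacle is the geometric step in the smooth case: producing $\rho_n$ with $\delta_n\ll\rho_n\to0$ so that the full half ball sits inside $\Om_n$. This is exactly where the uniqueness of the tangent plane at $0$ is used, and why the corner case must be handled separately through Lemma \ref{d04}.
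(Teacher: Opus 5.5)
Your proposal is essentially the paper's proof, with one false intermediate claim that needs a minor repair.

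\textbf{Same route.} You treat the corner case by Lemma \ref{d04}. In the non-angular case you use the same scale separation $\delta_n\ll\sqrt{\delta_n\rho_n}\ll\rho_n$; the paper uses the inradius $a_n$ of the slice $\Om\cap\{x_d=\delta_n\}$ where you use $\rho_n$, and takes $b_n=\sqrt{a_n\delta_n}$. You then conclude with $w_\Om\ge w_{\Om_n}$ at height $\xi_n$. The only real difference is the middle step. The paper blows up at scale $b_n$ and uses $C^2_{\mathrm{loc}}$ compactness plus Liouville to show the normalised functions $g_n$ tend to $0$. You instead bound the Hessian of the oddly reflected harmonic function $u_n$ explicitly and Taylor expand. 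This is the quantitative form of the same fact and gives an explicit rate, roughly $\sqrt{\delta_n/\rho_n}+t_n$.

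\textbf{The false claim.} The bound $\|\nabla w_{\Om_n}\|_\infty\le L=\|\nabla w_\Om\|_\infty$ does not follow from the barrier argument of Corollary \ref{cor_gradientcornerpoint}. That argument gives the reverse inequality $|\nabla w_{\Om_n}(x_n)|\ge|\nabla w_\Om(x_n)|$ at the tangency point. The bound is in fact false in general: chord domains can have strictly larger maximum stress than $\Om$. For instance, \eqref{g22} gives $\|\nabla w_{D_\delta}\|_\infty>\|\nabla w_D\|_\infty$ for small $\delta$, which is exactly the phenomenon behind Theorem \ref{th_inward_max}.

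\textbf{The repair.} You only need a bound uniform in $n$, and either of the following gives one.
\begin{enumerate}
\item As the paper does, use Payne's inequality \eqref{d00} together with monotonicity: $\|\nabla w_{\Om_n}\|_\infty\le\sqrt{2\|w_{\Om_n}\|_\infty}\le\sqrt{2\|w_\Om\|_\infty}$.
\item Avoid gradient bounds on $w_{\Om_n}$ altogether. Since $\Om\subset\{x_d>0\}$ is convex, $w_{\Om_n}\le w_\Om\le L\,x_d\le 2L\rho_n$ on $B^+_n$, so $\sup_{B_n^+}u_n\le 2L\rho_n+\rho_n^2$. The interior gradient estimate for the reflected function then gives $\partial_d w_{\Om_n}(0',\delta_n)=\partial_d u_n(0',\delta_n)\le C(L+\rho_n)$.
\end{enumerate}
With $L$ replaced by either constant in the bound on $\sup u_n$ and in the term $L\,\delta_n/\xi_n$, every step of your argument goes through.
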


\begin{proof}
In the  proof, we denote by  $\Om_n:=\Om\cap\{x_d>\delta_n\}$.

If the origin is an angular point, then \eqref{eq_conv_grad_subdomains} is automatically verified: the right-hand side converges to zero by Lemma \ref{d04}. Thus from now on we assume $0$ is not an angular point.

Let $(a_n)_{n\geq 0}$  be the inradii of $\Om\cap\{x_d=\delta_n\}$  i.e. the largest value such that
$$\Om\cap\{x_d=\delta_n\}\supset B_{a_n}^{d-1}\times\{\delta_n\}.$$
where $B_{r}^{d-1}$ denotes the ball of radius $r$ in $\R^{d-1}$.
Since the origin is not an angular point, we have that
$$\frac{\delta_n}{a_n}\underset{n\to +\infty}{\longrightarrow}0.$$
We fix $b_n:=\sqrt{a_n\delta_n}$ such that $\frac{b_n}{a_n}\to 0$ and $\frac{b_n}{\delta_n}\to \infty$. Let
$$g_n(x):=\frac{1}{b_n}w_{\Om_n}\left((0',\delta_n)+b_n x\right)+b_n\frac{x_d^2}{2}-\frac{\partial w_{\Om_n}}{\partial x_d}(0',\delta_n)x_d.$$
For any $R>0$ and for any sufficiently large $n$, $g_n$ is harmonic in $B_R\cap\{x_d>0\}$ with $g_n=0$ in $B_R\cap\{x_d=0\}$, and $\frac{\partial g_n}{\partial x_d}(0)=0$. Moreover
$$\limsup_{n\to +\infty}\Vert\nabla g_n\Vert_{L^\infty(B_R\cap\{x_d>0\})}\leq \limsup_{n\to +\infty}\Vert\nabla w_{\Om_n}\Vert_{\infty}\leq \sqrt{2\Vert w_\Om\Vert_\infty}.$$
By elliptic boundary regularity, $(g_n)_{n\in\N}$ is compact in $\mathcal{C}^2_{\mathrm{loc}}(\R^{d-1}\times\R^+)$, and any accumulation point $g$ is a  globally Lipschitz harmonic function that vanishes in $\{x_d=0\}$, and is such that $\frac{\partial g}{\partial x_d}(0)=0$: by Liouville's Theorem $g=0$, so the full sequence $(g_n)_{n\in\N}$ converges locally uniformly to  $0$.

The convergence of $g_n(0,1)$ to $0$ gives that
$$\frac{w_{\Om_n}(0',\delta_n+b_n)}{b_n}-\frac{\partial w_{\Om_n}}{\partial x_d}(0',\delta_n)+\frac{b_n}{2}\underset{n\to +\infty}{\longrightarrow}0.$$
By the maximum principle we have that $w_{\Om}\geq w_{\Om_n}$, and $\frac{\delta_n}{b_n}\to 0$ so letting $\xi_n=\delta_n+b_n$ we have
$$\frac{w_{\Om}(0',\xi_n)}{\xi_n}-\frac{\partial w_{\Om_n}}{\partial x_d}(0',\delta_n)\geq \frac{b_n}{b_n+\delta_n}\frac{w_{\Om_n}(0',\delta_n+b_n)}{b_n}-\frac{\partial w_{\Om_n}}{\partial x_d}(0',\delta_n)\geq o_{n\to +\infty}(1).$$
\end{proof}

Below we give the proof of the main result.
\begin{proof}[Proof of Theorem \ref{th_existence_failpoint}]
We prove each point separately.

\noindent 1. This is a consequence of Lemma \ref{lem_boundary_differential}.

\noindent 2. Without loss of generality, we may assume that $\Om \subset \R^{d-1}\times \R^+$, $\ov{x}=0$ and $\nu_{\ov{x}}=-e_d$.
\smallskip
By  \eqref{eq_conv_grad_nontangential} of Lemma \ref{lem_boundary_differential}, we immediately have that

$$\nabla w_\Om(0)=\lim_{t(>0)\to 0}\nabla w_\Om(0',t).$$
Hence
\begin{equation}\label{eq_conv_gradient_verticalline}
|\nabla w_\Om(0)|=\lim_{t(>0)\to 0}\frac{\partial w_\Om}{\partial x_d}(0',t)\leq \limsup_{x(\in\Om)\to 0}|\nabla w_\Om(x)|.
\end{equation}

\smallskip

We now prove that
\begin{equation}\label{d63}
|\nabla w_\Om(0)| \ge  \limsup_{x(\in\Om)\to 0} |\nabla w_\Om(x)|.
\end{equation}
Consider some sequence $(y_n)_{n\in\N}$ of points in $\Om$ converging to the origin such that
$$\lim_\nif |\nabla w_\Om(y_n)|= \limsup_{x(\in\Om)\to 0} |\nabla w_\Om(x)|.$$
If $0$ is an angular point, then $|\nabla w_\Om(0)|=0$ and Corollary \ref{cor_gradientcornerpoint} implies the assertion. From now on we assume that $0$ is not an angular point.

We first prove that there exists a sequence of chord domains
$$\Om_n:= \{ x\in \Om : x_d >\delta_n\},$$
with $\delta _n \to 0_+$, such that
$$\liminf_\nif|\nabla w_{\Om_n} (0', \delta_n) | \ge \lim_\nif |\nabla w_\Om(y_n)|.$$
Here, we use the power concavity of the torsion function. Let us first define
$$\Om_n^{1} :=  \{ x\in \Om : w_\Om (x) > w_\Om(y_n)\}.$$
The set  $\Om_n^1$ defined above is convex and smooth. Moreover, since $0$ is not an angular point
$$\nu_{\Om_n^1} (y_n) \to \nu_{\Om} (0)=-e_d.$$
Let $\mathcal{R}_n$ be a direct rigid movement (consisting of a rotation $\Vec{\mathcal{R}_n}$ and a translation) such that $\mathcal{R}_n(y_n)=0$ and $\Vec{\mathcal{R}_n}(\nu_{\Om_n^1}(y_n))=-e_d$. Let $p \in \Om \cap (\{0\}^{d-1}\times (0,+\infty))$ be fixed, and let $t_n \in (0,1]$ be the largest number such that
$$\Om_n^2: =(1-t_n)p+ t_n \mathcal{R}_n(\Om_n^1) \subset \Om.$$
Then $t_n \to 1$ since $y_n\to 0$ and $\nu_{\Om_n^1}(y_n)\to -e_{d}$.
Hence there exists a sequence of positive numbers $\delta_n$ such that $\delta_n\to 0$ and
$$(0', \delta_n)\in \partial \Om_n^2 \cap \{0\}^{d-1}\times (0,+\infty).$$
We have that
$$|\nabla w_{\Om_n^2} (0', \delta_n) | \to \limsup_{x(\in\Om)\to 0} |\nabla w_\Om(x)| \mbox{ for } \nif.$$
We now let
$$\Om_n := \{ x\in \Om : x_d > \delta_n\}.$$
Since $\Om_n\supseteq\Om_n^2$ and $(0',\delta_n)$ belongs to both sets, we get by the maximum principle that
\begin{equation}\label{d64}
\limsup_\nif |\nabla w_{\Om_n} (0', \delta_n) |\ge  \limsup_{x(\in\Om)\to 0} |\nabla w_\Om(x)|.
\end{equation}
Applying Lemma \ref{lem_conv_gradient_subdomain} to the sequence $\Om_n$,  there exists a sequence of positive numbers $(\xi_n)_{n\in\N}$ converging to $0$ such that
$$\lim_\nif \frac{w_\Om(0', \xi_n)}{\xi_n}\geq  \limsup_{x(\in\Om)\to 0} |\nabla w_\Om(x)|.$$
By Lemma \ref{lem_boundary_differential}, we conclude that
$$|\nabla w_\Om(0)|=\lim_\nif \frac{w_\Om(0', \xi_n)}{\xi_n}\geq  \limsup_{x(\in\Om)\to 0} |\nabla w_\Om(x)|.$$
This completes the proof of Theorem \ref{th_existence_failpoint} item 2.

\noindent 3. Let $\ov{x}\in\partial\Om$ and $(x_n)_{n\in\N}$ be a sequence in $\partial \Om$ that converges to $\ov{x}$. By the previous point, for each $n,$ we may find some $y_n\in\Om$ such that
$$\|x_n-y_n\|\le \frac 1n \mbox{  and  } \Big ||\nabla w_\Om(y_n)|-|\nabla w_\Om(x_n)| \Big | \le \frac 1n.$$
By item 2,
$$|\nabla w_\Om(\ov{x})|\geq \limsup_{\nif} |\nabla w_\Om(y_n)| = \limsup_{\nif}|\nabla w_\Om(x_n)|.$$
This completes the proof of Theorem \ref{th_existence_failpoint} item 3.

\noindent 4. By subharmonicity of $|\nabla w_\Om|^2$ in $\Om$, $|\nabla w_\Om|$ reaches its supremum in $\Om$ for some sequence $(x_n)_{n\in\N}$ in $\Om$ approaching the boundary $\partial\Om$. Up to extraction we may assume $(x_n)_{n\in\Om}$ converges to some limit point $\ov{x}\in\partial\Om$, and by item 2 we have that
$$|\nabla w_\Om(\ov{x})|\geq \lim_{n\to +\infty}|\nabla w_\Om(x)|= \Vert\nabla w_\Om\Vert_{\infty}.$$
Furthermore, by item 2, we have for any $y\in\partial\Om$:
$$|\nabla w_\Om(y)|= \limsup_{x(\in\Om)\to y}|\nabla w_\Om(x)|.$$
This completes the proof of Theorem \ref{th_existence_failpoint} item 4.
\end{proof}

\section{Stability of the maximum stress to geometric perturbations}\label{sec_stability}

The aim of this section is to prove that the maximum stress of a convex set is continuous under the Hausdorff complementary topology.

\begin{proof}[Proof of Theorem \ref{th_stability_maxstress}]
If $\Om=\emptyset$, from the $H^c$-convergence we get that the width of $\Om_n$ converges to $0$. This implies that
$\|w_{\Om_n}\|_\infty \to 0$ so that  by the second inequality in \eqref{d00} $\|\nabla w_{\Om_n}\|_\infty \to 0$. This implies the assertion.

Assume now that $\Om\not =\emptyset$. Then, the sets $\Om_n$ are uniformly bounded and lie in a neighborhood of $\Om$, that we denote by $D$. It is classical that $\Om_n$ $\gamma$-converges to $\Om$ so that
$$w_{\Om_n} \to w_\Om \mbox{ strongly in } H^1_0(D).$$
This implies the lower semicontinuity
$$\|\nabla w_{\Om}\|_\infty\le \liminf_\nif \|\nabla w_{\Om_n}\|_\infty.$$

\smallskip

It remains to prove the upper semicontinuity. In order to do so we provide a series of preliminary results. The proof follows similar lines as the proof of Theorem \ref{th_existence_failpoint} item 3.
Without loss of generality, we may assume that $\Om_n \sq \Om$. Indeed, we fix a centre point in $\Om$ and we take $t_n \in (0,1]$ such that $t_n \Om_n \sq \Om$ and $t_n$ is maximal with this property. Then $t_n\to 1$ as a consequence of the Hausdorff convergence $\Om_n \to \Om$, and
$$\lim_\nif \|\nabla w_{\Om_n}\|_\infty=\lim_\nif \|\nabla w_{t_n\Om_n}\|_\infty.$$
  We relabel the new sequence as $(\Om_n)_n$.

Let $x_n\in\partial\Om_n$ be a fail point of $\Om_n$ (which exists by Theorem \ref{th_existence_failpoint}). Without loss of generality we may assume, up to a subsequence and a rigid transformation (a composition of a rotation $\Vec{\mathcal{R}_n}$ and a translation), that
$$x_n\to 0,\ \Om\subset\R^{d-1}\times\R^+.$$

If $0$ is an angular point, then $|\nabla w_{\Om_n}(x_n)|\to 0$ by an application of Lemma \ref{d04} to the chord domain of $\Om$ that is tangent to $\Om_n$ at $x_n$, so there is nothing to prove.

We assume from now on that the origin is not an angular point, which guarantees that $\nu_{\Om}(0)=-e_d$ and $\nu_{\Om_n}(x_n)\to -e_d$.

 Let $\mathcal{R}_n$ be a rigid transformation such that $\mathcal{R}_n(x_n)=0$ and $\Vec{\mathcal{R}_n}(\nu_{\Om_n}(x_n))=-e_d$. Let $p \in \Om \cap (\{0\}^{d-1}\times (0,+\infty)$ be fixed, and let $t_n \in (0,1]$ be the largest number such that
$$\Om_n': =(1-t_n)p+ t_n \mathcal{R}_n(\Om_n') \sq \Om.$$
Then $t_n \to 1$ since $x_n\to 0$ and $\nu_{\Om_n}(x_n)\to -e_{d}$. This implies that
$$\limsup_{n\to +\infty}|\nabla w_{\Om_n'}(0',\delta_n)|=\limsup_{n\to +\infty}|\nabla w_{\Om_n}(x_n)|=\limsup_{n\to +\infty}\Vert \nabla w_{\Om_n}\Vert_\infty,$$
where $(\delta_n)_{n\in\N}$ is the sequence of positive numbers defined by $\mathcal{R}_n(x_n)=(0',\delta_n)$. Since $\nu_{\Om_n'}(0',\delta_n)=-e_d$ and $\Om_n'\subset\Om$, we have the inclusion
$$\Om_n'\subset\Om\cap\{x_d>\delta_n\}$$
so
$$\limsup_{n\to +\infty}|\nabla w_{\Om\cap\{x_n>\delta_n\}}(0',\delta_n)|\geq\limsup_{n\to +\infty}\Vert \nabla w_{\Om_n}\Vert_\infty.$$
By an application of Lemma \ref{lem_conv_gradient_subdomain} to the sequence of chord domains $\Om\cap\{x_d>\delta_n\}$, there exists a sequence of positive numbers $(\xi_n)_{n\in\N}$ converging to $0$ such that
$$
\limsup_{n\to +\infty}\frac{w_\Om(0',\xi_n)}{\xi_n}\geq \limsup_{n\to +\infty}\Vert \nabla w_{\Om_n}\Vert_\infty.
$$
By Lemma \ref{lem_boundary_differential}, the left-hand side is $|\nabla w_\Om(0)|$, which proves upper semicontinuity:
$$\Vert \nabla w_{\Om}\Vert_\infty\geq \limsup_{n\to +\infty}\Vert \nabla w_{\Om_n}\Vert_\infty.$$

\end{proof}
\begin{remark}\rm
The example mentioned in the introduction is relevant. Consider the sequence of regular $n$-gons inscribed in the unit circle. This sequence converges to the unit disk in the Hausdorff metric, while the corresponding torsion functions converge strongly in $H^1$ to the radial torsion function of the disc. Nevertheless, the behaviour of their gradients at the boundary is markedly different from that of the limiting torsion function. Indeed, along the boundary of each $n$-gon, the gradient oscillates: it vanishes at every vertex, while the maximum of its norm is attained at the midpoint of each edge.  Moreover, the gradients are  H\"older continuous up to the boundary: $\nabla w_{\Om_n} \in C^{0, \frac{\delta_n}2}(\ov \Om_n)$ since the torsion function belongs to $H^{2, \delta_n}(\Om_n)$, for some $\delta_n >0$, with $\delta_n \to 0$. Thus, although each gradient is continuous up to the boundary, this continuity is not uniform with respect to $n$. Nevertheless, the gradients converge in $L ^2$
to the gradient of the torsion function on the disk, whose norm is constant along the boundary. The stability result shows that,  despite this lack of uniform continuity, the maximum of the norm remains continuous under this convergence.\end{remark}

\begin{remark}\rm[Unbounded domains] \rm Assume $\Om$ is an unbounded convex set with torsion function $w_{\Om}$. If the gradient of its torsion function is bounded, then the width of $\Om$ is finite. In general,   if $\Om_n$ are open, convex sets which converge locally in the Hausdorff metric to $\Om$, then the $L^\infty$ norms of the torsion functions of $\Om_n$ and of their gradients will not generally converge.
Take for instance $\Om= (0, \infty) \times (0,1)$ and
$$\Om_n = \{(x,y) : 0<x, 0<y<1+ \frac {x\wedge n}n  \}.$$
 \end{remark}

\section{A free boundary problem: inward maximisers for the gradient magnitude of the torsion function}\label{sec_inwardmax}

The occurrence, under various constraints, of flat portions in the boundary of the maximizers of the gradient magnitude of torsion functions  is related to the following general notion of inward maximisers. 

\begin{definition}\label{def_inward_max}
Let $\Om\in \mathcal K_{\R^d}$. We say that $\Om$ is an inward maximiser for the maximum stress if, for any $\Om'\in \mathcal K_{\R^d}$ such that $\Om'\subset\Om$, we have
$$\Vert \nabla w_\Om\Vert_\infty\geq \Vert \nabla w_{\Om'}\Vert_\infty.$$
\end{definition}
Note that there is no loss of generality in assuming the above condition only for chord subdomains of $\Om$, since one can consider the chord domain supported at a fail point of $\Om'$.
\begin{remark}\rm
Let $D$ be an open, bounded set of $\R^d$. Let $\Om$ be a solution of
$$\max\{ \|\nabla w_\Om\|_\infty : \Om\in \mathcal K_{\R^d}, \Om\sq D\}.$$
Then, $\Om$ is a inward maximiser for the maximum stress.
\end{remark}
The following proposition justifies that this notion applies to maximisers of the maximum stress under a wide variety of geometric constraints.
\begin{proposition}\label{d120}
Let $G:\mathcal K_{\R^d}\to(0, +\infty)$ be such that $G$ is nondecreasing with respect to inclusions and positively homogeneous with respect to rescaling.
Assume $\Om$ is a solution of
$$\max\{ \|\nabla w_\Om\|_\infty :  \Om\in \mathcal K_{\R^d}, G(\Om)=m\}$$
for some $m>0$. Then $\Om$ is an inward maximiser for the maximum stress.
\end{proposition}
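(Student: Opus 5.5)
The plan is a scaling argument: if a convex subset of $\Om$ did strictly better, a dilation of it would satisfy the constraint and beat $\Om$. Let $\Om'\in\mathcal K_{\R^d}$ with $\Om'\subset\Om$, and write $k$ for the degree of homogeneity of $G$, so that $G(t\Om')=t^kG(\Om')$ for $t>0$. We may take $k>0$; this is the relevant case for volume, perimeter, diameter, and the like. Since $G$ is nondecreasing under inclusion, $G(\Om')\le G(\Om)=m$.

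Next we record how the maximum stress scales. The torsion function satisfies $w_{t\Om'}(x)=t^2w_{\Om'}(x/t)$, so $\nabla w_{t\Om'}(x)=t\,\nabla w_{\Om'}(x/t)$. Hence $\|\nabla w_{t\Om'}\|_\infty=t\|\nabla w_{\Om'}\|_\infty$, and the maximum stress is nondecreasing in $t$.

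Now choose $t=(m/G(\Om'))^{1/k}\ge1$, which gives $G(t\Om')=m$. The set $t\Om'$ is open, bounded, convex and nonempty, so it is admissible for the maximisation problem. Optimality of $\Om$ then gives
$$\|\nabla w_\Om\|_\infty\ge\|\nabla w_{t\Om'}\|_\infty=t\|\nabla w_{\Om'}\|_\infty\ge\|\nabla w_{\Om'}\|_\infty,$$
which is exactly Definition~\ref{def_inward_max}.

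There is no real obstacle. The only points to check are that $G$ takes values in $(0,+\infty)$, so $t$ is well defined, and that the homogeneity degree is positive, so that $t\ge1$. If $G$ had negative degree (for instance $\lambda_1$), monotonicity would reverse and the same argument would go through with $G$ nonincreasing. We must also have $w_{\Om'}\ne0$, which holds because $\Om'$ is nonempty.
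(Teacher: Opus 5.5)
Your argument is correct and is essentially the paper's proof: monotonicity gives $G(\Om')\le G(\Om)=m$, homogeneity yields a dilation factor $t\ge 1$ with $G(t\Om')=m$, and optimality combined with $\|\nabla w_{t\Om'}\|_\infty=t\|\nabla w_{\Om'}\|_\infty$ concludes. Your check that $w_{\Om'}\neq 0$ is not needed, since $t\|\nabla w_{\Om'}\|_\infty\ge\|\nabla w_{\Om'}\|_\infty$ holds for every $t\ge1$.
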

 By changing $G$ into $1/G$ we see that this also applies to functions which are nonincreasing with respect to inclusions, and negatively homogeneous with respect to rescaling. Examples of admissible functions $G$ are the quermassintegrals  (e.g. volume, perimeter), diameter, inradius, circumradius, torsion, first eigenvalue of the Dirichlet Laplacian, etc.
\begin{proof}
Let $\tilde{\Om}\in \mathcal K_{\R^d}$ be such that $\tilde{\Om}\subset\Om$. Since $G(\tilde{\Om})\leq G(\Om)$, by homogeneity there exists some $t\geq 1$ such that $G(t\tilde{\Om})=G(\Om)$. By optimality of $\Om$, we have
$$\Vert \nabla w_\Om\Vert_\infty\geq \Vert \nabla w_{t\tilde{\Om}}\Vert_\infty=t\Vert \nabla w_{\tilde{\Om}}\Vert_\infty\geq \Vert \nabla w_{\tilde{\Om}}\Vert_\infty.$$
\end{proof}

Before proving Theorem \ref{th_inward_max}, we give a separate technical lemma on the local nondegeneracy of fail points.

\begin{lemma}\label{lem_isolatedfailpoint}
Let $w\in H^1(B_1\cap\{x_d>0\})$, such that
$$
\begin{cases}
-\Delta w=1 & \text{ in } (B_1\cap\{x_d>0\})\\
w=0 & \text{ in }(B_1\cap\{x_d=0\})\\
\end{cases}
$$
Assume that $w$ is positive, $\frac{1}{2}$-concave, and that $\partial_{x_d}w$ reaches its maximum at the origin. Then either this maximum is strict, or $e\cdot\nabla w=0$ for some nonzero vector $e\in \R^{d-1}\times\{0\}$.
\end{lemma}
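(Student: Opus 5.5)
The plan is to exploit the analyticity of $w$ up to the flat boundary together with the concavity of $\sqrt{w}$, and to reduce a non-strict maximum to a vanishing Cauchy problem for the harmonic function $e\cdot\nabla w$. I read the conclusion $e\cdot\nabla w=0$ as an identity in $B_1\cap\{x_d>0\}$.

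\textbf{Regularity and reduction to the flat part.} First I make the regularity explicit. The function $W:=w+\frac{x_d^2}{2}$ is harmonic in $B_1\cap\{x_d>0\}$ and vanishes on $\{x_d=0\}$. By odd reflection it extends to a harmonic, hence real-analytic, function in $B_1$. Consequently $v:=\partial_{x_d}w=\partial_{x_d}W-x_d$ is harmonic and analytic up to $\{x_d=0\}$. Set $M:=v(0)=\max v$.

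Suppose the maximum is not strict, and let $y$ be another point with $v(y)=M$. If $y$ were an interior point of the half ball, the strong maximum principle would make $v\equiv M$. Then $w=Mx_d+f(x')$, and the boundary condition forces $f=0$, so $\Delta w=0$, a contradiction. Hence every maximum point lies on the flat part, and I may write $y=(y',0)$ with $y'\neq 0$. Put $g(x'):=v(x',0)$.

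\textbf{Concavity of $\sqrt g$ and the segment.} Next I use the $\frac12$-concavity. For each fixed $t>0$ the map $x'\mapsto \sqrt{w(x',t)/t}$ is concave. It converges to $\sqrt{g(x')}$ as $t\to0$, because $w(x',t)=g(x')t+O(t^2)$ by smoothness. Therefore $\sqrt g$ is concave on the flat disc. Since $\sqrt g$ attains its maximum $\sqrt M$ at $0'$ and at $y'$, it equals $\sqrt M$ on the segment $[0',y']$. By analyticity, $g\equiv M$ on the whole line through $0'$ and $y'$ inside the disc. I set $e:=(y',0)/|y'|$.

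\textbf{Cauchy data for $u=\partial_e W=e\cdot\nabla w$.} The function $u$ is harmonic in $B_1$, odd in $x_d$, and vanishes on $\{x_d=0\}$. On the line $\R e$ its normal derivative is $\partial_d u=\partial_e g=0$. I need both Cauchy data, $u$ and $\partial_d u$, to vanish on a relatively open subset of the hyperplane. Holmgren/unique continuation then gives $u\equiv0$, which is the claimed alternative.

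In $d=2$ the line is the whole flat boundary, so the argument closes immediately.

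\textbf{Main obstacle: $d\ge3$.} Here the difficulty is to upgrade $\partial_e g=0$ from a line to a neighbourhood. My first attempt uses concavity in the full interior rather than only at the boundary. For any $x$ and $s\in[0,1]$, concavity of $\sqrt w$ between $x$ and $x+|y'|e$ gives a lower bound for $w$ along the chord. At the boundary this lower bound is saturated in the $x_d$-linear term on the line, since $g=M=\max g$ there.

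Differentiating this saturated inequality at second order in the transverse variables should show that $\partial_e\nabla_{x'}g=0$ along the line. Iterating the argument for all derivatives would give $\partial_e\partial^\alpha g=0$ on the line for every multi-index $\alpha$. Analyticity of $g$ then yields $\partial_e g\equiv0$ near $0'$, which is exactly the vanishing Cauchy data needed above.

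If this iteration fails, my fallback is to combine Hopf's lemma for $M-v$ at the boundary maximum points with the concavity of $\sqrt g$. The aim would be to show that the level set $\{g=M\}$ is a convex set that is invariant under translation by $e$ locally, which again forces $\partial_e g\equiv 0$ near $0'$.
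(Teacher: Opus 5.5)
\textbf{There is a genuine gap: the case $d\ge 3$ is not proved.}

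Your reductions are correct: the odd reflection of $w+\frac{x_d^2}{2}$, the exclusion of interior maximum points, $g\equiv M$ on a line through $0'$ via concavity of $\sqrt g$, and the unique-continuation conclusion in $d=2$ (which is the paper's own remark). But the case $d\ge 3$ is the real content of the lemma, and there you only describe hopes (``should show'', ``if this iteration fails''). You need $\partial_e g\equiv 0$ on an open piece of $\{x_d=0\}$, and you give no argument for it.

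The specific problems are these.
\begin{itemize}
\item The statement $\partial_e\nabla_{x'}g=0$ on the line is trivial. Since $g\le M$ is maximal at every point of the line, $\nabla_{x'}g$ vanishes there, and so does its derivative along the line. You give no mechanism for the higher transverse derivatives.
\item The fallback is a non sequitur. Invariance of $\{g=M\}$ under translation by $e$ says nothing about $g$ off that set. Concavity of $\sqrt g$ plus maximality on a line does not force $\partial_e g\equiv 0$ nearby. Take $x'=(x_1,x'')$ and $\sqrt g=\sqrt M-\frac{|x''|^2}{1-x_1}$. This is analytic and concave (a perspective function) and maximal exactly on $\{x''=0\}$. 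Yet $\partial_{x_1}\partial_{x_2}^2 g(0)=-4\sqrt M\neq 0$, so the very first nontrivial step of your iteration already fails at the level of boundary data.
\item Hopf's lemma for $M-v$ gives nothing, since $\partial_d v=\partial_{dd}w=-1$ on the flat boundary anyway.
\end{itemize}
Any proof must therefore use the interior $\frac12$-concavity of $w$ together with harmonicity, not only the trace $g$.

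\textbf{The missing idea, as the paper does it.} It suffices to show that $\partial_{x_1x_1x_d}w(0)=0$ forces $\partial_1 w\equiv 0$, taking $e=e_1$.
\begin{itemize}
\item Expand $w=\sum_k w_k$ into homogeneous parts. Maximality of $\partial_d w$ at $0$ gives $w_2=-\frac12 x_d^2$ and $w_3=x_d\bigl(\frac13\mathrm{tr}(A)x_d^2-(x',Ax')\bigr)$ with $A\ge 0$. Then $A_{11}=0$ gives $\partial_1 w_3=0$.
\item Let $\kappa\ge 4$ be the first degree with $\partial_1 w_\kappa\neq 0$. The $\frac12$-concavity gives $\partial_{11}w\le\frac{(\partial_1 w)^2}{2w}=O(|x|^{2\kappa-3})$.
\item A nonzero homogeneous harmonic polynomial of degree $\ge 2$ vanishing on $\{x_d=0\}$ changes sign in the upper half-space. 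Hence $\partial_{11}w_\ell=0$ for $\ell\le 2\kappa-2$, so $\partial_1 w_\kappa=v(x_2,\dots,x_d)$.
\item The $2\times 2$ minor inequality for $D^2\sqrt w\le 0$ in the $(x_1,x_i)$ directions then forces $\partial_{x_i}v=0$ for $2\le i\le d-1$. This is impossible for a nonzero harmonic $v$ of degree $\kappa-1\ge 3$ vanishing on $\{x_d=0\}$.
\end{itemize}
This bootstrapping of vanishing orders, driven by interior concavity and harmonicity, is what your plan lacks.
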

Note that in dimension $d=2$, this lemma is direct by unique continuation at the boundary.
\begin{proof}
We assume $d\geq 3$ and prove the following stronger conclusion: if $\partial_{x_1,x_1,x_d}w(0)=0$ and $\partial_{x_1}w\neq 0$, then a contradiction arises. This is sufficient, by the concavity of $\sqrt{\partial_{x_d}w}$ in $B_1^{d-1}\times\{0\}$.

By analyticity of $w$ we write
$$w(x)=\sum_{k\in\N}w_k(x)$$
where $w_k$ is $k$-homogeneous. We directly have $w_0=0$, $w_1(x)=\partial_{x_d} w(0) x_d$. Observing that $w_2(x)+\frac{1}{2}x_d^2$ must be a combination of $x_jx_d$ for $j<d$ by harmonicity, since $\partial_{x_d}w$ is maximal at the origin we must have $w_2(x)=-\frac{1}{2}x_d^2$. For any $i<d$, we obtain that $$\partial_{x_i}w(x)=\mathcal{O}(|x|^2),\ \partial_{x_i,x_i}w(x)=\mathcal{O}(|x|).$$
There exists some symmetric matrix $A\in S_{d-1}(\R)$ such that
$$w_3(x',x_d)=x_d\left(\frac{1}{3}\mathrm{tr}(A)x_d^2-( x',Ax')\right)$$
By maximality of $\partial_{x_d} w$ at the origin, $A$ must be nonnegative. We assumed above that
$$\partial_{x_1,x_1,x_d}w(0)=-2A_{1,1}=0$$
In particular this implies $A_{1,i}=0$ for all $i<d$, so $\partial_{x_1}w_{3}=0$. Since $\partial_{x_1}w$ is not identically zero, we may define $\kappa\geq 4$ to be the smallest integer such that $\partial_{x_1} w_{\kappa}$ is nonzero. By $\frac{1}{2}$-concavity of $w$ we have $$\partial_{x_1,x_1}w\leq \frac{1}{2}w^{-1}(\partial_{x_1}w)^2=\mathcal{O}(|x|^{2\kappa-3}).$$
We remind that $\partial_{x_1,x_1}w_\ell$ must change sign (if it is non-zero) for $\ell\geq 4$ by Hopf's lemma since it is $(\ell-2)$-homogeneous, thus the previous inequality implies $\partial_{x_1,x_1}w_{\ell}=0$ for $\ell=4,\hdots,2\kappa-2$. Thus
$$\partial_{x_1,x_1}w(x)=\mathcal{O}(|x|^{2\kappa-3}).$$
In particular since $2\kappa-2\geq \kappa$, we obtain $\partial_{x_1,x_1}w_\kappa=0$, so
$$\partial_{x_1}w_\kappa(x)=v(x_2,\hdots,x_d)$$
for some $(\kappa-1)$-homogeneous non-zero harmonic function $v$. We may fix some $i\in\{2,\hdots,d-1\}$ such that $\partial_{x_i}v$ is nonzero. Then by the concavity of $w^\frac{1}{2}$, we have
$$\left|\partial_{x_1,x_i}w-\frac{1}{2}w^{-1}\partial_{x_1}w\ \partial_{x_i}w\right|^2\leq \left|\partial_{x_1,x_1}w-\frac{1}{2}w^{-1}(\partial_{x_1}w)^2\right|\cdot\left|\partial_{x_i,x_i}w-\frac{1}{2}w^{-1}(\partial_{x_i}w)^2\right|$$
Thus by the previous estimates we have
$$\left|\partial_{x_1,x_i}w-\frac{1}{2}w^{-1}\partial_{x_1}w \partial_{x_i}w\right|=\mathcal{O}(|x|^{\kappa-1})$$
Since $w^{-1}\partial_{x_1}w\partial_{x_i}w=\mathcal{O}(|x|^{\kappa})$, this implies
$$\partial_{x_i}v(x_2,\hdots,x_d)=\mathcal{O}(|x|^{\kappa-1})$$
The left-hand side is a non-zero homogeneous polynomial of degree $\kappa-2$, hence a contradiction.
\end{proof}

\begin{proof}[Proof of Theorem \ref{th_inward_max}]
Assume $\Om \subset \R^{d-1}\times (0, +\infty)$ and $0 \in \partial \Om$ is a fail point.  Since $0$ cannot be angular, the normal field $\nu$ of $\partial \Om$ is continuous at $0$.

For every $\vps >0$ small, we let $\tilde x \in \R^{d-1}$ be such that $(\tilde x(\vps), \vps)$  is the point where
$$x\in\R^{d-1}\mapsto w_\Om(x,\eps)$$
reaches its maximum on $\H_\vps \cap \Om$, where $\H_\vps = \{x_d =\vps\}$. By the concavity of $w_\Om ^\frac 12$, the point $(\tilde x(\vps), \vps)$  is a boundary point of the convex level set $\{x : w_\Om(x)>w_\Om (\tilde x(\vps), \vps)\}$, which is a subset of $ \H_\vps^+= \{ x_d >\vps\}$ for sufficiently small $\vps$.
The point $(\tilde x(\vps), \vps)$ is uniquely defined and the mapping $\vps \to \tilde x(\vps)$ is continuous.

Let us denote
$$ M(\vps) =w_\Om (\tilde x(\vps), \vps).$$
$M$ is continuously differentiable with
$$M'(\vps)=  \frac{\partial w_\Om}{\partial x_d} (\tilde x(\vps), \vps)$$
(note that the differentiability of $\tilde{x}(\vps)$ is not required for this). Likewise, we have in the distributional sense
$$M''(\vps)\geq\frac{\partial^2 w_\Om}{\partial x_d^2} (\tilde x(\vps),\vps)$$
as may be observed by taking the discrete differential
\begin{align*}
\frac{M(\vps+h)-2M(\vps)+M(\vps-h)}{h^2}&\geq \frac{w_\Om(\tilde{x}(\vps),\vps+h)-2w_\Om(\tilde{x}(\vps),\vps)+w_\Om(\tilde{x}(\vps),\vps-h)}{h^2}\\
&= \frac{\partial^2 w_\Om}{\partial x_d^2} (\tilde x(\vps),\vps)+o_{h\to 0}(1).
\end{align*}
Using the equation $\Delta w_\Om=-1$, and the fact that $\tilde{x}\mapsto w(\tilde{x},\vps)$ reaches its maximum in $\tilde{x}=\tilde{x}(\vps)$ so $\frac{\partial^2 w_\Om}{\partial x_j^2}(\tilde{x}(\vps),\vps)\leq 0$ for every $j\in\{1,\hdots,d-1\}$, we get
$$\frac{\partial^2 w_\Om}{\partial x_d^2} (\tilde x(\vps),\vps)\geq -1.$$
Thus we have, for any $0<s<t$,
\begin{equation}\label{eq_equadiffM}
M'(s)\leq M'(t)+(t-s).
\end{equation}
Let $\vps>0$. Let $\delta\in (0,\vps)$. By the definition of $\tilde{x}(\delta)$ we have
$$M(\delta)\geq w_{\Om}(0,\delta),$$
and by the mean value theorem, there exists $\xi(\delta)\in (0,\delta)$ such that
$$M(\delta)=\delta M'(\xi(\delta))$$
So applying \eqref{eq_equadiffM} with $(s,t)=(\xi(\delta),\vps)$ gives
$$M'(t)+t\geq\xi(\delta)+\frac{w_\Om(0,\delta)}{\delta}.$$
Taking $\delta\to 0$ for a fixed $\vps>0$ and using Lemma \ref{lem_boundary_differential} on the last term, we obtain, for all sufficiently small $\vps>0$, that
\begin{equation}\label{eq_estimateM'(t)}
M'(\vps)+\vps\geq \Vert\nabla w_\Om\Vert_{\infty}.
\end{equation}

Let us denote $\Om_\vps= \Om \cap \{x_d >\vps\}$. Then $w_{\Om_\vps}-w_\Om$ is harmonic in $\Om_\vps$, negative in $\Om_\vps$ and vanishes on $\partial \Om \cap \partial \Om_\vps$. Its minimal value   in $\Om_\vps$   is attained at $(\tilde x(\vps), \vps)$.
As a consequence of the Hopf principle, we get
$$\frac{\partial}{\partial x_d}(w_{\Om_\vps}-w_\Om) |_{(\tilde x(\vps), \vps)} >0.$$
We shall estimate this derivative using the harmonic function
$$g_\vps: \R^{d-1} \times (\vps, +\infty) \to \R,\;  g_\vps|_{\R^{d-1}\times \{\vps\}}= 1_{\H_\vps\cap \Om},$$
given by the Poisson kernel in the half space. Based on  the fact that
$$w_{\Om_\vps}-w_\Om \ge - w_{\Om}(\tilde x(\vps), \vps) g_\vps\text{ in }\Om_\vps$$
and
$$(w_{\Om_\vps}-w_\Om) (\tilde x(\vps), \vps)= -w_{\Om}(\tilde x(\vps), \vps) g_\vps (\tilde x(\vps), \vps),$$
we get
\begin{equation}\label{dba10}
\frac{\partial}{\partial x_d}(w_{\Om_\vps}-w_\Om) |_{(\tilde x(\vps), \vps)}  \ge w_{\Om}(\tilde x(\vps), \vps)\left(-\frac{\partial g_\vps}{\partial x_d}(\tilde x(\vps), \vps)\right).
\end{equation}
We now find a lower bound for the last factor. To simplify the computations, let $K_\vps= -(\tilde x(\vps), \vps)+\H_\vps\cap \Om$ and $h_\vps: \R^{d-1}\times (0, +\infty) \to \R$, $h_\vps(\tilde x, x_d)= g_\vps(\tilde x+ \tilde x(\vps), x_d+\vps)$ given by the Poisson kernel
$$h_\vps (x)= \frac{2x_d}{d\om_d} \int_{K_\vps}\frac{1}{|x-y|^d} \d y.$$
 We define, for any $\theta\in\mathbb{S}^{d-2}$,
$$\rho_{K_\vps}(\theta)=\sup\left\{r>0:(r\theta,0)\in K_\vps\right\}.$$
Then
\begin{align*}
\frac{h_\vps(0,0)-h_\vps(0,\delta)}{\delta}&=\frac{2}{d\om_d\delta}\left(1-\int_{K_\eps}\frac{\delta}{(\delta^2+|y|^2)^\frac{d}{2}}\d y\right)=\frac{2}{d\om_d}\int_{\R^{d-1}\setminus K_\eps}\frac{1}{(\delta^2+|y|^2)^\frac{d}{2}}\d y,
\end{align*}
so, taking the limit $\delta\to 0$, we have
\begin{align}\label{dba11}
-\frac{\partial h_\eps}{\partial x_d}(0,0)&=\frac{2}{d\om_d}\int_{\R^{d-1}\setminus K_\eps}|y|^{-d}\d y=\frac{2}{d\om_d}\int_{\mathbb{S}^{d-2}}\rho_{K_\eps}(\theta)^{-1}\d\theta.
\end{align}
Let $r_\vps=\inf_{\theta\in\mathbb{S}^{d-2}}\rho_{K_\vps}(\theta)$. There exists some $y_\vps\in \partial K_\vps$ such that $|y_\vps|=r_\vps$. Then, by convexity of $K_\vps$, we have $\rho_{K_\eps}(\theta)\leq \frac{r_\eps}{\left(\frac{y_\eps}{|y_\eps|},\theta\right)}$ for any $\theta\in\mathbb{S}^{d-2}$ such that $\left(y_\eps,\theta\right)>0$. So
$$\rho_K(\theta)^{-1}\geq r_\vps^{-1}\left(\frac{y_\eps}{|y_\eps|},\theta\right)_+.$$
We obtain the lower bound:
\begin{align*}
-\frac{\partial h_\eps}{\partial x_d}(0,0)&\geq \frac{2}{d\om_d}\int_{\mathbb{S}^{d-2}}\rho_K(\theta)^{-1}\d \theta\geq \frac{2}{d\om_d}r_\eps^{-1}\int_{\mathbb{S}^{d-2}}\left(\theta,\frac{y_\vps}{|y_\vps|}\right)_+\d\theta=(\pi r_\eps)^{-1}.
\end{align*}
{
Note that in dimension $d=2$, this yields the same constant:
$$
-\frac{\partial h_\eps}{\partial x_d}(0,0)\geq \pi^{-1}\sum_{\theta\in \{0,\pi\}} \rho_{K_\eps}(\theta)^{-1}\geq \pi^{-1} \min(\rho_{K_\eps}(0),\rho_{K_\eps}(\pi))^{-1}.
$$
}

As a consequence, we get
\begin{equation}\label{d102}
 \frac{\partial w_{\Om_\vps}}{\partial x_d}(\tilde x(\vps), \vps)\ge \frac{\partial w_{\Om}}{\partial x_d}(\tilde x(\vps), \vps)+ w_{\Om} (\tilde x(\vps), \vps)(\pi r_\eps)^{-1}.
 \end{equation}
Putting together \eqref{eq_estimateM'(t)} and \eqref{d102}, we get
$$\frac{\partial w_{\Om_\vps}}{\partial x_d}(\tilde x(\vps), \vps)\ge \|\nabla w_\Om\|_\infty -\vps+ w_{\Om} (\tilde x(\vps), \vps)(\pi r_\eps)^{-1}.$$
Since by hypothesis $\Om$ is an inward maximiser for the maximum stress, the left-hand side is bounded by $\Vert\nabla w_\Om\Vert_{\infty}$, so
\begin{equation}\label{d103}
\vps\ge w_{\Om} (\tilde x(\vps), \vps)(\pi r_\eps)^{-1}\geq  w_{\Om} (0, \vps)(\pi r_\eps)^{-1}.
 \end{equation}
Multiplying by $r_\eps/\eps$ and passing to the limit, we get as $\vps\to 0$:
\begin{equation}\label{eq_estinverserhoK}
\liminf_{\eps\to 0}r_\eps \ge \pi^{-1}\|\nabla w_\Om\|_\infty.
\end{equation}

Let $\tilde{x}_0$ be some accumulation point of $(\tilde{x}_0(\eps))_{\eps\to 0}$. Let $K$ be the Hausdorff limit of $(K_\vps)_{\vps\to 0}$, and $r=\liminf_{\vps\to 0}r_\vps$, then $K$ contains $(\tilde{x}_0,0)+B_{r}^{d-1}\times\{0\}$, and by \eqref{eq_estinverserhoK} we have
$$r\geq \pi^{-1}\Vert \nabla w_\Om\Vert_{\infty}.$$
Finally, we prove that we must have $\tilde{x}_0=0$. Assume by contradiction that $\tilde{x}_0\neq 0$, and $(\eps_k)$ is some vanishing sequence such that $\tilde{x}(\eps_k)\to\tilde{x}_0$. For any $k$, there exists some $\delta_k\in (0,\eps_k)$ such that $$\partial_{x_d}w_{\Om}(\tilde{x}(\eps_k),\delta_k)\geq \eps_k^{-1}w_{\Om}(\tilde{x}(\eps_k),\eps_k)\geq \eps_k^{-1}w_{\Om}(0,\eps_k)\to \Vert \nabla w_\Om\Vert_\infty$$
so by upper semi-continuity of the gradient we have
$$|\nabla w_\Om(\tilde{x}_0,0)|= \Vert\nabla w_\Om\Vert_\infty$$
The concavity of $w_\Om^{\frac{1}{2}}$ restricted to $\Om\cap\{x_d=\eps\}$ for $\eps\to 0$ implies the concavity of
$$\tilde{x}\in\partial\Om\cap\{x_d=0\}\mapsto \partial_{x_d}w_\Om(\tilde{x},0)^\frac{1}{2},$$
so we have
$$\forall t\in (0,1),\ |\nabla w_{\Om}(t\tilde{x}_0,0)|=\Vert \nabla w_\Om\Vert_\infty.$$
By Lemma \ref{lem_isolatedfailpoint} applied to $x\mapsto w_\Om((\tilde{x}_0,0)+x)$, this implies (by analyticity of $w_\Om$ in $\Om$) that $e\cdot\nabla w_\Om=0$ in $\Om$ for some non-zero vector $e$, which is in contradiction with the fact that $\Om$ is bounded.

To finish, note that the  flat part can not contain another fail point. If this was the case, the concavity of $w_\Om^\frac 12$ would imply that all the segment relying the two fail points consists on fail points, in contradiction with Lemma \ref{lem_isolatedfailpoint}.

This concludes the proof.
\end{proof}
 \begin{remark}\label{m01.1} 
 The statement of Theorem \ref{th_inward_max} gives a qualitative result on the structure of the boundary of a an inward maximiser in a neigbourhood of a fail point: the presence of a flat ball of controlled size centered at the fail point and the absence of any other fail point on the flat part. A slightly different result, of quantitative nature, can also be deduced out of the proof of Theorem \ref{th_inward_max}.
 \end{remark}
 
\begin{proposition}\label{m01} Under the hypothesis of Theorem \ref{th_inward_max}, if $K$ denotes the flat part of the boundary containing a fail point, then
$$\left(\frac{2}{d\om_d}\int_{\mathbb{S}^{d-2}}\frac{1}{\rho_K(\theta)}\d \theta\right)^{-1}\geq \Vert\nabla w_\Om\Vert_\infty.$$
In particular,
$$\mathcal{H}^{d-1}(K)\geq \om_{d-1}\left(K_d \Vert \nabla w_\Om\Vert_\infty\right)^{d-1},$$
where $K_d=\frac{2\Gamma\left(\frac{d}{2}\right)}{\pi^\frac{1}{2}\Gamma\left(\frac{d-1}{2}\right)}$.
\end{proposition}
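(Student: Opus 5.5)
The plan is to rerun the proof of Theorem \ref{th_inward_max}, but to stop before the step where the radial integral in \eqref{dba11} is replaced by the crude bound $(\pi r_\vps)^{-1}$. We place $\Om\subset\R^{d-1}\times(0,+\infty)$ with the fail point at $0$ and keep the notation of that proof: $\tilde x(\vps)$, $M(\vps)$, $K_\vps$, $h_\vps$.

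Combining \eqref{eq_estimateM'(t)} with \eqref{dba10}, \eqref{dba11} and the inward maximality, exactly as in the derivation of \eqref{d103}, gives for $\vps$ small
$$\vps\ \ge\ w_\Om(\tilde x(\vps),\vps)\,\frac{2}{d\om_d}\int_{\mathbb{S}^{d-2}}\rho_{K_\vps}(\theta)^{-1}\d\theta\ \ge\ w_\Om(0,\vps)\,\frac{2}{d\om_d}\int_{\mathbb{S}^{d-2}}\rho_{K_\vps}(\theta)^{-1}\d\theta .$$
We divide by $\vps$ and use $w_\Om(0,\vps)/\vps\to\Vert\nabla w_\Om\Vert_\infty$, which holds by Lemma \ref{lem_boundary_differential} because $0$ is a fail point. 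This yields
$$\limsup_{\vps\to0}\frac{2}{d\om_d}\int_{\mathbb{S}^{d-2}}\rho_{K_\vps}(\theta)^{-1}\d\theta\ \le\ \Vert\nabla w_\Om\Vert_\infty^{-1}.$$

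Next we pass to the limit in the sets. By the last step of the proof of Theorem \ref{th_inward_max}, $\tilde x(\vps)\to0$. Hence $K_\vps$ converges in the Hausdorff sense to the flat part $K=\partial\Om\cap\{x_d=0\}$, viewed in $\R^{d-1}$ and centred at the fail point $0$. The containment $K_\vps\subset K$ is not available, since $\Om$ is convex and slices may be larger or smaller than $K$. So the step I expect to need care is showing $\liminf_\vps\rho_{K_\vps}(\theta)\le\rho_K(\theta)$, or directly the integral inequality. I would argue as follows.

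\textbf{Upper bound on the slices.} By convexity, any point $(y,\vps)\in\Om$ with $y$ far outside $K$ must leave the supporting structure. More precisely, the Hausdorff limit of the slices $\ov{\Om}\cap\{x_d=\vps\}$ is $\ov\Om\cap\{x_d=0\}=\ov K$.

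\textbf{Continuity of $\rho$.} $\rho_{K_\vps}$ converges to $\rho_K$ pointwise on $\mathbb{S}^{d-2}$, because $0$ lies in the relative interior of $K$ (the flat ball of radius $\pi^{-1}\Vert\nabla w_\Om\Vert_\infty$ is contained in $K$). Radial functions of convex bodies containing a fixed ball around the origin converge uniformly under Hausdorff convergence.

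Since all $\rho_{K_\vps}$ are then bounded below by a positive constant, $\rho_{K_\vps}^{-1}\to\rho_K^{-1}$ uniformly. Letting $\vps\to0$ gives the first inequality of the proposition.

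For the measure bound, we write $\mathcal H^{d-1}(K)=\frac1{d-1}\int_{\mathbb{S}^{d-2}}\rho_K^{d-1}\d\theta$. We then apply Jensen's inequality (power means) to the probability measure $\d\theta/|\mathbb{S}^{d-2}|$:
$$\Big(\fint\rho_K^{d-1}\Big)^{1/(d-1)}\ \ge\ \Big(\fint\rho_K^{-1}\Big)^{-1}\ \ge\ \frac{2|\mathbb{S}^{d-2}|}{d\om_d}\,\Vert\nabla w_\Om\Vert_\infty .$$
Using $|\mathbb{S}^{d-2}|=(d-1)\om_{d-1}$, this gives $\mathcal H^{d-1}(K)\ge\om_{d-1}\big(K_d\Vert\nabla w_\Om\Vert_\infty\big)^{d-1}$ with $K_d=\frac{2(d-1)\om_{d-1}}{d\om_d}$. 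A routine Gamma-function computation, with $\om_k=\pi^{k/2}/\Gamma(\frac k2+1)$, reduces this to $\frac{2\Gamma(d/2)}{\pi^{1/2}\Gamma((d-1)/2)}$.
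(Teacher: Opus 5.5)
Your proposal is correct and follows the paper's proof. You use the same chain \eqref{eq_estimateM'(t)}, \eqref{dba10}, \eqref{dba11} plus inward maximality to reach \eqref{d103_bis}, divide by $\vps$, let $\vps\to0$, and finish with a mean inequality (your power-mean inequality $M_{d-1}\ge M_{-1}$ on $\rho_K$ in place of the paper's Cauchy--Schwarz/H\"older step on $K_\vps$). Two points go beyond the paper: you justify the passage from $K_\vps$ to $K$ explicitly (via $\tilde x(\vps)\to0$, Hausdorff convergence of the slices, and uniform convergence of radial functions thanks to the flat ball around $0$), which the paper only asserts; and your normalisation $\mathcal H^{d-1}(K)=\frac1{d-1}\int_{\mathbb S^{d-2}}\rho_K^{d-1}\,\d\theta$ is the right one, whereas the paper's computation drops this factor and an equal factor in its Gamma-function identity, so its final constant still agrees with yours.
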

\begin{proof}
With the notation of the proof of Theorem \ref{th_inward_max}, we have obtained in \eqref{dba11}
$$\frac{\partial h_\vps}{\partial x_d}(0)=-\frac{\Gamma(\frac d2)}{\pi^\frac d2} \int_{\mathbb S^{d-2}} \frac{1}{\rho_{K_\vps} (\theta)}\d\theta.$$
As a consequence, using \eqref{dba10} we get
\begin{equation}\label{d102_bis}
 \frac{\partial w_{\Om_\vps}}{\partial x_d}(\tilde x(\vps), \vps))\ge \frac{\partial w_{\Om}}{\partial x_d}(\tilde x(\vps), \vps))+ w_{\Om} (\tilde x(\vps), \vps))\frac{\Gamma(\frac d2)}{\pi^\frac d2} \int_{\mathbb S^{d-2}} \frac{1}{\rho_{K_\vps} (\theta)}\d\theta.
 \end{equation}
Putting together \eqref{eq_estimateM'(t)} and \eqref{d102},
$$\frac{\partial w_{\Om_\vps}}{\partial x_d}(\tilde x(\vps), \vps))\ge \|\nabla w_\Om\|_\infty -\vps+ w_{\Om} (\tilde x(\vps), \vps))\frac{\Gamma(\frac d2)}{\pi^\frac d2} \int_{\mathbb S^{d-2}} \frac{1}{\rho_{K_\vps} (\theta)}\d\theta.$$
Since by hypothesis $\Om$ is an inward maximiser, we get
$$\|\nabla w_\Om\|_\infty \ge \|\nabla w_{\Om_\vps}\|_\infty \ge \frac{\partial w_{\Om_\vps}}{\partial x_d}(\tilde x(\vps), \vps)),$$
so
\begin{equation}\label{d103_bis}
\vps\ge w_{\Om} (\tilde x(\vps), \vps))\frac{\Gamma(\frac d2)}{\pi^\frac d2} \int_{\mathbb S^{d-2}} \frac{1}{\rho_{K_\vps} (\theta)}\d\theta.
 \end{equation}
Dividing by $\vps$ and passing to the limit, we get
$$1 \ge \|\nabla w_\Om\|_\infty \frac{\Gamma(\frac d2)}{\pi^\frac d2} \limsup_{\vps \to 0^+}  \int_{\mathbb S^{d-2}} \frac{1}{\rho_{K_\vps} (\theta)}\d\theta
$$
proving the first assertion of the Proposition.

In order to get a lower bound for $ \int_{\mathbb S^{d-2}} \frac{1}{\rho_{K_\vps} (\theta)}d\theta$ we rely on Cauchy-Schwarz and Jensen inequalities
$$ \int_{\mathbb S^{d-2}} \frac{1}{\rho_{K_\vps} (\theta)}\d\theta \ge \frac{|\mathbb S^{d-2}|^2}{\int_{\mathbb S^{d-2}} {\rho_{K_\vps} (\theta)}\d\theta }\ge
\frac{|\mathbb S^{d-2}|^\frac{d}{d-1}}{(\int_{\mathbb S^{d-2}} {\rho_{K_\vps}^{d-1} (\theta)}\d\theta)^\frac{1}{d-1} }
= \frac{|\mathbb S^{d-2}|^\frac{d}{d-1}} {\H^{d-1}(K_\vps)^\frac{1}{d-1}}.
$$
Finally
$$\H^{d-1}(K_\vps)^\frac{1}{d-1}\ge \|\nabla w_\Om\|_\infty \frac{\Gamma(\frac d2)}{\pi^\frac d2} |\mathbb S^{d-2}|^\frac{d}{d-1},$$
meaning, after taking $\vps \to 0$, that $\partial \Om$ contains a flat part (which itself contains $0$) with $(d-1)$ Hausdorff measure at least equal to
$$\Big (\|\nabla w_\Om\|_\infty \frac{\Gamma(\frac d2)}{\pi^\frac d2} |\mathbb S^{d-2}|^\frac{d}{d-1} \Big)^{d-1}.$$
Since
$$ \omega_{d-1}^\frac{1}{d-1}K_d = \frac{\Gamma(\frac d2)}{\pi^\frac d2} |\mathbb S^{d-2}|^\frac{d}{d-1} $$
we get the conclusion.

\end{proof}

\EEE

 \section{Proof of Theorem \ref{d20}}\label{sec_areaperimeterconstraint}
 Below,  $G(\Om)$ denotes   either  $|\Om|$ or $ \H^{d-1}(\partial \Om)$. Before the proof, let us briefly describe the rough idea for the symmetry.

\smallskip \noindent {\bf Steiner symmetrisation.} An axial Steiner symmetrisation does not decrease the gradient of the torsion function of $\Om$ at the point where the axis intersects the boundary,   provided the axis is aligned with a normal direction at that point. We shall prove this fact, based on results of Alvino, Trombetti,  Diaz, Lions from \cite[Theorem 1.1 and Remark 3.4]{ATDL96}. If an optimal domain is not Steiner axially symmetric and the constraint is on the perimeter, then the symmetrised domain with respect to a suitably chosen axis has a maximum stress that is at least as large, while having strictly smaller perimeter. Rescaling the symmetrised domain then yields a contradiction with optimality. In the case of a volume constraint, however, the volume is preserved by Steiner symmetrisation, so this argument does not apply directly. We expect that, for a domain that is not axially symmetric, a suitable Steiner symmetrisation should strictly increase the maximum stress. Proving this, however, would require a rigidity result for the maximum stress under Steiner symmetrisation, which is currently not available.

  \begin{proof}[Proof of Theorem \ref{d20}]
 The proof of the existence of a solution follows the direct method of the calculus of variations in any of the three situations, relying on the nondegeneracy of a maximising sequence, the compactness of the Hausdorff metric in balls and the continuity result of Theorem \ref{th_stability_maxstress}.

 \medskip
 \noindent \underline{Proof of 1.} This is a direct consequence Theorem \ref{th_inward_max} and Proposition \ref{d120}.

 \medskip
 \noindent \underline{Proof of 2.}
 Let $\Om$ be a minimiser for either $G(\Om)=\Hd(\partial \Om)$ or $G(\Om)= |\Om|$. Let $x_0$ be a point at the boundary such that $\|\nabla w_\Om\|_\infty= \lim_{x\to x_0} |\nabla w_\Om(x)|$. Without restricting the generality, we assume that $x_0=0$ and $\Om \subset \R^{d-1}\times \R_+$. Then, we know that $0$ is not a corner point and it belongs to a flat part of the boundary. Let us prove that there are no corner points on $\partial \Om$, meaning that $\partial \Om$ is of class $C^1$.

 Assume for contradiction that $y_0 \in \partial \Om$ is a corner point. Then there are two tangent hyperplanes to $\Om$ at $y_0$, say with outer normals $\nu_1$ and $\nu_2$. Let $\nu_0=\frac 12 (\nu_1+\nu_2)$, and
 let $\vps >0$.
 We denote the hyperplane $\H_\vps=\{y : (y-y_0,\nu_0) =-\vps\}$, the half spaces $\H_\vps^-=\{y : (y-y_0,\nu_0) <-\vps\}$, $\H_\vps^+=\{y : (y-y_0,\nu_0) >-\vps\}$  and
 $$\Om_\vps= \Om \cap  \H_\vps^-.$$
Let $K_\vps$ be the convex set $\H_\vps \cap \Om$ and
  denote $a_\vps = \Hd(K_\vps)$. From convexity and comparison with the cone of vertex $y_0$ and basis $K_0$, we get $|\Om \sm \Om_\vps|\ge \frac 1d \vps a_\vps$. At the same time, in view of Grisvard's results, one can use a  barrier argument with an infinite cylinder of two-dimensional basis to get  a constant $C$ depending on the diameter of $\Om$ such that
 $$w_\Om(x) \le C|x-y_0|^{\frac{\pi}{\pi-\alpha} \wedge 2},$$
 where $\alpha \in (0, \pi)$ is the angle between $\nu_1$ and $\nu_2$.

 Throughout the proof, we denote by $C$, $C_1, C_2$ constants which depend on $\Om$, the dimension of the space, the points $x_0, y_0,$ but not on $\vps$. The values of these constants may change from line to line since their effective value is not relevant. At every step, their dependencies will be pointed out.

 \medskip
 We will obtain an estimate for the variation of the gradient $L^\infty$-norms
 $\|\nabla w_\Om\|_\infty-\|\nabla w_{\Om_\vps}\|_\infty$.
 It suffices to get an estimate of $\frac{\partial h_{\vps}}{\partial x_d} (0)$, where
 $$h_\vps:\Om_\vps \to \R, h_\vps =w_\Om-w_{\Om_\vps}.$$
We consider the larger domain denoted $U_\vps\supseteq \Om_\vps$ which is obtained by intersecting $\R^{d-1}\times \R_+$ with $\H_\vps^-$ and $\{-diam (\Om) +\alpha_i (\vps) <x_i<diam (\Om)+\beta_i(\vps)\}$, for $i=1, \dots, d$. Here $\alpha_i, \beta_i$ are close to zero, chosen such that all $U_\vps$ are homothetic to $U_0$, obtained when $\vps=0$ in which case $\alpha_i(0)=\beta_i(0)=0$. Then,
 $$0<\frac{\partial h_{\vps}}{\partial x_d} (0)\le C\vps^{\frac{\pi}{\pi-\alpha} \wedge 2} \frac{\partial H_{\vps}}{\partial x_d} (0),$$
 where
 $$-\Delta H_\vps =0 \mbox{ in } U_\vps, H_\vps= 1_{K_\vps} \mbox{ on } \partial U_\vps.$$
 Denoting by $G_\vps$ the Poisson kernel in $U_\vps$, we get
 $$\frac{\partial H_{\vps}}{\partial x_d} (0)= \int_{K_\vps} \frac{\partial G_\vps}{\partial x_d}  (0,y) \d\sigma_y \le \max_{y\in K_\vps}  \Big |\frac{\partial G_\vps}{\partial x_d} (0,y)\Big | a_\vps.$$
  There exists some $\vps_0,C$ such that for all $ \vps<\vps_0,$
 $$ \forall y \in K_{\vps}, \;  \Big |\frac{\partial G_\vps}{\partial x_d} (0,y)\Big | \le C.$$
 Consequently, for the volume constraint, we have that

 $$\frac{\|\nabla w_{\Om_\vps}\|_\infty}{|\Om_\vps|^\frac 1d}\ge   \frac{\|\nabla w_{\Om}\|_\infty- C  \vps^{\frac{\pi}{\pi-\alpha} \wedge 2}a_\vps }{\left(|\Om|-\frac 1d \vps a_\vps\right)^\frac 1d}.$$
For $\vps>0$ small, the right-hand side is strictly larger than $\frac{\|\nabla w_{\Om}\|_\infty } {|\Om|^\frac 1d}$, contradicting the optimality of $\Om$ for the volume constraint.

 The case of the perimeter is similar. We have
 $$\frac{\|\nabla w_{\Om_\vps}\|_\infty}{\Hd(\partial \Om_\vps)^\frac 1{d-1}}\ge   \frac{\|\nabla w_{\Om}\|_\infty- C_\Om \vps^{\frac{\pi}{\pi-\alpha} \wedge 2}a_\vps }{(\Hd(\partial \Om)+a_\vps -\Hd (\partial \Om \sm \partial \Om_\vps))^\frac 1{d-1}}.$$
The proof can be achieved in a similar way as for the volume constraint, provided the following inequality holds true
 \begin{equation}\label{d50}
 \Hd (\partial \Om \sm \partial \Om_\vps) -a_\vps \ge C a_\vps,
 \end{equation}
for a constant $C$ independent of $\vps$ (possibly depending on $\Om$ and the dimension).

 In order to prove \eqref{d50}, we identify $\H_\vps$ with $\R^{d-1}$ and we assume that  the origin $0$ of this system of coordinates in $\R^{d-1}$ is the centre of the John ellipsoid $E_\vps \sq K_\vps \sq (d-1) E_\vps$ of $K_\vps$, aligned with the axes. We can assume that the semiaxes of $E_\vps$ satisfy
 $$a_{1, \vps} \ge a_{2, \vps} \ge \dots \ge a_{d-1, \vps}.$$
 Then, for two constants $C_1, C_2$ depending on $\Om$ and on the dimension, we have
 $$C_1 \vps \le a_{d-1, \vps} \le C_2 \vps.$$
 Assume $y_0'$ is the projection of $y_0$ onto $\H_\vps$. We see
 the boundary of $\Om$ as a graph over $K_\vps$ associated to the function $f_\vps$. Then, as a consequence of the concavity of $f_\vps$, of the fact that it vanishes at the boundary of $K_\vps $,  and of the inclusion properties of the John ellipsoid, we get
 $$\vps \ge f_\vps(0) \ge \frac{\vps}{2(d-1)}.$$
 Let us denote $g_\vps(x)= f_\eps(0) (1-\rho_{K_\vps} (x))$, where $\rho_{K_\vps}:K_\vps \to \R_+$ is the gauge function of the convex set $K_\vps$.  Then, $g_\vps$ is concave on $K_\vps$, vanishes on its boundary and $g_\vps \le f_\vps$.
 Consequently,
 $$\Hd (\partial \Om \sm \partial \Om_\vps) -a_\vps \ge \int_{K_\vps} \sqrt{1+|\nabla g_\vps|^2} \d x -\int_{K_\vps} 1 \d x= \int_{K_\vps} \frac{|\nabla g_\vps|^2}{\sqrt{1+|\nabla g_\vps|^2} +1} \d x.$$
 Let us introduce the map
 $$T_\vps: \R^{d-1}\to \R^{d-1}, T_\vps (x_1, \dots, x_{d-1}) = (\frac{x_1}{a_{1, \vps}}, \dots, \frac{x_{d-1}}{a_{d-1, \vps}})$$
 and build the blow-up function $u_\vps : T_\vps(K_\vps) \to \R$ defined by
 $$u_\vps(T_\vps(x))= g_\vps(x).$$
 By direct computation we have that
 $$\int_{K_\vps} \frac{|\nabla g_\vps|^2}{\sqrt{1+|\nabla g_\vps|^2} +1} \d x= \int_{T_\vps(K_\vps)} \frac{\ds \sum_{i=1}^{d-1} \frac{1}{a_{i,\vps}^2}\Big(\frac{\partial u_\vps}{\partial x_i}\Big)^2}{\sqrt{1+\ds \sum_{i=1}^{d-1}  \frac{1}{a_{i,\vps}^2}\Big(\frac{\partial u_\vps}{\partial x_i}\Big)^2} +1} \ds \prod_{i=1}^{d-1}  a_{i,\vps}\d x.$$
 Since
 $$B_1 \sq T_\vps(K_\vps) \sq (d-1) B_1,$$
 we get, using a barrier argument at every point, that
 $$\|\nabla u_\vps \|_\infty\le f_\vps(0).$$
 Consequently,
 $$\int_{K_\vps} \frac{|\nabla g_\vps|^2}{\sqrt{1+|\nabla g_\vps|^2} +1} \d x\ge \frac{\Hd(E_\vps)}{\om_{d-1}}
 \int_{T_\vps(K_\vps)} \frac{ \frac{1}{a_{d-1,\vps}^2}\Big(\frac{\partial u_\vps}{\partial x_{d-1}}\Big)^2}{\sqrt{1+\ds \sum_{i=1}^{d-1}  \frac{1}{a_{i,\vps}^2} f_\vps^2(0)} +1}\d x.$$
Moreover,
 $$\frac{ f_\vps^2(0)}{a_{i,\vps}^2}\le \frac{\vps^2}{a_{i,\vps}^2} \le \frac{a_{d-1,\vps}^2}{C_1^2 a_{i,\vps}^2}\le \frac{1}{C_1^2}$$
and
 $$ \frac{1}{a_{d-1,\vps}^2 }\ge \frac{1}{C_2^2 \vps^2} \ge  \frac{1}{C_2^2 f_\vps^2(0) 4 (d-1)^2}.$$
 Denoting by $C$ a constant depending only on $\Om$ and the dimension, we get
  $$\int_{K_\vps} \frac{|\nabla g_\vps|^2}{\sqrt{1+|\nabla g_\vps|^2} +1} dx\ge C \Hd(E_\vps) \int_{T_\vps(K_\vps)} \Big(\frac{\partial U_\vps}{\partial x_{d-1}}\Big)^2 dx,$$
 where $U_\vps(x)= 1- \rho_{T_\vps(K_\vps)}(x)$, by $ \rho_{T_\vps(K_\vps)}$ denoting the gauge function of $T_\vps(K_\vps)$. In particular $\|\nabla U_\vps\|_\infty \le 1$.

 We claim that
$$ \int_{T_\vps(K_\vps)} \Big(\frac{\partial U_\vps}{\partial x_{d-1}}\Big)^2 \d x\ge C>0,$$
where $C$ is a dimensional constant.  Indeed, by contradiction, if there exist convex sets $A_n$, $B_1 \sq A_n\sq (d-1) B_1$, such that
$ \int_{A_n} \Big(\frac{\partial U_{A_n}}{\partial x_{d-1}}\Big)^2 \d x \to 0$, then for  a subsequence converging in Hausdorff metric to $A$, we get
$ \int_{A} \Big(\frac{\partial U_{A}}{\partial x_{d-1}}\Big)^2 \d x = 0$, which is impossible since $A \not = \emptyset$.

This implies \eqref{d50}  since $\Hd (E_\vps) \ge \frac 1{(d-1)^2} a_\vps$.

  \medskip
 \noindent \underline{Proof of 3.}
 We  consider the axial Steiner symmetrisations of $\Om$ around the axis $\R\ov e_d$. Note that $G(\Om^*) \le m$ for the perimeter, while equality occurs for the volume. If this inequality was strict, a homothethy of factor larger than 1 can satisfy the constraint with equality, while the maximum stress is strictly increasing. In such a way we build a new axially symmetric domain $\Om^*$ for which the gradient at $0$ is not smaller than $|\nabla w_{\Om}(0)|$. This last assertion is a consequence of the main result from \cite[Theorem 1.1, assertion (1.12)]{ATDL96} via the characterisation of the boundary gradient from Theorem \ref{th_existence_failpoint}.

Since $\Om$ was a maximiser, this implies that $G(\Om^*) = m$ and that $\|\nabla w_{\Om^*}\|_\infty = c$, so that $\Om^*$ is another maximiser.

 \medskip
 \noindent \underline{Proof of 4. (i)} Assume $\Om$ is optimal for the  perimeter constraint. We know that $\Om^*$ is also optimal and that $\Hd (\partial \Om^*)= \Hd(\partial \Om)$. Indeed, in general, $\Hd (\partial \Om^*)\le \Hd(\partial \Om)$. However, in our case the inequality cannot be strict. Otherwise, take a homothety of $\Om^*$ of ratio larger than $1$ to saturate the perimeter condition. Then the maximum stress would strictly increase contradicting the optimality of $\Om*$. Consequently, $\Hd (\partial \Om^*)= \Hd(\partial \Om)$. Relying on the stability result \cite[Theorem 1.5]{BCF13}, we get that $\Om=\Om^*$. Now, because of the radial symmetry, the only possible flat part in the boundary of $\Om^*$  is perpendicular to $\ov e_d$, so there are at most two of them, among which one is the bottom.

 \medskip
 \noindent \underline{Proof of 4. (ii)} Assume $\Om_0$ and $\Om_1$ are maximisers in $\R^{d-1}\times \R_+$, axially symmetric about $\R \ov e_d$, with $0$ at the boundary   at a point where the maximum of the gradient is attained. The value of both gradients  is assumed to be $c$. As a consequence of  \cite[Theorem 20]{Co05}, for the point $(0', \delta)$ we have that
 $$[w_{(1-t) \Om_0+t\Om_1} (0', \delta)]^\frac 12\ge (1-t)[w_{  \Om_0 } (0', \delta)]^\frac 12+ t [w_{ \Om_1} (0', \delta)]^\frac 12$$
 which gives, after dividing by $\delta^\frac 12$ and passage to the limit,
 $$\frac{\partial w_{(1-t) \Om_0+t\Om_1} }{\partial x_d}(0', \delta)\ge c.$$
This proves (ii) since the perimeter (in $\R^2$)  of $(1-t) \Om_0+t\Om_1$ is not more than $m$.
 \end{proof}

  \begin{remark}\label{d65}\rm
 Assume the value of $m$ in Theorem \ref{d20} is respectively $|B_1|$, $\H^{d-1}(\partial B_1)$. Then  the radius of the flat disk centred at the fail point for the corresponding maximisation problem is at least
 $(\pi d)^{-1}$. Indeed, we know that  the optimal set $\Om$ is not a worse competitor than the ball of radius $1$, so that  $\|\nabla w_\Om\|_\infty \ge \frac 1d$. Theorem \ref{d20} implies the assertion.
\end{remark}

\begin{remark} \rm In case of the volume constraint, we can not prove that all solutions have to be Steiner symmetric. To do so, we would need a rigidity result in the argument based on \cite[Theorem 1]{ATDL96}. This is  somehow related to a maximum principle of Hopf type for a degenerate elliptic operator. There are intuitive arguments in favour of the validity of such a result. However a proof requires deep analysis.
  \end{remark}

 \section{Further remarks: various constraints, quantitative estimates and open problems}\label{sec_furtherremarks}
 In this section we discuss the maximisation of the maximal stress under different constraints. We prove existence and several geometric characterisations for maximisers of the maximum stress under diameter, circumradius, torsional rigidity and principal Dirichlet eigenvalue constraints. All solutions for these problems are inward maximisers for the maximum stress, as a consequence of Proposition \ref{d120}.

 \noindent{\bf The diameter constraint.} Let us assume that $G(\Om)= diam(\Om)$.

 \begin{prop}\label{d20.d}

 Let $m>0$. The following problem
 \begin{equation}\label{d43.d}
 \max\{\|\nabla w_\Om\|_\infty : \Om \subset \R^d, \Om \mbox{ open and convex}, diam(\Om)=m\}
 \end{equation}
 has at least one solution say  $\Om$. Assume, up to a rigid movement, that $\Om \subset \R^{d-1} \times \R_+$ and  $0 \in \partial \Om$ is a fail point.
Then there exists a body of constant width $\mathcal C,$ axially symmetric about the axis $\R\ov e_d,$ such that  $\Om=\mathcal C \cap [\R^{d-1}\times \R_+]$ and  $\mathcal C \cap [\R^{d-1}\times \R_-]$ is a spherical cap.
Moreover, if $\Om_0$ and $\Om_1$ are two solutions as above, then for every $t \in (0,1)$ their Minkowski sum
 $(1-t) \Om_0+t\Om_1$ is also a solution.
  \end{prop}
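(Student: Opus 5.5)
Existence follows the same direct method as in Theorem \ref{d20}. Take a maximising sequence with $diam(\Om_n)=m$. Its width is bounded below, since $\|\nabla w_{\Om_n}\|_\infty^2\le 2\|w_{\Om_n}\|_\infty$ by \eqref{d00}, and small width would force the stress to zero. Blaschke selection then gives a Hausdorff limit $\Om$, and the convergence also holds in $H^c$. The diameter is continuous, and Theorem \ref{th_stability_maxstress} gives convergence of the stress, so $\Om$ is a maximiser. By Proposition \ref{d120}, $\Om$ is an inward maximiser. Place $\Om\subset\R^{d-1}\times\R_+$ with the fail point at $0$ and outer normal $-\ov e_d$.

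\emph{Symmetry and the cap.} Apply axial Steiner symmetrisation about $\R\ov e_d$. By \cite[Theorem 1.1]{ATDL96} together with item 2 of Theorem \ref{th_existence_failpoint}, this does not decrease $|\nabla w(0)|$. One must check that the diameter does not increase. This fails for Steiner symmetrisation in general, so I would instead use a competitor-enlargement argument.

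\emph{Constant-width completion.} Any convex set of diameter $m$ is contained in a body of constant width $m$, its completion. The aim is to show that $\Om$ equals the intersection of such a completion $\mathcal C$ with the half-space $\{x_d>0\}$. Let $\mathcal C\supset\Om$ be a completion chosen so that its part in $\{x_d>0\}$ is maximal. Then $\mathcal C\cap\{x_d>0\}$ is a convex set of diameter $m$ containing $\Om$. By the maximum principle its torsion function dominates $w_\Om$. Since both vanish on $\{x_d=0\}$ near $0$ (the flat part given by Theorem \ref{th_inward_max}), its boundary gradient at $0$ is at least $|\nabla w_\Om(0)|$. So the chord domain is also a maximiser, and equality forces $\Om$ to equal it, by the strong maximum principle plus Hopf's lemma applied to the difference at $0$. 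Conversely, $\Om$ is then a chord domain of $\mathcal C$, cut by the hyperplane through the flat part.

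\emph{The cap below.} For the part $\mathcal C\cap\{x_d<0\}$, only the flat face $F=\ov\Om\cap\{x_d=0\}$ matters. Maximality of $\Om$ suggests taking $\mathcal C$ to be the completion for which the portion above $\{x_d=0\}$ is as large as possible. The natural choice is to complete $\ov\Om$ by adding the intersection of the balls $B_m(y)$, $y\in\ov\Om$, on the lower side. Where the face is flat, the diametral partners of points near $0$ then lie on a sphere of radius $m$ centred on the axis, which gives the spherical cap. Axial symmetry would come from uniqueness of this construction together with the symmetrisation step, if the latter can be justified. Otherwise I would obtain symmetry by a rotation-averaging argument, replacing Steiner symmetrisation with Minkowski symmetrisation, which preserves the diameter.

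\emph{Minkowski sums.} Width is additive under Minkowski combination, so the diameter of $(1-t)\Om_0+t\Om_1$ is at most $m$. The power-concavity result \cite[Theorem 20]{Co05} at $(0',\delta)$, after dividing by $\delta^{1/2}$ and letting $\delta\to0$ exactly as in Theorem \ref{d20} 4(ii), gives a boundary gradient at least $c$. Rescaling up to diameter $m$ would then strictly improve the stress unless the diameter is already $m$, so the combination is a solution.

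\emph{Main obstacle.} I expect the hardest step to be the rigorous identification of the lower part as a spherical cap together with axial symmetry. Steiner symmetrisation is not diameter-decreasing, so the symmetrisation of \cite{ATDL96} must be reconciled with the diameter constraint. I would try Minkowski symmetrisation first.
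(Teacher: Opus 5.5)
Your existence argument, the identification $\Om=\mathcal C\cap\{x_d>0\}$ via a completion $\mathcal C$ plus Hopf at the flat face, and the Minkowski-sum step (Colesanti at $(0',\delta)$ and $\mathrm{diam}((1-t)\Om_0+t\Om_1)\le m$) are fine. They match the paper, the last one being the argument of Theorem~\ref{d20}~4(ii). The gap is in the core of the statement: the spherical cap and the axial symmetry of $\mathcal C$.

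It starts from a false premise. Steiner symmetrisation does \emph{not} increase the diameter; this is the classical step in the proof of the isodiametric inequality. Neither does the axial one, for instance because it is a Hausdorff limit of Steiner symmetrisations in hyperplanes containing $\R\ov e_d$ \cite{Vo16}. The paper relies on exactly this. Since $\Om^*$ and $\mathcal C^*$ have diameter at most $m$, \cite{ATDL96} and Theorem~\ref{th_existence_failpoint} make $\Om^*$ again a maximiser. Hence $\Om^*=\mathcal K\cap\{x_d>0\}$ for some axially symmetric body $\mathcal K$ of constant width $m$.

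Your substitute argument for the cap does not work. A body of constant width is strictly convex, so the relative interior of the flat face consists of interior points of $\mathcal C$. Thus ``diametral partners of points near $0$'' are not defined, and ``centred on the axis'' presupposes the symmetry you are trying to prove. The diametral relation that matters is between the highest point and the rim of the face.

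If the top point $X^*$ of $\Om^*$ were at distance $<m$ from all of $\ov{\Om^*}$, you could add a small cap above $X^*$ without exceeding diameter $m$. By Hopf, this would strictly increase the stress at $0$. So $X^*$ is at distance $m$ from $\partial V_0$, where $V_0=\ov{\Om^*}\cap\{x_d=0\}$.

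You then transfer this to $\Om$. Its highest point $X$ has the same height as $X^*$, and $\ov\Om\subset B(X,m)$. The flat face of $\Om$ has the same $\Hd$-measure as $V_0$, yet lies in $B(X,m)\cap\{x_d=0\}$, which is a ball congruent to $V_0$. Hence the face is that ball, and $\mathcal C\cap\{x_d<0\}$ is the cap of $\partial B(X,m)$.

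You also need a rigidity step, because the statement says \emph{every} maximiser is axially symmetric. In the paper, the cap is invariant under symmetrisation, so $\mathcal C^*=\mathcal K$ has constant width $m$. Therefore $\mathcal C$ and $\mathcal C^*$ have the same mean width. The equality case for the mean width along the Steiner symmetrisations approximating the axial one \cite{Vo16} then forces $\mathcal C$ to be symmetric.

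Your fallback, Minkowski (rotation-average) symmetrisation, cannot deliver this. Suppose you grant that it does not decrease the stress; this does follow from Colesanti's inequality at $(0',\delta)$, a point fixed by the rotations. It nevertheless preserves the mean width, so there is no strictly monotone quantity to exploit. At best you obtain the existence of \emph{some} axially symmetric maximiser, not the symmetry of the given $\Om$.
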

\begin{proof}
Existence of a solution follows using the direct method of the calculus of variations.
Assume $\Om \subset \R^{d-1} \times \R_+$ is such a solution, and that $0$ is a fail point. Relying on \cite[Corollary 3.2.7]{Sc14}, we know that $\Om$ is contained in a convex set $\mathcal C$ of constant width equal to the diameter. Then the set $\mathcal C \cap [\R^{d-1} \times \R_+]$ satisfies the diameter constraint and contains $\Om$. Since $\Om$ is optimal
 $\Om= \mathcal C\cap [\R^{d-1} \times \R_+]$ and, we get that $\Om^* \sq  \mathcal C^*\cap [\R^{d-1} \times \R_+]$. On the other hand, the set $\mathcal C^*$ has a diameter not larger than $m$ and it is itself contained in a convex set of constant width equal to $m$, say $\mathcal  K$, which can be chosen to be axially symmetric. Clearly, $\Om^*$ has to coincide with $\mathcal K\cap [\R^{d-1} \times \R_+]$ (otherwise it would not be optimal) hence it coincides with $ \mathcal C^*\cap [\R^{d-1} \times \R_+]$.

 Let us denote by $V_0$ the $(d-1)$-dimensional ball $\partial \Om^* \cap [\R^{d-1}\times \{0\}]$ and observe that the point $X^*$ of maximal height in $\Om^*$ has to be at distance $m$ from any point of $\partial_{\R^{d-1}\times \{0\}} V_0$. Indeed, if this would not be the case, then the width constraint in $\Om^*$ would not saturate at $X^*$, so we could find a larger domain than $\Om^*$ without exceeding the diameter constraint. This implies that there exists a point $X$ at the same height in $\Om$. Taking now the ball centred at $X$ of radius $m$, this ball should contain $\Om$, while its intersection with $\R^{d-1}\times \{0\}$ is a ball congruent with $V_0$. Consequently,  $\partial \Om \cap [\R^{d-1}\times \{0\}] \equiv V_0$ and the body $C$ of constant width $m$ has a boundary in $\R^{d-1}\times \R_-$ which is a spherical cap on $\partial B(X, m)$. This is invariant under Steiner symmetrisation, so $C^*=\mathcal K$. Consequently, we are in the equality case for the mean width  under axial Steiner symmetrisation. But, there exists a specific sequence of hyperplanes containing the line $\R \ov e_d$ such that an iterative sequence of Steiner symmetrisations with respect to these hyperplanes are converging to the axial one \cite{Vo16}.  Since at each step the mean width can not increase and equality occurs only if the set is already symmetric with respect to the hyperplane,  we get that  $\Om$ is congruent to $\Om^*$.
 \end{proof}

\smallskip
 \noindent{\bf The circumradius constraint.} Below we consider the case $G(\Om)= R(\Om)$, the radius of the smallest ball containing $\Om$.
\begin{prop}\label{the7}
  Let $d\ge 2$. The variational problem
\begin{equation}\label{e27p}
c:=\sup\{\|\nabla w_{\Om}\|_{\infty}:\Om\, \textup{ non-empty, open, bounded, convex},\,\Om\subset \R^d,\, R(\Om)=m\},
\end{equation}
has a chorded ball with radius $m$ as a maximiser. Moreover,
\begin{itemize}
\item [(i)]
$$
c\ge \frac md\Big(1+\frac{K_d^2}{4}\Big)^{1/2},
$$
where $K_d$ is the constant of Proposition \ref{m01},
$$
K_d  =\frac{2\Gamma(\frac d2)}{\pi^\frac 12 \Gamma(\frac{d-1}2) }.
$$
In particular for $d=2$
$$c>0.5247m \, .$$
The radius $r_m$ of the chord of the maximiser satisfies
$$
r_m\ge \frac{m}{d}K_d\Big(1+\frac{K_d^2}{4}\Big)^{1/2}.
$$
\item [(ii)] If $d=2$, then
\begin{equation*}
c< 0.6404\,m.
\end{equation*}
\end{itemize}

\end{prop}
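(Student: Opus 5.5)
Existence and the chord-ball structure come first. By Proposition \ref{d120}, with $G=R$ nondecreasing under inclusion and $1$-homogeneous, every maximiser is an inward maximiser. Take a maximising sequence normalised so that each set lies in a ball of radius $m$ centred at the origin. By Blaschke selection and Theorem \ref{th_stability_maxstress} we get a limit $\Om$. Nondegeneracy holds because the ball itself gives the value $m/d$, so by \eqref{d00} the sequence cannot collapse. Hence $\Om$ is a maximiser, and $R(\Om)\le m$; rescaling up if necessary we may take $R(\Om)=m$.

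Let $B$ be the circumscribed ball of radius $m$ and let $x$ be a fail point of $\Om$. Let $H$ be the tangent half-space to $\Om$ at $x$, and set $\Om'=B\cap H$. Then $\Om\subset\Om'$ and $R(\Om')\le m$. The domain monotonicity of $w$ gives $w_{\Om}\le w_{\Om'}$, and both vanish at $x\in\partial\Om'$. By item 2 of Theorem \ref{th_existence_failpoint}, the normal derivative satisfies $|\nabla w_{\Om'}(x)|\ge|\nabla w_\Om(x)|$. So the chorded ball $\Om'$ is again a maximiser (after a possible rescaling, which can only help).

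For (i), I use Theorem \ref{th_inward_max} and Proposition \ref{m01} on the chorded ball $\Om=B_m\cap\{x_d>-h\}$, whose chord has radius $r=\sqrt{m^2-h^2}$. Let $c=\|\nabla w_\Om\|_\infty$; comparison with $B_m$ gives $c\ge m/d$. Proposition \ref{m01}, with $K$ a $(d-1)$-ball of radius $r$, so that $\rho_K\le r$ from the fail point, gives $r\ge K_d c$. I then need a second inequality linking $c$ to the height. Let $P$ be the chord point and $x_d$ the distance from the chord. Compare $w_\Om$ from below near $P$ with $(m^2-|y|^2)/(2d)$, which is positive and lies below $w_{B_m}$, corrected by a harmonic term vanishing on the chord. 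The cleanest route is the barrier
$$\frac{m^2-|y|^2}{2d}-\frac{m^2-h^2}{2d}\cdot(\text{harmonic correction}).$$
Simpler still is to note that $w_{B_m}-w_\Om$ is harmonic, and then use \eqref{dba10}-type Poisson kernel estimates as in the proof of Theorem \ref{th_inward_max}. The outcome should be $c^2\ge \frac{h^2}{d^2}+\frac{r^2}{4d^2}\cdot(\dots)$. My plan is to obtain $c\ge \frac{1}{d}\sqrt{h^2+r^2/4}$, or an analogous inequality, and combine it with $r\ge K_d c$, so that the bound $c\ge \frac md(1+K_d^2/4)^{1/2}$ follows after optimising in $h$. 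The bound for $r_m$ is then $r_m\ge K_d c$. The numerical value for $d=2$ follows from $K_2=2/\pi$. This barrier step is the main obstacle: I would first try an explicit subsolution given by the ball torsion minus a multiple of the harmonic measure of the cap, evaluated at the foot of the chord.

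For (ii), I would use Payne's inequality \eqref{d00}, $c^2\le 2\|w_\Om\|_\infty$. I would bound $\|w_\Om\|_\infty$ for a chorded disc by comparison with a strip or an explicit supersolution, such as $w_{B_m}$ restricted, improved by the presence of the chord. The numerical constant $0.6404$ would be obtained by optimising over $h$ an explicit supersolution of the form $\min$ of the disc torsion and the strip torsion of the appropriate width.
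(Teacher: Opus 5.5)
\textbf{Existence.} Your existence argument is correct and rests on the same idea as the paper. The paper replaces each $\Om_n$ of a maximising sequence by $\H_n^+\cap B_m$, the tangent half-space at a fail point intersected with $B_m$, and then passes to the limit. You first extract a limit maximiser and then replace it by $B\cap H$. Both routes use only $w_\Om\le w_{\Om'}$ and the boundary-slope characterisation of Theorem \ref{th_existence_failpoint}. The two quantitative parts, however, have genuine gaps.

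\textbf{Part (i).} The bound $c\ge\frac md(1+K_d^2/4)^{1/2}$ is a lower bound on a supremum, so it must come from evaluating a family of competitors. Information about the maximiser, such as $r\ge K_dc$ from Proposition \ref{m01}, cannot produce it: together with $h^2+r^2=m^2$ it only bounds $h$ from above. That information is needed only for the bound on $r_m$.

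You name the right tool, namely harmonicity of $w_{B_m}-w_\Om$ and the Poisson-kernel estimate \eqref{dba10}--\eqref{dba11}. But you do not carry it out, and the outcome you predict is wrong: $\frac1d\sqrt{h^2+r^2/4}\le\frac md$, which gives nothing beyond the ball.

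The missing computation is Theorem \ref{the5} applied to the full ball. Take $D=B_m((0',m))$ and $D_\delta=D\cap\{x_d>\delta\}$, with $h=m-\delta$ and $r^2=m^2-h^2$. On the chord, $w_D$ is maximal at the centre $P=(0',\delta)$, where $w_D(P)=r^2/(2d)$ and $\partial_{x_d}w_D(P)=h/d$. Let $g$ be the half-space Poisson extension of the indicator of the chord. By the maximum principle $w_D-w_{D_\delta}\le w_D(P)\,g$ in $D_\delta$, with equality at $P$. Moreover $-\partial_{x_d}g(P)=\frac{\Gamma(d/2)}{\pi^{d/2}}\int_{|y'|>r}|y'|^{-d}\,\d y'=K_d/r$. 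This gives
\[
c\ \ge\ \partial_{x_d}w_{D_\delta}(P)\ \ge\ \frac hd+\frac{r^2}{2d}\cdot\frac{K_d}{r}=\frac1d\Big(h+\frac{K_d}{2}\,r\Big).
\]
This is linear, not quadratic, in $(h,r)$. Maximising over $h^2+r^2=m^2$ by Cauchy--Schwarz yields $\frac md(1+K_d^2/4)^{1/2}$.

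Only after this does $r_m\ge K_dc$ give the chord bound. Your claim that $\rho_K\le r$ from the fail point requires the fail point to be the chord centre. This holds by rotational symmetry and the uniqueness part of Theorem \ref{th_inward_max}. Alternatively, use the position-free area form $\mathcal H^{d-1}(K)\ge\om_{d-1}(K_dc)^{d-1}$ of Proposition \ref{m01}.

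\textbf{Part (ii).} Payne's inequality cannot reach $0.6404\,m$; take $m=1$. For the chorded disc at distance $h$ from the centre, $w_{B_1}-w_\Om$ is harmonic and bounded on $\partial\Om$ by $\max_{\rm chord}w_{B_1}=(1-h^2)/4$. Hence $\|w_\Om\|_\infty\ge h^2/4$, and the bound $\sqrt{2\|w_\Om\|_\infty}$ is at least $h/\sqrt2$. This exceeds $0.6404$ as soon as $h>0.906$.

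Taking the minimum with a strip torsion does not help in this range, since the strip width is then close to $2$. Nothing in your argument excludes a maximiser with such $h$: the constraint $r\ge K_2c\ge0.334$ only gives $h\le0.943$. Moreover, the near-optimal competitor in (i) has $h\approx0.95$. Payne's inequality is simply not sharp for disc-like sets: for the disc it gives $0.707$ instead of $1/2$.

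The paper proceeds differently, deriving a self-consistent bound on $g(\vps)$, the gradient at the chord midpoint, which is the maximiser's fail point. The steps are:
\begin{enumerate}
\item Compare $B(\vps)$ with the largest inscribed disc $C(\vps)$ tangent at the midpoint.
\item Use the strong Markov property at the exit from $C(\vps)$, i.e. the Poisson kernel of $C(\vps)$. This bounds the normal derivative by $\frac12(1-\frac\vps2)$ plus an integral of $w_{B(\vps)}$ over $\partial C(\vps)$.
\item Bound that integrand by $g(\vps)$ times the distance to the chord on the arc facing the chord, and by the disc torsion elsewhere.
\item Solve the resulting linear inequality for $g(\vps)$ and maximise numerically in $\vps$.
\end{enumerate}
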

\medskip
\noindent \textit{Proof of}(i)  Let $(\Om_n)$ be a maximising sequence, $\Om_n \sq B_m$. For each $n\in \N$ there exists a point $P_n\in \partial \Om_n$ such that
$\|\nabla w_{\Om_n}\|_{\infty}=|\nabla w_{\Om_n}|(P_n).$ Let $\H_n$ be the hyperplane tangent to $\partial \Om_n$ at $P_n$. Let $\H_n^+$ be the half-space with boundary $\H_n$ which contains $\Om_n.$
 Since  $|\nabla w_{\Om}|(p_n)\le w_{\H_n^+\cap B_{R(\Om_n)}}$, we have, by monotonicity,
$|\nabla w_{\Om_n}|(P_n)\le |\nabla w_{\H_n^+\cap B_{m}}|(P_n)$.
This means that $(\H_n^+\cap B_{m})_n$ is also a maximising sequence which, up to extraction of a subsequence, converges to a chorded ball in the Hausdorff metric. The continuity result, Theorem \ref{th_stability_maxstress}, concludes the proof of the existence part.

Without loss of generality, we put $m=1$ and $\Om=D$, the unit disc centred at $(0',1)$. We have that
\begin{equation}\label{g18}
w(x',x_d)=(1-|x'|^2-(x_d-1)^2)/(2d).
\end{equation}
Hence
 \begin{equation*}
\|\nabla w_{D}\|_{\infty}=|\nabla w_D|(0',0)=\frac1d.
\end{equation*}
Let $D_{\delta}=\{x\in D:x_d>\delta\}$.
From Theorem \ref{the5} with $\Om=D$ and \eqref{g18}, we get that
 \begin{equation}\label{g22}
\|\nabla w_{D_{\delta}}\|_{\infty}\ge \frac{1}{d}(1-\delta)+K_d\frac{1}{2d}(2\delta-\delta^2)^{1/2}.
\end{equation}
Taking the supremum over all $\delta\in (0,1)$ yields that
the right-hand side of \eqref{g22} is maximal for
$$
\delta_d=1-\frac{2}{(4+K_d^2)^{1/2}}.
$$
Hence
\begin{equation*}
c\ge \|\nabla w_{D_{\delta_d}}\|_{\infty}\ge \frac1d\Big(1+\frac{K_d^2}{4}\Big)^{1/2}.
\end{equation*}
The lower bound for $r_m$ follows from Theorem \ref{the5}(iii) below.
\smallskip

\noindent \textit{Proof of} (ii) The proof relies on a probabilistic representation of the torsion function. The torsion function plays a key role in probability theory. Let $(B(s),\,s\ge 0, \Pa_x,\,x\in \R^d)$
 be Brownian motion with generator $\Delta$, and let
\begin{equation*}
T(\Om)=\inf\{s\ge 0: B(s)\not\in \Om\}
\end{equation*}
be the lifetime of Brownian motion in $\Om$. Then
\begin{equation*}
\Pa_x[T_{\Om}>t]=v_{\Om}(x;t),
\end{equation*}
where $v_{\Om}$ is the unique solution of
\begin{equation}\label{e7}
\Delta v=\frac{\partial v}{\partial t},\,x\in \Om,\,v\mid_{\partial\Om}=0,\,\,t>0,
\end{equation}
with initial condition $v(x;0)=1,\,x\in\Om.$

The expected value of the random variable $T(\Om)$ is denoted and given by
\begin{equation}\label{e8}
\mathbb{E}_x[T(\Om)]=\int_{\R^+}\d t\,\Pa_x[T(\Om)>t]=\int_{\R^+}\d t\,v_{\Om}(x;t).
\end{equation}
The right-hand side of \eqref{e8} is a representation for the torsion function $w_{\Om}$.

\smallskip

We let
\begin{equation*}
 B(\varepsilon)=\Big\{x\in\R^2: \Big(x_1-\frac{\varepsilon}{2}\Big)^2+x_2^2<1,\,x_1<1\Big\}
\end{equation*}
be a chorded unit disc,
and let $C(\varepsilon)$ be the largest disc in $B(\varepsilon)$ given by
\begin{equation*}
C(\varepsilon)=\Big\{x\in\R^2: |x|<1 -\frac{\varepsilon}{2}\Big\}.
\end{equation*}
Note that the tangent planes of $B(\varepsilon)$ and $C(\varepsilon)$ at the point $(1-\frac{\varepsilon}{2})$ coincide. Let $x(a)=(1-\frac{\varepsilon}{2}-a,0)$, where $a>0$ is small.
We have that
\begin{equation}\label{g30}
\Pa_{x(a)}[T_{B(\varepsilon)}>t]=\Pa_{x(a)}[T_{C(\varepsilon)}>t]+\Pa_{x(a)}[T_{C(\varepsilon)}\le t<T_{B(\varepsilon)}].
\end{equation}
Let $V(x(a),1-\frac{\varepsilon}{2},\tau,\theta)$ be the joint probability density of Brownian motion exiting at time $\tau$ in time interval $\d\tau$, and at $((1-\frac{\varepsilon}{2})\cos \theta,(1-\frac{\varepsilon}{2})\sin \theta)$ in the angle interval $\d\theta$. By the strong Markov property at $\tau$,
 $$
\Pa_{x(a)}[T_{C(\varepsilon)} \le t<T_{B(\varepsilon)}]\hskip 10.5cm
$$
$$
\le \int_{(-\pi,\pi)}\d\theta\int_{(0,t)}\d\tau V(x(a),1-\frac{\varepsilon}{2},\tau,\theta)\Pa_{((1-\frac{\varepsilon}{2})\cos \theta,(1-\frac{\varepsilon}{2})\sin \theta)}[t-\tau\le T_{B(\varepsilon)}].
$$
Integrating  \eqref{g30} with respect to $t$ over $[0,\infty)$ gives that
$$
w_{B(\varepsilon)}(x(a))\le w_{C(\varepsilon)}(x(a))\hskip 9cm$$
$$\hskip 2cm+\int_{(-\pi,\pi)}\d\theta\int_{[0,\infty)}\d \tau V(x(a),1-\frac{\varepsilon}{2},\tau,\theta)w_{B(\varepsilon)}((1-\frac{\varepsilon}{2})\cos \theta,(1-\frac{\varepsilon}{2})\sin \theta).
$$Since $\int_{[0,\infty)}\d \tau V(x(a),1-\frac{\varepsilon}{2},\tau,\theta)$ is the density of the harmonic measure of a disc with radius $1-\frac{\varepsilon}{2}$  at the point $x(a)$ in $\d\theta$ \cite[Chapter 1]{GM} we obtain that

$$
w_{B(\varepsilon)}(x(a))\le w_{C(\varepsilon)}(x(a))\nonumber\hskip 11cm$$
$$+\frac{1}{2\pi}\int_{(-\pi,\pi)}\d\theta \frac{(1-\frac{\varepsilon}{2})^2-|x(a)|^2}{(1-\frac{\varepsilon}{2})^2-2(1-\frac{\varepsilon}{2})|x(a)|\cos \theta+|x(a)|^2}w_{B(\varepsilon)}((1-\frac{\varepsilon}{2})\cos \theta,(1-\frac{\varepsilon}{2})\sin \theta).
$$
 Multiplying  both sides by $a^{-1}$ and taking the limit $a\downarrow 0$ gives that (recall that  $w_{B(\varepsilon)}(x(0))=w_{C(\varepsilon)}(x(0))=0$)
\begin{align}\label{g34}
|\nabla w_{B(\varepsilon)}|(1-\frac{\varepsilon}{2},0)&\le |\nabla w_{C(\varepsilon)}|(1-\frac{\varepsilon}{2},0)+\frac{1}{2\pi}\int_{(-\pi,\pi)}\d\theta\frac{w_{B(\varepsilon)}((1-\frac{\varepsilon}{2})\cos \theta,(1-\frac{\varepsilon}{2})\sin \theta)}{(1-\frac{\varepsilon}{2})(1-\cos \theta)}\nonumber\\&
=\frac12\Big(1-\frac{\varepsilon}{2}\Big)+\frac{1}{2\pi}\int_{(-\pi,\pi)}\d\theta\frac{w_{B(\varepsilon)}((1-\frac{\varepsilon}{2})\cos \theta,(1-\frac{\varepsilon}{2})\sin \theta)}{(1-\frac{\varepsilon}{2})(1-\cos \theta)}.
\end{align}
The coordinates of the straight line segment in $\partial B(\varepsilon)$ are denoted by
\begin{equation}\label{g35}
S_1=(1-\frac{\varepsilon}{2},(2\varepsilon-\varepsilon^2)^{1/2}),\,\,S_2=(1-\frac{\varepsilon}{2},-(2\varepsilon-\varepsilon^2)^{1/2})
\end{equation}
respectively. We put $O=(0,0)$ and $2\theta_{\varepsilon}=\angle S_1 OS_2$ so that
\begin{equation}\label{g36}
\sin \theta_\varepsilon=\Big(\frac{4\varepsilon}{3\varepsilon +2}\Big)^{1/2},\,\,\cos\theta_\varepsilon=\Big(\frac{2-\varepsilon}{2+3\varepsilon}\Big)^{1/2}.
\end{equation}
We put
\begin{equation}\label{g37}
g(\varepsilon)=|\nabla w_{B(\varepsilon)}|(1-\frac{\varepsilon}{2},0).
\end{equation}
For any point on the straight line segment  $P_{\theta}=(1-\frac{\varepsilon}{2})(1,\tan\theta)$,\,$\theta\le \theta_\varepsilon,$ we have that
\begin{equation}\label{g38}
|\nabla w_{B(\varepsilon)}|(P_{\theta})\le g(\varepsilon).
\end{equation}
The distance from a point $((1-\frac{\varepsilon}{2})\cos \theta),(1-\frac{\varepsilon}{2})\sin \theta)$ on $\partial C(\varepsilon),
$ to the line segment $[S_1,S_2]$ is given by $(1-\frac{\varepsilon}{2})(1-\cos \theta)$.
Hence
\begin{equation}\label{g39}
w_{B(\varepsilon)}((1-\frac{\varepsilon}{2})\cos \theta),(1-\frac{\varepsilon}{2})\sin \theta)\le (1-\frac{\varepsilon}{2})(1-\cos \theta)g(\varepsilon),\,0<\theta\le \theta_{\varepsilon}.
\end{equation}
For points $\theta\in [\theta_{\varepsilon},\pi]\cup[-\theta_{\varepsilon},-\pi],$ we use monotonicity to obtain
\begin{align}\label{g40}
w_{B(\varepsilon)}((1-\frac{\varepsilon}{2})\cos \theta),(1-\frac{\varepsilon}{2})\sin \theta)&\le w_{B_{0}}((1-\frac{\varepsilon}{2})\cos \theta),(1-\frac{\varepsilon}{2})\sin\theta)\nonumber\\&
=\frac14 \varepsilon (1-\frac{\varepsilon}{2})(1+\cos \theta).
\end{align}
By \eqref{g34}--\eqref{g40},
\begin{align}\label{g41}
g(\varepsilon)&\le \frac12\Big(1-\frac{\varepsilon}{2}\Big)+\frac{g(\varepsilon)}{\pi}\int_{(0,\theta_{\varepsilon})}\d \theta+\frac{\varepsilon}{4\pi}\int_{[\theta_{\varepsilon},\pi]}\d \theta\frac{1+\cos \theta}{1-\cos \theta}\nonumber\\&
=\frac12\Big(1-\varepsilon\Big)+\frac{1}{\pi}\Big(g(\varepsilon)+\frac{\varepsilon}{4}\Big)\theta_{\varepsilon}+\frac{\varepsilon}{2\pi}\frac{1+\cos \theta_{\varepsilon}}{\sin \theta_{\varepsilon}},
\end{align}
where we have used
$$\int \d \theta\frac{1+\cos\theta}{1-\cos\theta}=-\theta-\frac{2(1+\cos \theta)}{\sin\theta}. $$
Rearranging \eqref{g41} and using \eqref{g36} yields
 $$
g(\varepsilon)\Big(1-\frac{1}{\pi}\arcsin\Big(\Big(\frac{4\varepsilon}{3\varepsilon+2}\Big)^{1/2}\Big)\Big)\hskip 6cm$$
$$\hskip 2cm\le \frac12(1-\varepsilon)+\frac{\varepsilon}{4\pi}\arcsin\Big(\Big(\frac{4\varepsilon}{3\varepsilon+2}\Big)^{1/2}\Big)+ \frac{\varepsilon^{1/2}}{4\pi}
\Big((2+3\varepsilon)^{1/2}+(2-\varepsilon)^{1/2}\Big).
$$
The coefficient of $g(\varepsilon)$ in the left-hand side above  is strictly positive since  $0\le \varepsilon\le 1$. It follows that, $\forall  0\le \varepsilon\le 1,$
$$
g(\varepsilon)\le \frac{\frac12(1-\varepsilon)+\frac{\varepsilon}{4\pi}\arcsin\Big(\Big(\frac{4\varepsilon}{3\varepsilon+2}\Big)^{1/2}\Big)+ \frac{\varepsilon^{1/2}}{4\pi}
\big((2+3\varepsilon)^{1/2}+(2-\varepsilon)^{1/2}\big)}{1-\frac{1}{\pi}\arcsin\Big(\Big(\frac{4\varepsilon}{3\varepsilon+2}\Big)^{1/2}\Big)}.
 $$
The right-hand side achieves its maximum value for some $\varepsilon\in(0.3240,0.3241)$. That maximum is bounded from above by $0.6404$ (computations performed with Wolfram Alpha \cite{WA}).

\hfill$\square$

\begin{remark} \rm
 Let  $m=G(B_1)$, and let $c$ be the value of the maximum in \eqref{d43} with $G$ either volume, perimeter or diameter and $\Om$ a maximiser of the corresponding problem \eqref{d43}.
Then
$$
c\ge \frac1d\Big(1+\frac{K_d^2}{4}\Big)^{1/2}.
$$
Indeed, the value of $c$ is not smaller than the one corresponding to the maximiser of \eqref{e27p} with $m=1$.
\end{remark}

\hfill$\square$

\smallskip
\noindent {\bf Torsion constraint.}
\begin{theorem}\label{torsion}
The variational expression
\begin{equation}\label{c1}
\sup\{\|\nabla w_{\Om}\|_{\infty}: \Om\, \textup{open, convex in}\, \R^d, \|w_{\Om}\|_1=1\}
\end{equation}
has a maximiser. If $\Om^*$ is a solution, then it is an inward maximiser for the maximum stress, and
\begin{equation}\label{c2}
diam(\Om^*)\le \frac{4\omega_d^{(d+1)/(d+2)}d^{d+4+\frac{1}{d+2}}(d+1)^{(d+1)/2}(d+2)^{2+\frac{1}{d+2}}}{\omega_{d-1}}.
\end{equation}
\end{theorem}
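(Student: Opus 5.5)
The proof has three parts: existence by the direct method, the inward-maximiser property from Proposition \ref{d120}, and the diameter bound by playing the torsion constraint against lower bounds on $\|\nabla w_\Om\|_\infty$ and $\|w_\Om\|_1$.

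\textbf{Inward maximiser.} The torsional rigidity $\PP(\Om)=\|w_\Om\|_1$ is nondecreasing under inclusion, by the maximum principle. It is positively homogeneous, since $\PP(t\Om)=t^{d+2}\PP(\Om)$. Hence Proposition \ref{d120} shows that any maximiser $\Om^*$ is an inward maximiser, once existence is known.

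\textbf{Existence.} Take a maximising sequence $(\Om_n)$ with $\PP(\Om_n)=1$. The supremum $c$ is positive, since any ball rescaled to unit torsion is admissible. I first show that the diameters of the $\Om_n$ are uniformly bounded; this is the same estimate as \eqref{c2}, so I prove it for an arbitrary admissible $\Om$ with $\|\nabla w_\Om\|_\infty\ge c/2$.

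\begin{itemize}
\item By \eqref{d00}, $\|w_\Om\|_\infty\ge \|\nabla w_\Om\|_\infty^2/2\ge c^2/8$.
\item By the concavity of $w_\Om^{1/2}$, $w_\Om\ge \tfrac14\|w_\Om\|_\infty$ on the set $\tfrac12(\Om+x_{\max})$, where $x_{\max}$ is a maximum point. This set has volume $2^{-d}|\Om|$, so
$$1=\|w_\Om\|_1\ge 2^{-d-2}\|w_\Om\|_\infty|\Om|.$$
\item A convex set with diameter $D$ and inradius $\rho$ has $|\Om|\ge C_d\,\omega_{d-1}\rho^{d-1}D/d$, using the cone over a $(d-1)$-ball of radius about $\rho$ with height about $D$.
\item The inradius controls the sup norm: $\|w_\Om\|_\infty\le C\rho^2$. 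This follows from the strip barrier, since a convex set of inradius $\rho$ lies in a slab of width at most $C_d\rho$ (Steinhagen).
\end{itemize}

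Combining these gives $\rho\gtrsim c$, $|\Om|\lesssim c^{-2}$, and then $D\lesssim |\Om|\rho^{-(d-1)}\lesssim c^{-2}c^{-(d-1)}$. So the diameters are uniformly bounded along the maximising sequence.

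After translation the $\Om_n$ lie in a fixed ball. By Blaschke selection they converge in the Hausdorff sense, and in $H^c$, to a convex $\Om$. The limit $\Om$ is nonempty because the inradii are bounded below. $\gamma$-convergence gives $\PP(\Om_n)\to\PP(\Om)=1$, and Theorem \ref{th_stability_maxstress} gives $\|\nabla w_{\Om_n}\|_\infty\to\|\nabla w_\Om\|_\infty=c$. Hence $\Om$ is a maximiser.

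\textbf{Explicit constant in \eqref{c2}.} Track the constants in the chain above, applied to $\Om^*$ itself.

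\begin{itemize}
\item For the lower bound on $c$, use the ball: with $\PP(B_r)=\omega_d r^{d+2}/(d(d+2))=1$, one gets $c\ge r/d$ with $r=(d(d+2)/\omega_d)^{1/(d+2)}$. This is the source of the exponents $1/(d+2)$ and $(d+1)/(d+2)$ on $\omega_d$, $d$ and $d+2$.
\item Combine $\|\nabla w_{\Om^*}\|_\infty^2\le 2\|w_{\Om^*}\|_\infty$ with the power-concavity volume estimate.
\item For the cone bound, use a John-ellipsoid type argument; this produces the $(d+1)^{(d+1)/2}$ and $\omega_{d-1}$ factors.
\end{itemize}

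The main obstacle is getting exactly the stated numerical constant. The qualitative bound is robust, but the precise powers depend on which inradius–width, power-concavity and cone inequalities are chosen. I would first try the route above and adjust the intermediate inequalities until the exponents match.
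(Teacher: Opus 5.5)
Your inward-maximiser argument (Proposition \ref{d120} with $G=\PP$) is correct. So is your existence scheme: a strip bound, equivalently Payne's $\|\nabla w_\Om\|_\infty\le h(\Om)/2$; Steinhagen to bound the inradius $r$ from below; an upper bound on $|\Om|$; the cone bound on the diameter; then Blaschke selection and Theorem \ref{th_stability_maxstress}. This is the paper's skeleton. The only difference is the volume step: you use the $\frac12$-concavity of $w_\Om$, while the paper uses $\|w_\Om\|_1\ge |\Om|r(\Om)^2/(d(d+2))$.

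One small omission: you work with $c/2$ without first showing $c<\infty$. The paper does this separately, via Hersch and van den Berg--Carroll. Your own ingredients give it in one line. The bounds $\omega_d r^d\le|\Om|\le 2^{d+2}/\|w_\Om\|_\infty$ and $\|w_\Om\|_\infty\le h^2/8\le (d+1)r^2/2$ bound $\|w_\Om\|_\infty$ by a dimensional constant, hence also $\|\nabla w_\Om\|_\infty$ by \eqref{d00}. You should say so.

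The genuine gap is \eqref{c2}. It is part of the statement, and you do not prove it; ``adjusting the inequalities until the exponents match'' is not an argument. Moreover, your chain with your volume bound does not give \eqref{c2} in all dimensions. Track it with the following:
\begin{itemize}
\item $\|\nabla w_{\Om^*}\|_\infty\ge\rho_d/d$, where $\rho_d^{d+2}=d(d+2)/\omega_d$;
\item $h\le 2\sqrt{d+1}\,r$, so $r\ge\rho_d/(d\sqrt{d+1})$;
\item your bound $|\Om|\le 2^{d+2}/\|w_\Om\|_\infty\le 2^{d+3}d^2\rho_d^{-2}$;
\item $D\le 2d|\Om|/(\omega_{d-1}r^{d-1})$.
\end{itemize}
This gives $D\le 2^{d+4}d^{d+2}(d+1)^{(d-1)/2}/(\omega_{d-1}\rho_d^{d+1})$. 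That is the right-hand side of \eqref{c2} multiplied by $2^{d+2}/\bigl(d^3(d+1)(d+2)^3\bigr)$, a factor exceeding $1$ for every $d\ge 34$. The crude half-homothety loses an exponential factor $2^{d+2}$, which the polynomial slack in \eqref{c2} cannot absorb.

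Also, no John ellipsoid is involved. The cone bound only uses the inball together with a point of $\ov\Om$ at distance $\ge D/2$ from its centre. The factor $(d+1)^{(d+1)/2}$ comes from Steinhagen: $(d+1)^{(d-1)/2}$ from $r^{-(d-1)}$ in the cone step, and one more $(d+1)$ from $r^{-2}$ in the volume step.

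To close the gap, replace your volume bound by one of two alternatives.
\begin{itemize}
\item The paper's inequality $\|w_\Om\|_1\ge|\Om|r^2/(d(d+2))$, which gives $|\Om|\le d^3(d+1)(d+2)\rho_d^{-2}$.
\item The layer-cake version of your own argument. Since $\{w_\Om\ge(1-s)^2\|w_\Om\|_\infty\}\supseteq(1-s)x_{\max}+s\Om$, integrating in $s$ gives $\|w_\Om\|_1\ge 2\|w_\Om\|_\infty|\Om|/((d+1)(d+2))$, hence the even better $|\Om|\le d^2(d+1)(d+2)\rho_d^{-2}$.
\end{itemize}
With the first choice, the same chain yields $D\le 2d^{d+3}(d+1)^{(d+1)/2}(d+2)/(\omega_{d-1}\rho_d^{d+1})$. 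Substituting $\rho_d$, this equals $2\omega_d^{(d+1)/(d+2)}d^{d+2+\frac{1}{d+2}}(d+1)^{(d+1)/2}(d+2)^{\frac{1}{d+2}}/\omega_{d-1}$. That is smaller than the right-hand side of \eqref{c2} by the factor $2d^2(d+2)^2$, which proves \eqref{c2}.
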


\begin{proof}
To show that the supremum in \eqref{c1} is finite it suffices to show that the scaling invariant quantity
\begin{equation*}
H(\Om):=\frac{\|\nabla w_{\Om}\|_{\infty}}{\big(\|w_{\Om}\|_1\big)^{1/(d+2)}}
\end{equation*}
is bounded from above. First observe that for a convex set $\Om\in\R^d$ with $\|w_{\Om}\|_1=1$ we have that $\textup{diam}(\Om)<\infty$. However, the sole requirements open, convex and $\|w_{\Om}\|_1=1$
are insufficient to obtain an upper bound for $\textup{diam}(\Om)$ as, in general, there is no lower bound for the inradius $r(\Om):=\sup\{d_{\Om}(x):x\in\Om\}$.
By the second inequality in \eqref{d00} we have for the bounded set $\Om$,
\begin{equation}\label{c4}
H(\Om)\le \frac{(2\|w_{\Om}\|_{\infty})^{1/2}}{\big(\|w_{\Om}\|_1\big)^{1/(d+2)}}.
\end{equation}
Let $d_{\Om}:\Om\rightarrow [0,\infty)$ denote the distance to the boundary function,
$$
 d_{\Om}(x)=\inf\{|x-y|: y\in \partial\Om\}.
$$
We have $B(x;d_{\Om}(x))\subset \Om$. By monotonicity of the torsion function, we have for $\Om_1\subset \Om,$
 $$
w_{\Om}(x)\ge w_{\Om_1}(x),\,\,x\in \Om_1.
 $$
Then $\Om$ contains a ball $B_{r_{\Om}/2}$ with radius $r(\Om)/2$.
By monotonicity of the torsional rigidity
\begin{equation}\label{c7}
\|w_{\Om}\|_1 \ge \|w_{B_{r_{\Om}/2}}\|_1= \frac{\omega_d}{d(d+2)}\Big(\frac{r(\Om)}{2}\Big)^{d+2}.
\end{equation}
By Hersch's inequality \cite{He60}, we have for an open, convex $\Om\subset \R^d$ with finite inradius,
\begin{equation}\label{c8}
\lambda(\Om)\ge \frac{\pi^2}{4r(\Om)^2}.
\end{equation}
By \eqref{c7} and \eqref{c8},
\begin{equation}\label{c9}
\|w_{\Om}\|_1\ge \frac{\omega_d}{d(d+2)}\Big(\frac{\pi}{4}\Big)^{d+2}\lambda(\Om)^{-(d+2)/2}.
\end{equation}
By \cite[Theorem 1]{vdBC},
\begin{equation}\label{c10}
\|w_{\Om}\|_{\infty}\le (4+3d\log 2)\lambda(\Om)^{-1}.
\end{equation}
By \eqref{c9}--\eqref{c10}
\begin{equation*}
\|w_{\Om}\|_{\infty}\le\frac{16(4+3d\log 2)}{\pi^2}\Big(\frac{d(d+2)}{\omega_d}\|w_{\Om}\|_1\Big)^{2/(d+2)}.
\end{equation*}
By \eqref{c4} and \eqref{c10},
\begin{equation}\label{c12}
H(\Om)\le \pi^{-1}(32(4+3d\log 2))^{1/2}\Big(\frac{d(d+2)}{\omega_d}\Big)^{1/(d+2)}.
\end{equation}
Put
\begin{equation}\label{c13}
c^*:=\sup\{\|\nabla w_{\Om}\|_{\infty}: \Om\, \textup{open, convex in}\, \R^d, \|w_{\Om}\|_1=1\}.
\end{equation}
By \eqref{c12}, $c^*<\infty$. If the supremum in \eqref{c13} is achieved for the ball, then there is nothing to prove. If not, then let $(\Om_n)$ be a maximising sequence.
Without loss of generality, we may assume that $(H(\Om_n))$ is increasing , $\lim_{n\rightarrow \infty}H(\Om_n)=c^*$ and
\begin{equation}\label{c14}
c^*> H(B_{\rho_d}),
\end{equation}
where $B_{\rho_d}$ is the ball with radius $\rho_d$ such that $\|w_{B_{\rho_d}}\|_1=1$.

Consider an element $\Om$ of the sequence $(\Om_n)$. By the second inequality in \eqref{d00} we have that $c^*\le \frac12 h(\Om)$. Hence  \eqref{c14} gives that
\begin{equation*}
\frac12 h(\Om)> \frac{\rho_d}{d}
\end{equation*}
where $h(\Om)$ is the width of $\Om$. By Steinhagen's inequality \cite{St},
\begin{equation}\label{c16}
h(\Om)\le \begin{cases}
2d^{1/2}r(\Om), & \text{if $d$ is odd }  \\
\frac{2(d+1)}{(d+2)^{1/2}}r(\Om), & \text{if $d$ is even}.
\end{cases}
\end{equation}
To simplify \eqref{c16} we just write $h(\Om)\le 2(d+1)^{1/2}r(\Om)$.
This gives
\begin{equation}\label{c17}
r(\Om)\ge \frac{\rho_d}{d(d+1)^{1/2}}.
\end{equation}
It has been shown, for example in \cite{PiGa14}, that
\begin{equation*}
\|w_{\Om}\|_1\ge \frac{|\Om|r(\Om)^2}{d(d+2)}.
\end{equation*}
This, together with \eqref{c17}, gives that
\begin{equation*}
|\Om|\le d^3(d+1)(d+2)\frac{\|w_{\Om}\|_1}{\rho_d^2}.
\end{equation*}
Since $\Om$ contains a ball with radius $r(\Om)$ and a line segment from the centre of that ball with length at least $\frac12\textup{diam}(\Om)$, $\Om$ contains a cone with a $(d-1)$-dimensional ball with radius $r(\Om)$, and height $\frac12\textup{diam}(\Om).$ So, we find that
\begin{equation}\label{c20}
diam(\Om)\le \frac{2d|\Om|}{\omega_{d-1}r(\Om)^{d-1}}.
\end{equation}
By \eqref{c17}--\eqref{c20},
\begin{equation*}
diam(\Om)\le \frac{2d^{d+2}(d+1)^{(d+1)/2}(d+2)\|w_{\Om}\|_1}{\omega_{d-1}\rho_d^{d+1}}.
\end{equation*}
Recall that $\|w_{\Om}\|_1=1$, and that $\|w_{B_{\rho_d}}\|_1=\frac{\omega_d \rho_d^{d+2}}{d(d+2)}=1$.
This gives that
\begin{align}\label{c22}
diam(\Om)&\le \frac{2\omega_d d^{d+3}(d+1)^{(d+1)/2}(d+2)^2}{\omega_{d-1}}\rho_d\nonumber \\&
\le \frac{2\omega_d d^{d+4}(d+1)^{(d+2)/2}(d+2)^2}{\omega_{d-1}}r(\Om),
\end{align}
where we used \eqref{c17} in the last inequality.
By \eqref{c7} and \eqref{c22},
\begin{equation}\label{c23}
diam(\Om)\le \frac{4\omega_d^{(d+1)/(d+2)}d^{d+4+\frac{1}{d+2}}(d+1)^{(d+1)/2}(d+2)^{2+\frac{1}{d+2}}}{\omega_{d-1}}.
\end{equation}
There exists a cuboid $C_d$ with sidelength depending upon $d$ only which contains suitable translates of $\Om_1,\Om_2,...$, denoted by $\Om_1,\Om_2,...$.
There exists a subsequence of $(\Om_n)$ again denoted by $(\Om_n)$  which converges in the complementary Hausdorff metric to an element say $\Om^*$.
By continuity of $\|\nabla w_{\Om}\|_{\infty}$ and $\|w_{\Om}\|_1$,  $\Om^*$ is a maximiser of \eqref{c1}. By  continuity of the diameter and \eqref{c23}, implies \eqref{c2}.
Note that the ball $B_{\rho_d}$ also satisfies \eqref{c2}.
\end{proof}

\smallskip
\noindent {\bf Eigenvalue constraint.}

For an open, non-empty, bounded and convex set in $\R^d,$ we put
\begin{equation}\label{d0}
J(\Om)=\|\nabla w_{\Om}\|_{\infty}\lambda(\Om)^{1/2}.
\end{equation}

\smallskip

\begin{theorem}\label{the2}
\begin{itemize}
\item[\textup{(i)}]The variational expression
\begin{equation}\label{d1}
\sup\Big\{J(E): E\subset \R^2 \textup{ non-empty, open, ellipse}\, \Big\}
\end{equation}
has a maximiser. If $E^*$ is a maximising ellipse, then
\begin{equation*}
J(E^*)>\frac{\pi}{2}.
\end{equation*}

\item[\textup{(ii)}] The variational expression
\begin{equation}\label{d3}
\sup\Big\{\|\nabla w_{\Om}\|_{\infty}: \Om\subset \R^2, \textup{non-empty, open, bounded and convex,}\, \lambda(\Om)=1 \Big\}
\end{equation}
has a  maximiser. If $\Om^*$ is a solution, then it is an inward maximiser for the maximum stress, and
\begin{equation}\label{d4}
diam(\Om^*)\le \max\Big\{\textup{diam}(E^{**}),3j_0\Big(1+2\Big(\Big(\frac{2J(E^*)}{\pi}\Big)^{2/3}-1\Big)^{-3}\Big),
\end{equation}
where $j_0$ is the first positive zero of the Bessel function $J_0$, and $E^{**}$ is a maximiser of \eqref{d1} with $\lambda(E^{**})=1$.
\end{itemize}
 \end{theorem}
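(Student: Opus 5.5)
The plan is to treat (i) by explicit computation on ellipses and (ii) by a compactness argument in which the value found in (i) serves as a threshold that rules out degenerating maximising sequences. Both $\|\nabla w_\Om\|_\infty$ (homogeneous of degree $1$) and $\lambda(\Om)^{1/2}$ (degree $-1$) scale in opposite ways. Hence $J$ is scale invariant, and the constraint $\lambda(\Om)=1$ in \eqref{d3} is equivalent to maximising $J$ over $\mathcal K_{\R^2}$.

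\emph{Part (i).} For $E=\{x^2/a^2+y^2/b^2<1\}$ with $b\le a$, the torsion function is explicit:
$$w_E=\frac{1-x^2/a^2-y^2/b^2}{2(a^{-2}+b^{-2})},\qquad \|\nabla w_E\|_\infty=\frac{a^2b}{a^2+b^2},$$
the maximum being attained at the ends of the minor axis (Theorem \ref{th_existence_failpoint}, item 4). So $J$ depends only on $s=b/a\in(0,1]$, and $s\mapsto J(E_s)$ is continuous by Theorem \ref{th_stability_maxstress} and continuity of $\lambda$. At $s=1$ one gets $J=j_0/2<\pi/2$. As $s\to0$, comparison with the strip of width $2b$ gives $\lambda(E_s)\ge \pi^2/(4b^2)$ and $\|\nabla w\|_\infty\le b$, hence $\limsup_{s\to0}J\le\pi/2$.

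To show that the supremum exceeds $\pi/2$, I will bound $\lambda(E_s)$ from above using the test function
$$u(x,y)=\varphi(x)\cos\Big(\frac{\pi y}{2b\sqrt{1-x^2/a^2}}\Big)$$
with a suitable cut-off $\varphi$. The aim is $\lambda(E_s)\le \frac{\pi^2}{4b^2}\big(1+\frac{c}{\pi}s+o(s)\big)$ with $c<2$; the asymptotics $\lambda\approx \pi^2/(4b^2)+\pi/(2ab)$ gives exactly this room. Combined with $\|\nabla w\|_\infty=b(1-s^2+O(s^4))$, this yields
$$J(E_s)\ge\frac{\pi}{2}\Big(1-\frac{c}{2\pi}s\Big)\cdot(\text{correction})>\frac{\pi}{2}$$
for small $s$. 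The sign of the first-order term is the delicate point. If the simple test function does not give a favourable sign, I will instead evaluate $J(E_s)$ at one explicit intermediate $s$, using a sharp numerical upper bound on $\lambda$. In either case, continuity on $(0,1]$ together with $\limsup_{s\to0}J\le\pi/2<\sup J$ gives a maximiser $E^*$.

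\emph{Part (ii).} By Proposition \ref{d120}, with $G=1/\lambda^{1/2}$ (nondecreasing under inclusion and positively homogeneous), every maximiser is an inward maximiser. For existence, take a maximising sequence $\Om_n$ with $\lambda(\Om_n)=1$. The supremum is at least $J(E^*)>\pi/2$, so we may assume $J(\Om_n)\ge\kappa:=\frac{\pi}{2}+\eta$ with $\eta>0$. The key step is a diameter bound, in two parts:
\begin{itemize}
\item[(a)] If $h$ is the width of $\Om$, the second inequality in \eqref{d00} and comparison with the slab give $\|\nabla w_\Om\|_\infty\le h/2$.
\item[(b)] If the diameter $D$ is large compared to $h$, then $\Om$ contains a long trapezoid or rectangle of width close to $h$. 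A separated-variables test function then gives $\lambda(\Om)\le \frac{\pi^2}{h^2}\big(1+C(h/D)^{\beta}\big)$.
\end{itemize}
Together these yield $J(\Om)\le\frac{\pi}{2}(1+C(h/D)^{\beta})^{1/2}$, so $J\ge\kappa$ forces $D/h$ to be bounded in terms of $(2J(E^*)/\pi)^{2/3}-1$. The normalisation $\lambda=1$ with Hersch's inequality \eqref{c8} bounds the inradius below, and a Faber--Krahn/monotonicity comparison with the disc of eigenvalue $1$ (radius $j_0$) bounds $h$ above by a multiple of $j_0$. This gives a uniform diameter bound of the form \eqref{d4}; the alternative $\mathrm{diam}(E^{**})$ in the maximum covers the case where the supremum is attained by an ellipse.

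Finally, translate the $\Om_n$ into a fixed box and extract a Hausdorff (equivalently $H^c$) convergent subsequence. The limit is nondegenerate because the inradius stays bounded below. Theorem \ref{th_stability_maxstress} and the continuity of $\lambda$ under $\gamma$-convergence then show that the limit is a maximiser, and it satisfies \eqref{d4}. The main obstacle is step (b): producing the quantitative upper bound on $\lambda$ for long thin convex sets, with constants explicit enough to produce the stated $3j_0(1+2(\cdot)^{-3})$.
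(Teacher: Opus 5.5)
\textbf{The gap is in part (i): your eigenvalue inequalities point the wrong way.} Since $\|\nabla w_{E_s}\|_\infty=b/(1+s^2)$ is explicit, the claim $J(E_s)>\pi/2$ is equivalent to the \emph{lower} bound $\lambda(E_s)>\frac{\pi^2}{4b^2}(1+s^2)^2$.

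\begin{itemize}
\item The strict inequality. A Rayleigh quotient of your test function $u$ can only give $\lambda(E_s)\le\cdots$, hence only upper bounds on $J(E_s)$. Your line $J(E_s)\ge\frac{\pi}{2}(1-\frac{c}{2\pi}s)\cdot(\ldots)>\frac{\pi}{2}$ therefore does not follow. With that minus sign it would lie below $\pi/2$ anyway. Your fallback, a ``sharp numerical upper bound on $\lambda$'', has the same defect.
\item The limit $s\to0$. This step is reversed too. The strip comparison $\lambda(E_s)\ge\pi^2/(4b^2)$ only gives $J(E_s)\ge\frac{\pi}{2(1+s^2)}$. To get $\limsup_{s\to0}J(E_s)\le\pi/2$ you need an upper bound on $\lambda$, which is exactly what your test function (or an inscribed rectangle) supplies.
\end{itemize}

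So the two estimates must swap roles. What is genuinely missing is a lower bound on $\lambda(E_s)$ with a positive correction of order $s$ beyond the strip value. The paper takes this from the two-term asymptotics $\lambda(E_{1,n})=\frac{\pi^2}{4}+\frac{\pi}{2n}+o(n^{-1})$ of \cite{BF10}.

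A self-contained substitute works as follows.
\begin{itemize}
\item The one-dimensional Poincar\'e inequality on each vertical chord, of half-length $b(1-x^2/a^2)^{1/2}$, gives $\int_E|\partial_y u|^2\ge\frac{\pi^2}{4b^2}\int_E(1+x^2/a^2)u^2$.
\item The harmonic-oscillator inequality $\int_\R\big((f')^2+\omega^2x^2f^2\big)\ge\omega\int_\R f^2$ on each horizontal chord, with $\omega=\frac{\pi}{2ab}$, then yields $\lambda(E_s)\ge\frac{\pi^2}{4b^2}+\frac{\pi}{2ab}$.
\item Hence $J(E_s)\ge\frac{\pi}{2}\,\frac{(1+2s/\pi)^{1/2}}{1+s^2}>\frac{\pi}{2}$ for small $s>0$, for instance $s=1/10$.
\end{itemize}

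\textbf{Consequences for (ii).} The strict inequality $J(E^*)>\pi/2$ is what gives you a threshold $\kappa>\pi/2$ for the maximising sequence. It is also what makes $(2J(E^*)/\pi)^{2/3}-1>0$ in \eqref{d4}. Without it, neither existence in (i) nor the diameter bound in (ii) is established.

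Once this is repaired, your part (ii) follows the paper's route:
\begin{itemize}
\item Payne's bound $\|\nabla w_\Om\|_\infty\le h/2$;
\item the upper bound $\lambda(\Om)\le\frac{\pi^2}{h^2}\big(1+(2h/|\Om'|)^{2/3}\big)^3$ from \cite{vdB2}, which your inscribed triangle/rectangle argument with $\beta=2/3$ reproduces;
\item compactness.
\end{itemize}
One small correction there. The bound $h\le 3j_0$ comes from Steinhagen's inequality $h\le 3r$ combined with $r\le j_0$, which follows from monotonicity applied to the inscribed disc. It does not come from Faber--Krahn. Hersch's inequality only gives $r\ge\pi/2$, which is what you use for non-degeneracy of the limit.
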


\noindent{\it Proof.}(i)
 Consider the ellipse $E_{1,n}$ with axes $1$ and $n,\,n\ge 1.$ It was shown in \cite[formula (1.3)]{BF10} that
$$
\lb(E_{1,n})=\frac{\pi^2}{4}+\frac{\pi}{2n}+o(n^{-1}), \, n\rightarrow \infty.
$$
Since
\begin{equation*}
w_{E_{1,n}}(x_1,x_2)=\frac{1-x_1^2-\frac{x_2^2}{n^2}}{2(1+n^{-2})},
\end{equation*}
we find that
\begin{equation*}
\|\nabla w_{E_{1,n}}\|_{\infty}=\frac{n^2}{1+n^2}.
\end{equation*}
Hence
\begin{equation*}
\|\nabla w_{E_{1,n}}\|_{\infty}\lb (E_{1,n})^{1/2}=\frac{\pi}{2}+\frac{1}{2n}+o(n^{-1}),  \, n\rightarrow \infty.
\end{equation*}
This is for all $n$ sufficiently large, strictly larger than the value $\frac{\pi}{2}.$
There exists a maximiser by continuity and boundedness of $n \rightarrow J(E_{1,n})$ on $[1,\infty)$.

\smallskip

(ii) We first show that the supremum in \eqref{d3} is finite. It suffices to prove that the scaling invariant quantity $J(\Om)$ is bounded. By Payne's inequality in \eqref{d00}
\begin{equation}\label{d5}
\|\nabla w_{\Om}\|_{\infty}\le \frac{h(\Om)}{2}.
\end{equation}
Since $\Om$ contains a disc with radius $r(\Om),$ we have by monotonicity of the Dirichlet spectrum that
\begin{equation}\label{d6}
\lambda(\Om)\le \frac{j_0^2}{r(\Om)^2}.
\end{equation}
By \eqref{c16} for $d=2$, and \eqref{d5}--\eqref{d6},
\begin{equation}\label{d6_bis}
J(\Om)\le \frac{3j_0}{2}.
\end{equation}
Denote the supremum in \eqref{d3} by $c^*$. Let $E^{**}$ be a maximiser of \eqref{d1} with $\lambda(E^{**})=1$. There are two cases. Either $E^{**}$ is a maximiser of \eqref{d3} or
$c^*>J(E^{**})$. In the first case  $E^{**}$ trivially satisfies \eqref{d4}. In the second case we let $(\Om_n)$ be a maximising sequence. Without loss of generality, we may assume that
$n\rightarrow J(\Om_n)$ is increasing and that $J(\Om_n)>J(E^*)$. Consider an arbitrary element $\Om$ in this sequence.

\smallskip

For a non-empty, open, bounded and convex set in $\R^2$, it can be shown that there exist $z_1,z_2\in \partial \Om$ which support two parallel lines defining $h(\Om)=|z_1-z_2|$, and such that the closed line segment $[z_1,z_2]$ is perpendicular to these lines.  This follows from the proof of \cite[Theorem 1.5]{vdB2}. The two supporting lines are given by $x_2=\pm \frac12h(\Om)$. It was shown in \cite[p.597]{vdB2} that if $\Om'=\{(x_1,0): (x_1,x_2)\in \Om \}$  denotes the projection of $\Om$ onto the line $x_2=0$, and if $\Lambda(\Om')$ denotes the bottom of the spectrum of the $1$-dimensional Dirichlet Laplacian acting in $L^2(\Om')$, then
\begin{equation}\label{h31}
\lambda(\Om)\le \frac{\pi^2}{h(\Om)^2}\left(1+c(\Om)\right)^3,
\end{equation}
where
\begin{equation}\label{h32}
c(\Om):=\Big(\frac{4h(\Om)^2\Lambda(\Om')}{\pi^2}\Big)^{1/3}.
\end{equation}
By \eqref{d0}, \eqref{d5}, \eqref{h31} and \eqref{h32},
$$J(\Om)\le \frac{\pi}{2}\Big(1+\Big(\frac{4h(\Om)^2\Lambda(\Om')}{\pi^2}\Big)^{1/3}\Big)^{3/2}.$$
Since $J(\Om_n)>J(E^*),$ we obtain that
\begin{equation*}
J(E^*)\le \frac{\pi}{2}\Big(1+\Big(\frac{4h(\Om)^2\Lambda(\Om')}{\pi^2}\Big)^{1/3}\Big)^{3/2}.
\end{equation*}
Since $\Lambda(\Om')=\frac{\pi^2}{|\Om'|^2}$, where $|\Om'|$ is the length of the line segment of the projection on $x_2=0,$ we find that
\begin{equation}\label{d8}
|\Om'|\le 2h(\Om)\Big(\Big(\frac{2J(E^*)}{\pi}\Big)^{2/3}-1\Big)^{-3}.
\end{equation}
Let $d_{1}$ and $d_{2}$ be points which realise the diameter of $\Om$, and let $\tilde{d}_{1}, \tilde{d}_{2}$ be their projections onto $x_2=0$. Since $\Omega\subset \{-\frac12 h(\Om) \leq x_2\leq \frac12 h(\Om)\}$ we have that,
$|d_{1}-\tilde{d}_{1}|\le \frac12h(\Om),\,|d_{2}-\tilde{d}_{2}|\le \frac12h(\Om)$. Hence by \eqref{d8}
\begin{align}\label{h40}
\textup{diam} (\Om)&=|d_{1}-d_{2}|\nonumber\\&\le |\tilde{d}_{1}-\tilde{d}_{2}|+h(\Om)\le |\Om'|+h(\Om)\nonumber\\&
\le h(\Om)\Big(1+2\Big(\Big(\frac{2J(E^*)}{\pi}\Big)^{2/3}-1\Big)^{-3}\Big).
\end{align}
By \eqref{c16} for $d=2$ and \eqref{d6_bis}, we have that
$h(\Om)\le 3r(\Om)\le 3j_0\lambda(\Om)^{-1/2}$. This gives with \eqref{h40}
\begin{equation*}
\textup{diam}(\Om)
\le 3j_0\lambda(\Om)^{-1/2}\Big(1+2\Big(\Big(\frac{2J(E^*)}{\pi}\Big)^{2/3}-1\Big)^{-3}\Big).
\end{equation*}
For each element of the minimising sequence we have $\lambda(\Om)=1$. This shows that the diameters of the elements $\Om_n$ are uniformly bounded.
There exists a subsequence of translates of $\Om_n$ which converges in the complementary Hausdorff metric to say $\Om^*$. By continuity, $\Om^*$ is a minimiser and its diameter satisfies \eqref{d4}.

\hfill$\square$
\begin{remark}\rm
The equilateral triangle is not a maximiser since, by \cite{P},
$$\|\nabla w_{T_1}\|_{\infty}\lb(T_1)^{1/2}=\frac{\pi}{2}<J(E^*).$$
 \end{remark}

In Theorem \ref{the5}  we give, under hypothesis H below, a probabilistic alternative  to a particular case of  inequality \eqref{d102_bis} contained in  the proof of  Theorem \ref{th_inward_max} and Proposition \ref{m01}\EEE.  We consider a non empty, open, bounded and convex set $\Om$. Without loss of generality we may assume that $\Om\subset \R^{d-1}\times \R^+$, and that $(0',0)\in\partial \Om $ is a failpoint of $\Om$. For $\delta>0$ we let
$w_{\delta}$ be the torsion function for the set  $\Om_{\delta}=\Om\,\cap \{x_d>\delta\},$ and  $w_{\Om}=w.$  Then
$$
\nabla w_{\delta}(0',\delta)=\Big(0',\frac{\partial w_{\delta}}{\partial x_d}(0',\delta)\Big).
$$
We make the following hypothesis
$$\max_{x\in \Om\cap\{ x_d=\delta\}}w_{\Om}(x)=w_{\Om}(0',\delta),\,\delta>0.\,\,\textup{(H)}$$

\smallskip

\begin{theorem}\label{the5}

If $\Om$ satisfies (H), then
$$
\frac{\partial w_{\delta}}{\partial x_d}(0',\delta)\ge \frac{\partial w}{\partial x_d}(0',\delta) +w(0',\delta)\frac{\Gamma(d/2)}{\pi^{d/2}}\int_{{  \{x_d=\delta\}\setminus\Om}}\frac{\d x'}{|x'|^d},\,\delta>0.
$$

\end{theorem}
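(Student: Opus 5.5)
The approach is a comparison of $w_\delta$ with $w$ on the slab $\{x_d>\delta\}$ through the harmonic difference $h:=w-w_\delta$. Since $-\Delta w=-\Delta w_\delta=1$ in $\Om_\delta$, the function $h$ is harmonic in $\Om_\delta$. It vanishes on $\partial\Om\cap\{x_d>\delta\}$ and equals $w$ on $\Om\cap\{x_d=\delta\}$. By hypothesis (H), $0\le h\le w(0',\delta)$ on that flat piece. I would also extend $h$ by $0$ to $\{x_d>\delta\}\setminus\Om$; there $w=0$ and $w_\delta=0$ with our convention.

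The comparison function is the half-space harmonic extension. Let $g$ be the bounded harmonic function in $\{x_d>\delta\}$ with boundary values $1_{\Om\cap\{x_d=\delta\}}$, given by the Poisson kernel. Then $w(0',\delta)g-h$ is harmonic in $\Om_\delta$ and nonnegative on $\partial\Om_\delta$: on the flat part this is (H), and on the curved part $h=0\le g$. By the maximum principle, $h\le w(0',\delta)\,g$ in $\Om_\delta$. At the boundary point $(0',\delta)$ both sides equal $w(0',\delta)$, since $g=1$ there because $(0',\delta)$ lies in the interior of the relative flat section. Differentiating in the inward direction $x_d$ at that point gives
$$\frac{\partial w}{\partial x_d}(0',\delta)-\frac{\partial w_\delta}{\partial x_d}(0',\delta)\le w(0',\delta)\,\frac{\partial g}{\partial x_d}(0',\delta).$$
Equivalently, one can use the probabilistic form: $w(x)-w_\delta(x)=\E_x[w(B(T_{\Om_\delta}))]$, and (H) bounds this by $w(0',\delta)\Pa_x[B(T_{\Om_\delta})\in\{x_d=\delta\}]$, which in turn is at most $w(0',\delta)\Pa_x[\text{exit of the half-space through }\Om]$. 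This is the alternative that the text announces.

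Two points need justification. First, $w_\delta$ and $w$ are differentiable at $(0',\delta)$ in the normal direction. For $w$ this holds by interior smoothness. For $w_\delta$ it follows from Lemma \ref{lem_boundary_differential}, since $(0',\delta)$ lies on the flat face of $\Om_\delta$ and the nontangential limit there is $\partial_{x_d}w_\delta$. Second, the derivative of $g$ must be computed exactly as in \eqref{dba11}:
$$-\frac{\partial g}{\partial x_d}(0',\delta)=\frac{2}{d\om_d}\int_{\{x_d=\delta\}\setminus\Om}\frac{\d x'}{|x'|^d}.$$
This comes from writing $(g(0',\delta)-g(0',\delta+t))/t$ as the Poisson integral over the complement and letting $t\to0$ by monotone convergence. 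Since $\frac{2}{d\om_d}=\frac{\Gamma(d/2)}{\pi^{d/2}}$, rearranging yields the claimed inequality.

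The main obstacle is the differentiation step. Both $h$ and $w(0',\delta)g$ coincide at a boundary point, and I must pass from the inequality $h\le w(0',\delta)g$ to an inequality between one-sided normal derivatives there. Taking difference quotients along $(0',\delta+t)$ handles this directly, since $h(0',\delta)=w(0',\delta)g(0',\delta)$. The only care needed is that the limits exist: for $g$ the limit may a priori be $-\infty$, but it is finite because the complement is at positive distance from $0'$. One also has to confirm that (H) gives the bound on the whole flat section, including near its relative boundary, and this is exactly what (H) states.
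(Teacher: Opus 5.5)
Your proof is correct, but it is not the route the paper takes for this statement. You argue deterministically. The function $h=w-w_\delta$ is harmonic in $\Om_\delta$. The maximum principle gives $h\le w(0',\delta)\,g$, using (H) on the flat section and $h\to0$, $g\ge 0$ on the rest of $\partial\Om_\delta$, including the relative boundary of the section where $g$ jumps. Equality holds at $(0',\delta)$, and one-sided difference quotients along $e_d$ together with the explicit limit of the half-space Poisson integral finish the argument.

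This is exactly the argument the paper uses earlier, in the proof of Theorem \ref{th_inward_max} (inequalities \eqref{dba10}--\eqref{dba11}) and in Proposition \ref{m01} (inequality \eqref{d102_bis}). The only change is that the slice maximiser $(\tilde x(\vps),\vps)$ is replaced by $(0',\delta)$, which is what (H) guarantees.

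The paper's proof of Theorem \ref{the5} is instead probabilistic, and is presented as an alternative to that argument. It proceeds as follows:
\begin{enumerate}
\item It splits $\Pa_{x(a)}[T_\Om>t]$ as $\Pa_{x(a)}[T_{\Om_\delta}>t]+\Pa_{x(a)}[T_{\Om_\delta}\le t<T_\Om]$.
\item It bounds the second term by $\Pa_{x(a)}[\tau_\delta\le t<T_\Om]$ and applies the strong Markov property at the hitting time $\tau_\delta$ of the hyperplane.
\item It inserts the explicit hitting density of $B_d$ and the Gaussian kernel of $B'$, and integrates in $t$ (Fubini) to recover the half-space Poisson kernel acting on $w(\cdot,\delta)$.
\item Only then does it use (H) and the normalisation of the Poisson kernel to reach \eqref{g12}, before dividing by $a$.
\end{enumerate}

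Your version is shorter and needs no densities or time integration. The paper's version makes explicit the intermediate bound of $w-w_\delta$ by the Poisson integral of the actual trace $w(\cdot,\delta)$, before (H) is used. You would obtain the same intermediate bound by comparing $h$ with the Poisson extension of $w(\cdot,\delta)1_{\Om\cap\{x_d=\delta\}}$ rather than with $w(0',\delta)g$.

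Your one-line probabilistic remark is a compact harmonic-measure form of the paper's time-integrated computation. It rests on $w-w_\delta=\E_x[w(B(T_{\Om_\delta}))]$ and on the inclusion of the event of exiting $\Om_\delta$ through the flat section in the event of first hitting the hyperplane inside $\Om$.

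Finally, existence of $\partial_{x_d}w_\delta(0',\delta)$ can be obtained more cheaply than via Lemma \ref{lem_boundary_differential}. Reflect $w_\delta+\frac12(x_d-\delta)^2$, which is harmonic and vanishes on the flat face, oddly across that face.
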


\begin{proof}
 Let $x(a)=(0',\delta+a)$ be a point in $\Om_{\delta},$ where $a$ is small and positive.
We have that
\begin{equation}\label{g4}
\Pa_{x(a)}[T_{\Om}>t]=\Pa_{x(a)}[T_{\Om_{\delta}}>t]+\Pa_{x(a)}[T_{\Om_{\delta}}\le t < T_{\Om}].
\end{equation}
We let $B=(B',B_d)$ where $B_d$ is a one-dimensional Brownian motion taking values in $\R$, and $B'$ is an independent $(d-1)$-dimensional Brownian motion taking values in $\R^{d-1}$.
Furthermore we let
\begin{equation*}
\tau_{\delta}=\inf\{s\ge 0: B_d=\delta\}.
\end{equation*}
We verify the following inclusion of events
\begin{equation}\label{g4b}
\{T_{\Om_{\delta}}\le t < T_{\Om}\}\subseteq\{\tau_{\delta}\le t<T_{\Om}\}.
\end{equation}
By \eqref{g4b}, and the strong Markov property at the stopping time $\tau_{\delta}$ we have that
\begin{align}\label{g4c}
\Pa_{x(a)}[T_{\Om_{\delta}}\le t<T_{\Om}]&\le \Pa[\tau_{\delta}\le t<T_{\Om}]\nonumber \\&
=\E_{x(a)}\Big\{\int_{(0,t)}1_{\{\tau_{\delta}\in \d \tau\}}\Pa_{(B'(\tau),\delta)}[t-\tau<T_{\Om}]\Big\}\nonumber\\&
=\int_{(0,t)}\E_{x(a)}\Big[ 1_{\{\tau_{\delta}\in \d \tau\}}\Pa_{(B'(\tau),\delta)}[t-\tau<T_{\Om}]\Big].
\end{align}
where ${\bf 1}$ denotes the indicator function.

By solving the heat equation \eqref{e7} for the half-line $H=(0,\infty),$
we find that
\begin{equation}\label{g5}
\Pa_{c}[T_H<t]=\frac{2}{\pi^{1/2}}\int_{(c/(4t)^{1/2},\infty)}\d\theta\, e^{-\theta^2},\,c>0.
\end{equation}
Let $\tau_c=\inf\{s:B_d(s)=c\}$.
The random variable $\tau_c$ has a density $\rho(\tau;c)$ given by differentiating \eqref{g5} with respect to $t$,
\begin{equation}\label{g6}
\rho(\tau;c)=\frac{c}{(4\pi \tau^3)^{1/2}}e^{-c^2/(4\tau)}{\bf 1}_{\{\tau\ge 0\}}.
\end{equation}

A further standard fact is that for any bounded measurable function $f:\R^{d-1}\rightarrow \R$, any $x'\in \R^{d-1}$, and any $\tau>0$,
\begin{equation}\label{g7}
\E_{x'}[f(B(\tau))]=\frac{1}{(4\pi \tau)^{(d-1)/2}}\int_ {\R^{d-1}}\d y'\,f(y') e^{-|x'-y'|^2/(4\tau)}.
\end{equation}

Since the distance of $x(a)$ to the line $x_d=\delta$ equals $a$ we find, with $c=a$ in \eqref{g5} that
\begin{equation}\label{g7a}
\E_{x(a)}\Big[ 1_{\{\tau_{\delta}\in \d \tau\}}\Big]=\rho(\tau;a)\d \tau.
\end{equation}
By \eqref{g7} applied to $f(x')=\Pa_{(x',\delta)}[t-\tau<T_{\Om}]$ we obtain that
\begin{equation}\label{g7b}
\E_{x(a)}\Big[\Pa_{(B'(\tau),\delta)}[t-\tau<T_{\Om}]\Big]=\frac{1}{(4\pi \tau)^{(d-1)/2}}\int_{\R^{d-1}} \d x'\, e^{-|x'|^2/(4\tau)}\Pa_{(x',\delta)}[t-\tau<T_{\Om}].
\end{equation}
By \eqref{g4c}, \eqref{g6}, \eqref{g7a}, \eqref{g7b} and the independence of $B_d$ and $B'$
\begin{align}\label{g8}
\Pa_{x(a)}[T_{\Om_{\delta}}\le t<T_{\Om}]&\le \int_{(0,t)}\d \tau \frac{a}{(4\pi)^{d/2}\tau^{(d+2)/2}}\int_{\R^{d-1}}e^{-(a^2+|x'|^2)/(4\tau)}{\bf 1}_{\{\tau\ge 0\}}\Pa_{(x',\delta)}[t-\tau<T_{\Om}]\Big]\nonumber\\&
=\int_{(0,\infty)}\d \tau \frac{a}{(4\pi)^{d/2}\tau^{(d+2)/2}}\int_{\R^{d-1}}e^{-(a^2+|x'|^2)/(4\tau)}{\bf 1}_{\{0\le \tau\le t\}}\Pa_{(x',\delta)}[t-\tau<T_{\Om}].
\end{align}
Integrating \eqref{g8} with respect to $t$ over $[0,\infty)$ yields by Fubini's theorem
\begin{align}\label{g9}
\int_{[0,\infty)} \d t&\Pa_{x(a)}[T_{\Om_{\delta}}\le t<T_{\Om}]\nonumber\\&\le
\int_{(0,\infty)}\d \tau \frac{a}{(4\pi)^{d/2}\tau^{(d+2)/2}}\int_{\R^{d-1}}\d x'e^{-(a^2+|x'|^2)/(4\tau)}\int_{[0,\infty)}\d t{\bf 1}_{\{0\le \tau\le t\}}\Pa_{(x',\delta)}[t-\tau<T_{\Om}]\nonumber\\&
=\int_{(0,\infty)}\d \tau \frac{a}{(4\pi)^{d/2}\tau^{(d+2)/2}}\int_{\R^{d-1}}\d x'e^{-(a^2+|x'|^2)/(4\tau)}\int_{[0,\infty)}\d t\Pa_{(x',\delta)}[t<T_{\Om}]\nonumber\\&
=\int_{(0,\infty)}\d \tau \frac{a}{(4\pi)^{d/2}\tau^{(d+2)/2}}\int_{\R^{d-1}}\d x'e^{-(a^2+|x'|^2)/(4\tau)}w(x',\delta)\nonumber\\&
=\int_{(0,\infty)}\d \tau \frac{a}{(4\pi)^{d/2}\tau^{(d+2)/2}}\int_{\{x':(x',\delta)\in\Om\}}\d x'e^{-(a^2+|x'|^2)/(4\tau)}w(x',\delta)\nonumber\\&
\le \int_{(0,\infty)}\d \tau \frac{a}{(4\pi)^{d/2}\tau^{(d+2)/2}}\int_{\{x':(x',\delta)\in\Om\}}\d x'e^{-(a^2+|x'|^2)/(4\tau)}w(0',\delta)\nonumber\\&
=\frac{a\Gamma(d/2)}{\pi^{d/2}}\int_{\Om\cap\{x_d=\delta\}}\frac{\d x'}{(a^2+|x'|^2)^{d/2}}w(0',\delta),
\end{align}
where we have used (H) in the last inequality. By \eqref{g9} and \cite[3.194.3]{GR},

\begin{align}\label{g11}
\int_{(0,\infty)}\d t\,\Pa_{x(a)}&[T_{\Om_{\delta}}\le t<T_{\Om}]\le
\frac{a\Gamma(d/2)}{\pi^{d/2}}\int_{\R^{d-1}}\frac{\d x'}{(a^2+|x'|^2)^{d/2}}w(0',\delta)\nonumber\\&
-\frac{a\Gamma(d/2)}{\pi^{d/2}}\int_{{  \{x_d=\delta\}\setminus\Om}}\frac{\d x'}{(a^2+|x'|^2)^{d/2}}w(0',\delta)\nonumber\\&
=w(0',\delta)-\frac{a\Gamma(d/2)}{\pi^{d/2}}w(0',\delta)\int_{{ \{x_d=\delta\}\setminus\Om}}\frac{\d x'}{(a^2+|x'|^2)^{d/2}}.
\end{align}
By \eqref{g4}, \eqref{g4c}, \eqref{g11}, and recalling that $w_{\delta}(0',\delta)=0$, we have
\begin{align}\label{g12}
w_{\delta}&(0',\delta+a)-w_{\delta}(0',\delta)\nonumber\\&\ge w(0',\delta+a)-w(0',\delta)+\frac{a\Gamma(d/2)}{\pi^{d/2}}\int_{{  \{x_d=\delta\}\setminus\Om}}\frac{\d x'}{(a^2+|x'|^2)^{d/2}}w_{\Om}(0',\delta).
\end{align}
Multiplying both sides of \eqref{g12} by $\frac1a$, and taking the limit $a\downarrow 0$ yields
\begin{align*}
\frac{\partial w_{\delta}}{\partial x_d}(\delta,0')- \frac{\partial w}{\partial x_d}(\delta,0')&\ge \lim_{a\downarrow 0}\frac{\Gamma(d/2)}{\pi^{d/2}}\int_{{  \{x_d=\delta\}\setminus\Om}}\frac{\d x'}{(a^2+|x'|^2)^{d/2}}\nonumber\\&
=w(0',\delta)\frac{\Gamma(d/2)}{\pi^{d/2}}\int_{{  \{x_d=\delta\}\setminus\Om}}\frac{\d x'}{|x'|^d}.
\end{align*}

\end{proof}

\smallskip
 \medskip

\noindent {\bf Acknowledgements.} The authors would like to thank the Isaac Newton Institute for Mathematical Sciences, Cambridge, for support and hospitality during the programme Geometric Spectral Theory and applications, where work on this paper was undertaken. This work was supported by EPSRC grant EP/Z000580/1. DB was partially supported by a grant from the Simons Foundation and by the ANR project STOIQUES financed by the French Agence Nationale de la Recherche (ANR-24-CE40-2216). MvdB wishes   to thank Katie Gittins   and Ilaria Lucardesi for helpful discussions. DB wishes to thank Roberto Ognibene for a useful suggestion to shorten the proof of  Lemma \ref{d04}. We wish to thank Cristina Trombetti for helpful discussions concerning Steiner symmetrisation.

\end{document}